\documentclass[11pt, final]{amsart}
\usepackage[margin=1.29in]{geometry}
\linespread{1.15} 
\usepackage{amsmath, amsxtra, amssymb, mathdots}
\usepackage{appendix}
\usepackage{verbatim}
\usepackage{multirow}

\usepackage[svgnames]{xcolor}
\usepackage{tikz}
\usepgflibrary{decorations.text}
\usetikzlibrary{decorations.text}
\pgfdeclarelayer{edgelayer}
\pgfdeclarelayer{nodelayer}
\pgfsetlayers{edgelayer,nodelayer,main}

\tikzstyle{none}=[inner sep=0pt]
\definecolor{hexcolor0xfefdfd}{rgb}{0.996,0.992,0.992}


\usepackage{graphicx}
\usepackage[cmtip,all]{xy}

\usepackage[notcite, notref]{showkeys}
\usepackage[colorlinks, linkcolor=blue, citecolor=red, urlcolor=black]{hyperref}

\theoremstyle{plain}
\newtheorem*{main-theorem}{Main Theorem}
\newtheorem{theorem}[equation]{Theorem}
\newtheorem{prop}[equation]{Proposition}
\newtheorem{corollary}[equation]{Corollary}

\newtheorem{claim}[equation]{Claim}
\newtheorem*{claim*}{Claim}

\newtheorem{lemma}[equation]{Lemma}

\theoremstyle{definition}
\newtheorem{definition}[equation]{Definition}

\newtheorem{remark}[equation]{Remark}

\numberwithin{equation}{section}

\DeclareMathOperator{\spec}{Spec}

\DeclareMathOperator{\Sym}{Sym}
\DeclareMathOperator{\proj}{Proj}
\DeclareMathOperator{\Stab}{Stab}

\DeclareMathOperator{\SL}{SL}
\DeclareMathOperator{\GL}{GL}
\DeclareMathOperator{\PGL}{PGL}

\DeclareMathOperator{\Grass}{Grass}

\def\cV{\mathcal{V}}
\def\cW{\mathcal{W}}

\def\cU{\mathcal{U}}
\def\cZ{\mathcal{Z}}

\def\irr{\mathrm{irr}}

\def\FF{\mathbb{F}}

\newcommand{\gitq}{/\hspace{-0.25pc}/}
\newcommand\Mg[1]{\overline{\mathcal{M}}_{#1}}
\newcommand\M{\overline{M}}

\def\cL{\mathcal{L}}
\def\cC{\mathcal{C}}
\def\cP{\mathcal{P}}

\def\QQ{\mathbb{Q}}

\def\PP{\mathbb{P}}

\def\CC{\mathbb{C}}
\def\GG{\mathbb{G}}

\def\HH{\mathrm{H}}
\def\bH{\mathbb{H}}
\def\bbH{\overline{\bH}}

\def\cO{\mathcal{O}}
\def\cM{\mathcal{M}}
\def\hra{\hookrightarrow}

\DeclareMathOperator{\Def}{Def}

\DeclareMathOperator{\Aut}{Aut}
\DeclareMathOperator{\Bl}{Bl}

\renewcommand{\bar}{\overline}

\address[Fedorchuk]{Department of Mathematics\\
Boston College\\
140 Commonwealth Ave\\
Chestnut Hill, MA 02467, USA}
\email{maksym.fedorchuk@bc.edu}

\address[Grimes]{Department of Mathematics\\
Boston College\\
140 Commonwealth Ave\\
Chestnut Hill, MA 02467, USA}
\email{matthew.grimes@bc.edu}

\begin{document}

\title{{VGIT presentation of the second flip of $\overline{M}_{2,1}$}}

\author{Maksym Fedorchuk}
\author{Matthew Grimes}

\thanks{The first author was partially supported by the NSA Young Investigator grant H98230-16-1-0061 
and Alfred P. Sloan Research Fellowship.}

\begin{abstract}
We perform a variation of geometric invariant theory stability analysis 
for 2nd Hilbert points of bi-log-canonically embedded pointed curves of genus two.
As a result, we give a GIT construction of the log canonical models
$\M_{2,1}(\alpha)$ for $\alpha=2/3\pm \epsilon$, and obtain a VGIT presentation of the second flip 
in the Hassett-Keel program 
for the moduli space of pointed genus two curves.
\end{abstract}

\maketitle

\section{Introduction}
The goal of the Hassett-Keel program is to inductively construct 
stacks $\Mg{g,n}(\alpha)$ of singular curves whose moduli spaces are the following 
log canonical models of the stack $\Mg{g,n}$ of Deligne-Mumford stable curves:
\[
\overline{M}_{g,n}(\alpha)=\proj \bigoplus_{m=0}^{\infty} 
\HH^0\bigl(\overline{\mathcal M}_{g,n},m(K_{\overline{\mathcal M}_{g,n}}+\alpha\delta+(1-\alpha)\psi)\bigr).
\]

Several stages of this program, over the field $\CC$ of complex numbers,
have been worked out to date in \cite{hassett-hyeon2009,hassett2013log,afsflip-1} for arbitrary $g$, 
and can be summarized by the following diagram (see \cite[Main Theorem, p.3]{afsflip-1} for precise
statements and notation):

\vspace{1pc}

\makebox[\textwidth][c]{
\xymatrix{
\Mg{g,n} \ar@{^(->}[r] \ar[d]	& \Mg{g,n}(\frac{9}{11}) \ar[dd] & \Mg{g,n}(\frac{9}{11}-\epsilon) 
\ar@{^(->}[r] 
\ar[d] \ar@{_(->}[l] & \Mg{g,n}(\frac{7}{10}) \ar[dd] & \Mg{g,n}(\frac{7}{10}-\epsilon) 
\ar@{_(->}[l]
\ar@{^(->}[r] \ar[d] & \Mg{g,n}(\frac{2}{3}) \ar[dd]& \Mg{g,n}(\frac{2}{3}-\epsilon) 
\ar@{_(->}[l]  \ar[d]  \\
\M_{g,n}\ar[rd]	& &\M_{g,n}(\frac{9}{11}-\epsilon) \ar[rd] \ar[ld]	
& & \M_{g,n}(\frac{7}{10}-\epsilon)\ar[rd] \ar[ld] & & \M_{g,n}(\frac{2}{3}-\epsilon)\ar[ld] \\
& \M_{g,n}(\frac{9}{11})& & \M_{g,n}(\frac{7}{10})& & \M_{g,n}(\frac{2}{3}) & 
}
}

Geometric Invariant Theory (GIT) constructions were given for $\Mg{g}(\alpha)$ for $\alpha > 2/3$ by Hassett and Hyeon \cite{hassett-hyeon2009,hassett2013log} using Chow and asymptotic Hilbert stability analysis of pluricanonically embedded curves.
The construction by \cite{afsflip-1} of the next step (the wall-crossing from $\alpha=7/10-\epsilon$ to $\alpha=2/3-\epsilon$) 
in the Hassett-Keel program
required the machinery of stacks and good moduli spaces, 
and until now no GIT construction of this step was known for any 
$(g,n)$.  As one of our main results in this paper, we obtain such a GIT construction for $(g,n)=(2,1)$.
Namely, restricting in the above diagram to $\alpha<7/10$ and taking $g=2$ and $n=1$, we have
the following picture:
\vspace{1pc}
\begin{equation}
	\label{diag:second-flip}
	\begin{gathered}
\xymatrix{
\Mg{2,1}(\frac{7}{10}-\epsilon) 
\ar@{^(->}[r] \ar[d]	& \Mg{2,1}(\frac{2}{3}) \ar[dd] & \Mg{2,1}(\frac{2}{3}-\epsilon) 
\ar@{_(->}[l] \ar[d] \\
\M_{2,1}(\frac{7}{10}-\epsilon)\ar[rd] & & \M_{2,1}(\frac{2}{3}-\epsilon)\ar[ld]_{\simeq} \\
 & \M_{2,1}(\frac{2}{3}) & 
 }
\end{gathered}
\end{equation}
where $\M_{2,1}(\frac{2}{3}-\epsilon) \to \M_{2,1}(\frac{2}{3})$ is an isomorphism of projective good moduli 
spaces, and $\M_{2,1}(\frac{7}{10}-\epsilon) \to \M_{2,1}(\frac{2}{3})$ is a birational 
contraction of the Weierstrass divisor (see \cite[Remark 1.1]{afsflip-1}).
Our main result can now be stated (somewhat vaguely) as follows:
\begin{main-theorem}
\label{T:intro} 
	There is a projective master space $\overline{\mathbb{H}}$ over $\spec \CC$ 
	with a $\PGL(5)$-action
and a family of ample linearizations $\mathcal{L}_{\beta}$ 
such that Diagram \eqref{diag:second-flip} is one of the two VGIT wall-crossings relating 
GIT quotients $\overline{\mathbb{H}}\gitq_{\cL_{\beta}} \PGL(5)$. 
\end{main-theorem}
We give a fuller and more technical statement of this result in Theorem \ref{T:maintheorem} after all necessary notation is introduced. We note that the 
second VGIT wall-crossing from the above theorem gives rise to a new moduli stack
of genus $2$ pointed curves with at worst $A_5$-singularities, whose moduli space is a single point.

We have several reasons for focusing on $\M_{2,1}$. The first is largely historical: Hassett's paper \cite{Hassett-genus2} that laid foundations for the Hassett-Keel program was devoted to the log minimal model program for 
$\M_{2}$. The program is also complete for $\M_{3}$ \cite{hyeon-lee_genus3}, and so we naturally wondered
what happens for $\M_{2,1}$, a space whose complexity sits between that of $\M_{2}$ and $\M_{3}$. 
The second reason is that, as we have discovered in the course of this project, 
the GIT approach to the Hassett-Keel program for $\M_{2,1}$ requires unorthodox constructions,
which are unexpected from the point of view of classical GIT constructions of moduli spaces of curves.

To illustrate one such intricacy, we recall that in the case of pointed curves, we have a natural variation of the GIT
problem due to linearizations coming from the parameter space of embedded curves
and from the space of points.  Since the moduli space $\M_{2,1}(7/10-\epsilon)$ was constructed
by Hyeon and Lee \cite{hyeon-lee_genus4} as a GIT quotient where the parameter space of embedded curves 
was a Hilbert scheme of degree $6$
genus $2$ curves, one would expect that by varying the linearization in their GIT setup, 
one could obtain the next step 
in the Hassett-Keel program, namely $\M_{2,1}(2/3)$. This turned out not to be the case, 
as explained in \S\ref{S:prior}. Instead, our construction requires a
delicate analysis of finite Hilbert stability (cf. \cite{AFS-stability}) of degree $6$ genus $2$ curves, 
and, as we will explain in the sequel, 
is impossible to replicate using the more classical Chow 
or asymptotic Hilbert stability.

\subsection*{Roadmap of the paper} In Section \ref{S:geometry-genus-2}, we discuss various
stability conditions 
for pointed genus $2$ curves with at worst type $A$ (i.e., $y^2=x^n$) singularities, introduce
bi-log-canonical curves, and 
their realizations as quadric sections of rational normal surface scrolls in $\PP^4$. 
In Section \ref{S:VGIT}, we set up our VGIT
problem, state our main result (Theorem \ref{T:maintheorem}), 
and discuss its relation to prior works.  The proof of Theorem \ref{T:maintheorem} is given 
in Section \ref{S:proof}.

We work over the field $\CC$ of complex numbers throughout.

\section{Geometry of genus $2$ curves with $A$-singularities}
\label{S:geometry-genus-2}
In what follows, we collect together facts about the geometry of 
pointed genus $2$ curves that are needed for our VGIT stability analysis;
while these results are all well-known in the case of smooth curves, 
their extension to the case of mildly singular curves, e.g., to curves with higher $A$-singularities,
including those introduced in \cite{afsflip-1}, requires some care. 

We will say that $(C,p)$ is a \emph{pointed genus $2$ curve} if 
$C$ is a projective Gorenstein curve of arithmetic genus $2$,
and $p\in C$ is its smooth point. We will say 
that $p$ is a \emph{Weierstrass point} of $C$ if $\omega_C\simeq \cO_C(2p)$. 

We now introduce five notions of stability for pointed genus $2$ 
curves.
The first four of these appeared 
in \cite{afsflip-1}, 
whose definitions of elliptic tails and bridges we keep (see \cite[Definition 2.1]{afsflip-1}),
while the second was introduced in \cite{hyeon-lee_genus4}.
\begin{definition}
\label{D:stability}
Suppose that $(C,p)$ is a pointed genus $2$ curve with 
$\omega_{C}(p)$ ample. 
Then we will say that $(C,p)$ is:
	\begin{itemize}
	\item \emph{$A_2$-stable}: 
		if $C$ has only $A_1$, $A_2$-singularities,
		and $C$ has no nodally attached elliptic tails;
		\item \emph{$A_3$-stable}: 
		if $C$ has only $A_1$, $A_2$, $A_3$-singularities, and $C$ has no nodally or tacnodally attached elliptic tails, 
		and no nodally attached elliptic bridges;
		\item \emph{$A_4$-stable}: if $C$ has only $A_1$, $A_2$, $A_3$, $A_4$-singularities,
		and $C$ has no 
		nodally or tacnodally attached elliptic tails or nodally attached elliptic bridges; 
		\item \emph{$A_4^{non-W}$-stable}: if $C$ is $A_4$-stable, and $p$ is not a Weierstrass point.
		\item \emph{$A_5$-stable}: if  $C$  has only $A_1$, $A_2$, $A_3$, $A_4$, $A_5$-singularities,
$C$ has no $A_1$ or $A_3$-attached elliptic tails or nodally attached elliptic bridges,
and $p$ is not a Weierstrass point of $C$.
	\end{itemize}
	Finally, we will say that $(C,p)$ is a \emph{Weierstrass curve} if $(C,p)$ is $A_4$-stable and $p$ is a Weierstrass point. In other words, a Weierstrass curve is an $A_4$-stable curve that is not $A_4^{non-W}$-stable.
\end{definition}

These five notions of stability are modifications of the standard 
Deligne-Mumford stability \cite{DM}, and arise naturally in the Hassett-Keel 
program for $\M_{2,1}$.  The moduli space of $A_3$-stable pointed genus 
$2$ curves was first constructed in \cite{hyeon-lee_genus4} in the course of 
their analysis of the Hassett-Keel program for $\M_4$. 
The (generalizations of) moduli stacks of 
$A_2$, $A_3$, $A_4$, and $A_4^{non-W}$-stable curves, and their good moduli spaces, 
were constructed for an arbitrary genus and an arbitrary number of marked points in 
\cite{afsflip-1} using stack-theoretic techniques. We note that in the terminology of \cite{afsflip-1}, $A_3$-stable curves are 
$\left(\frac{2}{3}+\epsilon\right)$-stable, $A_4$-stable curves are $\frac{2}{3}$-stable, and, as we will see shortly in Corollary \ref{C:omega}, 
$A_4^{non-W}$-stable curves are $\left(\frac{2}{3}-\epsilon\right)$-stable.  

It was established in 
\cite[Theorem 2.7]{afsflip-1}, that the stacks $\Mg{2,1}(\alpha)$ of 
$\alpha$-stable curves are algebraic, and there are open immersions of stacks
\begin{equation}\label{D:stack-flip}
\overline{\cM}_{2,1}\left(\frac{2}{3}+\epsilon\right)\hookrightarrow\overline{\cM}_{2,1}\left(\frac{2}{3}\right)\hookleftarrow
	\overline{\cM}_{2,1}\left(\frac{2}{3}-\epsilon\right).
\end{equation}
Our goal is to give a VGIT presentation for Diagram \eqref{D:stack-flip}. 
Our approach to a GIT construction of the moduli stacks $\Mg{2,1}(\alpha)$ 
is to consider genus $2$ curves equipped with a bi-log-canonical embedding,
which we discuss below in \S\ref{S:bicanonical}.

\begin{remark}
A moment of reflection will convince the reader that the only $A_5$-stable but not $A_4^{non-W}$-stable
curve is isomorphic to a union of two smooth rational curves along an $A_5$-singularity, with a marked point 
on one of the components, and that all such curves are isomorphic. We will prove later on that this isomorphism class
gives a unique closed point in the stack of $A_5$-stable curves (see Lemma \ref{L:A5-curve}).
\end{remark}

It is clear that both $A_3$-stability and 
$A_4^{non-W}$-stability imply $A_4$-stability, and that $A_4^{non-W}$-stability implies $A_5$-stability.
Before we proceed, we will need the following technical result  about
$A_4$ and $A_5$-stable curves:
\begin{prop}\label{P:3-connected} Every $A_4$-stable and $A_5$-stable pointed genus 
$2$ curve is \emph{$3$-connected} in the sense of 
\cite[Definition 3.1]{embeddings-curves}.
\end{prop}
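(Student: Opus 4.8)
The plan is to verify the defining inequality of $3$-connectedness head on: for an arbitrary decomposition $C=C_1\cup C_2$ into nonempty proper subcurves having no common component, one must show that the connecting number, which here is $C_1\cdot C_2:=h^0(\cO_{C_1\cap C_2})$, is at least $3$. The computational backbone is the Mayer--Vietoris sequence $0\to\cO_C\to\cO_{C_1}\oplus\cO_{C_2}\to\cO_{C_1\cap C_2}\to 0$, which together with $p_a(C)=2$ gives
\[
C_1\cdot C_2=\chi(\cO_{C_1})+\chi(\cO_{C_2})-\chi(\cO_C)=\chi(\cO_{C_1})+\chi(\cO_{C_2})+1.
\]
So, arguing by contradiction, I would assume $C_1\cdot C_2\le 2$, that is $\chi(\cO_{C_1})+\chi(\cO_{C_2})\in\{0,1\}$, noting that $C_1\cdot C_2\ge 1$ because $C$ is connected.

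Next I would determine the possible intersection loci. Since $C$ has only $A_1,\dots,A_5$ singularities, since an $A_{2k-1}$-singularity is two smooth branches meeting with contact order $k$ and hence contributes $k$ to a local intersection number, and since an $A_{2k}$-singularity is unibranch (so it cannot lie simultaneously on $C_1$ and on $C_2$), every point of $C_1\cap C_2$ is an $A_{2j-1}$-singularity with $j\in\{1,2,3\}$ contributing exactly $j$. Hence $C_1\cdot C_2\le 2$ forces one of: (a) $C_1$ and $C_2$ meet at a single node; (b) they meet at a single tacnode; (c) they meet at two distinct nodes. In cases (a) and (b) the unique intersection point forces both $C_1$ and $C_2$ to be connected, while in case (c) one of them may in addition split into two connected pieces, one through each node.

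The heart of the proof is then a short bookkeeping. Expanding each $\chi(\cO_{C_i})$ as a sum of $1-p_a$ over connected components, combining with $\chi(\cO_{C_1})+\chi(\cO_{C_2})\in\{0,1\}$ and with ampleness of $\omega_C(p)$, which gives for every connected subcurve $Z\subsetneq C$
\[
0<\deg\bigl(\omega_C(p)|_Z\bigr)=2p_a(Z)-2+\bigl(Z\cdot\overline{C\setminus Z}\bigr)+[p\in Z],
\]
so that a rational $Z$ must satisfy $Z\cdot\overline{C\setminus Z}+[p\in Z]\ge 3$, one rules out every configuration containing a rational component attached along a single node or a single tacnode, as well as the possibility $\{p_a(C_1),p_a(C_2)\}=\{0,2\}$ in (a). The configurations that remain always contain a connected genus-one subcurve $E$ not passing through $p$ that is attached to the rest of $C$ at a single node (from (a) or (c)), at a single tacnode (from (b)), or at two nodes (from (c)); that is, $E$ is a nodally attached elliptic tail, a tacnodally attached elliptic tail, or a nodally attached elliptic bridge in the sense of \cite[Definition 2.1]{afsflip-1}. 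Each of these is forbidden by $A_4$-stability, and equally by $A_5$-stability (recall that being attached along an $A_1$-singularity, resp.\ an $A_3$-singularity, means by definition being nodally, resp.\ tacnodally, attached). This contradicts the standing assumption, so $C_1\cdot C_2\ge 3$.

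The argument is entirely elementary once arranged this way; the only step requiring genuine care is case (c) with $C_1$ or $C_2$ disconnected, where one must enumerate the few distributions of arithmetic genus among the at most three rational and elliptic components and check that each distribution either makes $\omega_C(p)$ non-ample on one of the rational components or else exhibits a forbidden elliptic tail or bridge — together with the minor but necessary verification that the genus-one subcurve produced really does satisfy the precise definitions of elliptic tail and bridge from \cite{afsflip-1}.
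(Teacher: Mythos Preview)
Your proposal is correct and follows essentially the same strategy as the paper's proof: both assume a decomposition with small connecting number, use the genus formula together with ampleness of $\omega_C(p)$ to pin down the arithmetic genera of the pieces, and then exhibit a forbidden elliptic tail or bridge. The paper organizes the case analysis by the value of $\deg\omega_C|_B-\deg\omega_B\in\{1,2\}$ and then by $p_a(B)$, whereas you organize it by the geometric type of the intersection locus (one node, one tacnode, two nodes); your version is more explicit about the local contributions of $A_{2j-1}$-singularities and about the disconnected subcurve case, while the paper's is terser, but the substance is the same.
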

\begin{proof} 
Suppose $B\subset C$ is a strict subcurve such that $\deg \omega_{C}\vert_B-\deg \omega_B=1$.
Then $B$ meets $\overline{C\setminus B}$ in a node and necessarily $p_a(B)=1$. 
If $B$ is unpointed, then $B$ is an elliptic tail; otherwise, $B$ is an elliptic bridge. 

Suppose $B\subset C$ is a strict subcurve such that $\deg \omega_{C}\vert_B-\deg \omega_B=2$.
If $p_a(B)=0$, then necessarily $B$ is pointed.  It follows that $\overline{C\setminus B}$ has either 
an elliptic bridge, or an $A_3$-attached elliptic tail. If $p_a(B)=1$, then $p_a(\overline{C\setminus B})=0$ 
and so $B$ is unpointed. It follows that $B$ is either an elliptic bridge, or an $A_3$-attached elliptic tail.
\end{proof}
\begin{remark} The above result fails in higher genus, and in fact already
in genus $3$ as illustrated by a union of a smooth genus $2$ curve and 
a pointed rational curve meeting along an $A_3$-singularity.
\end{remark}
\begin{corollary}\label{C:omega} Suppose $(C,p)$ is an $A_4$-stable or $A_5$-stable curve. 
Then $C$ is \emph{honestly hyperelliptic},
that is $\vert \omega_C\vert$ is a base-point-free linear system defining a finite 
flat degree $2$ map 
$\kappa_C\colon C \to \PP^1$. In particular, a point $p\in C$ is a Weierstrass point if and only if $p$
is a ramification point of the canonical morphism $\kappa_C$, if and only if
$(C,p)$ is a Weierstrass curve.
\end{corollary}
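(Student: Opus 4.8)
The plan is to derive everything from Proposition~\ref{P:3-connected}, supplemented by the analysis of dualizing systems of $m$-connected curves in \cite{embeddings-curves}. First I would dispose of the cohomological bookkeeping. Since $C$ is $3$-connected it is connected, and since its singularities are of type $A$ it is reduced, so $h^0(\cO_C)=1$; by Serre duality $h^1(\omega_C)=h^0(\cO_C)=1$, and Riemann--Roch gives $h^0(\omega_C)=\chi(\omega_C)+1=(\deg\omega_C+1-p_a)+1=2$. Thus it suffices to prove that $\omega_C$ is base-point-free: the complete linear system $|\omega_C|$ then defines a morphism $\kappa_C\colon C\to\PP^1=\PP\big(H^0(\omega_C)^{\vee}\big)$ with $\kappa_C^{*}\cO_{\PP^1}(1)\cong\omega_C$. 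Base-point-freeness is exactly where I would invoke \cite{embeddings-curves}: a $2$-connected Gorenstein curve of arithmetic genus at least $2$ has base-point-free dualizing sheaf, and by Proposition~\ref{P:3-connected} our $C$ is even $3$-connected.

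Next I would verify that $\kappa_C$ is finite, flat, and of degree $2$. For finiteness it is enough that no component $B\subseteq C$ be contracted, i.e.\ that $\deg\big(\omega_C|_B\big)\neq0$; when $B=C$ this degree is $\deg\omega_C=2p_a-2=2$, and when $B\subsetneq C$ the adjunction-type identity $\deg\big(\omega_C|_B\big)=\deg\omega_B+\big(B\cdot\overline{C\setminus B}\big)=2p_a(B)-2+\big(B\cdot\overline{C\setminus B}\big)$ combined with $3$-connectedness gives $\deg\big(\omega_C|_B\big)\ge 2p_a(B)+1>0$. Hence $\kappa_C$ is quasi-finite and proper, so finite; it is flat because $C$ is Cohen--Macaulay (being Gorenstein) and $\PP^1$ is regular; and its degree is $\deg\omega_C/\deg\cO_{\PP^1}(1)=2$.

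Finally I would treat the Weierstrass point. One direction is formal: if $p$ is a ramification point of $\kappa_C$, then the scheme-theoretic fibre $\kappa_C^{-1}(\kappa_C(p))$ equals $2p$, and since this fibre has class $\kappa_C^{*}\cO_{\PP^1}(1)\cong\omega_C$ we get $\omega_C\cong\cO_C(2p)$, i.e.\ $p$ is a Weierstrass point. Conversely, if $\omega_C\cong\cO_C(2p)$ then the tautological section of $\cO_C(2p)$, viewed in $H^0(\omega_C)$, has zero divisor exactly $2p$; since base-point-freeness forces $h^0(\omega_C(-p))=h^0(\omega_C)-1=1$, this section spans $H^0(\omega_C(-p))$ and so agrees, up to scalar, with the section of $\omega_C$ cutting out $\kappa_C^{-1}(\kappa_C(p))$; hence that fibre is $2p$ and $p$ is a ramification point. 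The remaining equivalence with ``$(C,p)$ is a Weierstrass curve'' is then immediate: in the $A_4$-stable case it is the definition, and in the $A_5$-stable case it holds vacuously, since by definition such a $(C,p)$ has $p$ non-Weierstrass and hence is not a Weierstrass curve.

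I expect the only genuinely delicate point to be base-point-freeness of $\omega_C$, specifically the possibility of a base point at a \emph{singular} point of $C$; this is precisely what the results of \cite{embeddings-curves} take care of. If one preferred a self-contained argument, base-point-freeness at a \emph{smooth} point $q$ is quick---Serre duality gives $h^0(\omega_C(-q))=h^0(\cO_C(q))$, and $h^0(\cO_C(q))\ge2$ would produce a nonconstant degree-$1$ morphism $C\to\PP^1$, which $3$-connectedness prevents from contracting any component and which is therefore a finite birational, hence isomorphic, map onto $\PP^1$, impossible for $p_a(C)=2$---but the singular case still seems to need the input of \cite{embeddings-curves}.
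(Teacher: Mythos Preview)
Your proposal is correct and follows essentially the same route as the paper: combine Proposition~\ref{P:3-connected} with the results of \cite{embeddings-curves}. The paper's proof is a one-liner citing \cite[Theorem~3.6]{embeddings-curves}, which directly gives the dichotomy ``$\omega_C$ very ample or $|\omega_C|$ defines a double cover of a rational normal curve''---and for $p_a=2$ only the latter is possible---whereas you invoke only the base-point-freeness statement from \cite{embeddings-curves} and then verify finiteness, flatness, the degree, and the Weierstrass equivalence by hand; both arguments are sound and the added detail in yours is all routine.
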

\begin{proof} This follows from Proposition \ref{P:3-connected} and \cite[Theorem 3.6]{embeddings-curves}.
\end{proof}

\subsection{Bi-log-canonical embeddings of $A_4$ and $A_5$-stable curves}
\label{S:bicanonical}

Given an $A_4$-stable or $A_5$-stable pointed genus $2$ curve $(C,p)$, 
the line bundle $\omega^2_C(2p)$ 
will be called a \emph{bi-log-canonical line bundle}.
In analogy with the standard case of smooth curves, we have the following
simple result:
\begin{lemma}\label{L:very-ample} The line bundle $\omega^2_C(2p)$ is very ample for every $A_4$-stable or $A_5$-stable
pointed curve $(C,p)$.
\end{lemma}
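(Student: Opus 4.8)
The plan is to verify very ampleness of $L := \omega_C^2(2p)$ by the standard criterion: $L$ separates points and tangent vectors, which for a Gorenstein curve amounts to showing that for every length-two subscheme $Z \subset C$ (including the nonreduced ones supported at a point), the restriction map $\HH^0(C, L) \to \HH^0(Z, L\vert_Z)$ is surjective. Since $\deg L = 2(2g-2) + 2 = 6$ on a curve of arithmetic genus $2$, Riemann-Roch gives $h^0(C,L) = 6 + 1 - 2 = 5$ (using $h^1(C,L) = 0$, which follows because $\deg L > 2g-2$ and $C$ is Gorenstein so Serre duality applies), and likewise $h^0(C, L(-Z)) = 4 + 1 - 2 = 3$ as soon as $h^1(C, L(-Z)) = 0$. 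So the whole statement reduces to the vanishing $h^1(C, L(-Z)) = \HH^0(C, \omega_C \otimes L^{-1}(Z))^\vee = 0$ for all such $Z$, i.e. to showing that $\omega_C \otimes L^{-1}(Z) = \cO_C(Z)\otimes \omega_C^{-1}(-2p)$ has no global sections.

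First I would set up Serre duality: for a Gorenstein projective curve $C$ and a line bundle $M$, $h^1(C,M) = h^0(C, \omega_C \otimes M^{-1})$; apply this with $M = L(-Z)$. One must be slightly careful that $L(-Z)$ is genuinely a line bundle — this is where honest hyperellipticity from Corollary \ref{C:omega} and the fact that $p$ is a smooth point enter: $\cO_C(-Z)$ is an ideal sheaf which is invertible precisely when $Z$ is a Cartier divisor, and a length-two subscheme of a curve with at worst $A_k$-singularities is Cartier (at a planar curve singularity every length-$\ell$ subscheme cut out by a single non-zerodivisor is Cartier; alternatively, handle the rare non-Cartier case — $Z$ the full reduced scheme of the two branches at a node — separately via a direct cohomology computation). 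Granting this, the task is to show $\HH^0\bigl(C, \cO_C(Z) \otimes \omega_C^{-1}(-2p)\bigr) = 0$.

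The main computation, and the step I expect to be the real obstacle, is this vanishing. A nonzero section would give an effective Cartier divisor $D$ with $\cO_C(D) \simeq \cO_C(Z)\otimes \omega_C^{-1}(-2p)$, hence $D + 2p \sim Z$ in an appropriate sense — but degrees give $\deg(D) = 2 - (2g-2) - 2 = -4 < 0$, which is already a contradiction once one knows that a line bundle of negative total degree on a (possibly reducible) curve with no global sections... except that on a \emph{reducible} curve a line bundle of negative total degree can still have sections if it has positive degree on some component. So the genuine work is to rule this out using $3$-connectedness (Proposition \ref{P:3-connected}): if $s$ is a nonzero section vanishing on a proper subcurve, restrict to the subcurve $B$ on which $s$ is nowhere zero and use the multidegree inequalities forced by $3$-connectedness together with $\deg\bigl(\omega_C^{-1}(-2p)\bigr)\vert_B$ and $\deg \cO_C(Z)\vert_B \le 2$ to get a numerical contradiction on $B$. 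This is exactly the kind of argument that Proposition \ref{P:3-connected} was proved to enable, and I would expect the irreducible case to be immediate from the degree count while the reducible case is a short case analysis over the possible configurations of $Z$ (both points on one component, one on each, nonreduced at a node, etc.).

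Finally I would assemble: $h^1(C, L(-Z)) = 0$ for all length-two $Z$ gives $h^0(C, L(-Z)) = 3 = h^0(C,L) - 2$, so the restriction $\HH^0(C,L) \to \HH^0(Z, L\vert_Z)$ is surjective for every such $Z$; combined with base-point-freeness (the case of length-one $Z$, which is the same computation with the degree count even more slack), this shows $\vert L \vert$ embeds $C$, proving $\omega_C^2(2p)$ is very ample.
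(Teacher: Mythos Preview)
Your approach is correct in outline but takes a genuinely different route from the paper. The paper's proof is a two-line application of the Catanese--Franciosi--Hulek--Reid criterion \cite[Theorem 1.1]{embeddings-curves}: a line bundle $L$ on a Gorenstein curve is very ample as soon as $\deg L\vert_B \geq 2p_a(B)+1$ for every strict subcurve $B$, and this numerical condition is immediate from the stability hypotheses (indeed from $3$-connectedness, which gives $\deg\omega_C\vert_B \geq 2p_a(B)+1$, whence $\deg\omega_C^2(2p)\vert_B \geq 4p_a(B)+2 \geq 2p_a(B)+1$). You instead propose to reprove that criterion by hand in this special case via the cohomological vanishing $h^1(C, L\otimes \cI_Z)=0$ for all length-two $Z$. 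That is a legitimate and more self-contained strategy, and your multidegree analysis using $3$-connectedness is exactly the engine that drives the cited theorem.

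Two points to flag. First, a harmless arithmetic slip: for $g=2$ the degree of $\omega_C^{-1}(-2p)(Z)$ is $2-(2g-2)-2=-2$, not $-4$; the sign is still negative so your argument survives. Second, and more substantively, your handling of the non-Cartier case is the genuine weak spot. At an $A_n$ singularity there are length-two subschemes that are not Cartier (e.g.\ at a node $k[x,y]/(xy)$ the ideal $(x,y^2)$ has colength two but is not principal), and your parenthetical ``every length-$\ell$ subscheme cut out by a single non-zerodivisor is Cartier'' is tautological rather than a reason those are the only ones. For such $Z$ the sheaf $L\otimes\cI_Z$ is not a line bundle and Serre duality must be applied in the form $\HH^1(C,L\otimes\cI_Z)^\vee \simeq \Hom(\cI_Z,\omega_C\otimes L^{-1})$, which then requires a separate argument. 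This is precisely the technical work that the CFHR paper packages for you, and is why the paper simply cites it.
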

\begin{proof} By \cite[Theorem 1.1]{embeddings-curves}, it suffices to 
verify that $\deg \omega^2_C(2p)\vert_{B} \geq 2p_a(B)+1$ for every strict subcurve $B\subset C$. This is straightforward using the definition of 
$A_4$ or $A_5$-stability.
\end{proof}

A choice of an isomorphism $V \simeq \HH^0\bigl(C, \omega^2_C(2p)\bigr)$
then gives an embedding $\phi\colon C\hra \PP V^{\vee}$ whose image 
is a degree $6$ curve in $\PP V^{\vee} \simeq \PP^4$, which we call
a \emph{bi-log-canonical curve}. 
We denote
by $I_C$ the homogeneous ideal of $\phi(C)$.
It is standard to verify that $\phi(C)$ is projectively normal, and so we can define
\emph{the $m^{th}$ Hilbert point of $C$} to be the short exact sequence
\[
0 \to (I_C)_m \to \Sym^m V \to \HH^0\bigl(C, \omega^{2m}_C(2mp)\bigr) \to 0
\]
considered as a point in the Grassmannian
\[
\Grass\bigl((I_C)_m, \Sym^m V\bigr).
\]
Our focus will be entirely on the 2nd Hilbert points of bi-log-canonically 
embedded curves, and so we note that for $m=2$, we have $\dim \Sym^2 V = 15$, 
while $h^0(C,\omega^{4}_C(4p)) = 11$,  
and $\dim (I_C)_2=4$.

\subsection{Surfaces containing bi-log-canonical curves}
 \label{S:surfaces-containing-curves}
 
 Our analysis of GIT semistability of 2nd Hilbert points of bi-log-canonical curves is rooted in the analysis of minimal surfaces on which 
 these curves and their degenerations lie, and so we pause to describe them.
Whether the marked point $p$ is a Weierstrass point on $C$ determines on
which minimal surface the bi-log-canonical curve lies. 
\subsubsection{Non-Weierstrass curves} 
First, we see that  
$A_5$-stable (and hence $A_4^{non-W}$-stable)
curves have planar models given 
by cuspidal plane quartics.  This observation and the existence of a GIT moduli space for plane quartics will be essential in our analysis of GIT stability for these curves.
\begin{lemma}\label{L:cuspidal-quartic}
Suppose that $(C,p)$ is an $A_5$-stable curve.
Then the line bundle 
$\omega_C(2p)$ is base-point-free and defines a morphism $\psi\colon 
C \to \PP^2$ such that $\psi(C)$ is a plane quartic. Moreover, $\psi$ is an isomorphism onto its image away from 
$p$ and $\psi(p)$ is a cusp of $\psi(C)$. 
\end{lemma}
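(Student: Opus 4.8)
The plan is to study the complete linear system $|\omega_C(2p)|$ directly. By Corollary~\ref{C:omega}, $p$ is not a Weierstrass point, so $\omega_C\not\simeq\cO_C(2p)$ and $h^0(\cO_C(2p))=1$; moreover $C$ is honestly hyperelliptic with $\kappa_C\colon C\to\PP^1$ finite flat of degree $2$ and $\omega_C\simeq\kappa_C^{*}\cO_{\PP^1}(1)$. Since $C$ is Gorenstein, Riemann--Roch and Serre duality give $h^0(\omega_C(2p)(-kp))=3,2,2,1,0$ for $k=0,1,2,3,4$; in particular $h^0(\omega_C(2p))=3$, every section vanishing at $p$ vanishes there to order $\ge 2$, and the subspace of such sections is exactly $H^0(\omega_C)$. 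Thus $|\omega_C(2p)|$ defines a rational map $\psi\colon C\dashrightarrow\PP^2$, and the gap at order $1$ in the sequence above shows that the differential of $\psi$ vanishes at $p$.

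I would then show $\psi$ is a morphism, birational onto a plane quartic. Since $C$ is $3$-connected (Proposition~\ref{P:3-connected}), one has $\deg(\omega_C(2p)|_B)\ge\deg\omega_B+3=2p_a(B)+1$ for every strict subcurve $B\subsetneq C$, while $\deg\omega_C(2p)=4=2p_a(C)$; hence $\omega_C(2p)$ is globally generated by the base-point-freeness criterion of \cite{embeddings-curves} (cf.\ the proof of Lemma~\ref{L:very-ample}). Being defined by a complete linear system, $\psi\colon C\to\PP^2$ is non-degenerate, and it is finite because $\deg(\omega_C(2p)|_{C_i})>0$ for every component $C_i$. If $C$ is irreducible, $\deg(\psi(C))\cdot\deg(\psi)=4$ with $\deg(\psi(C))\ge 2$ forces $\deg(\psi)\in\{1,2\}$, and $\deg(\psi)=2$ would make $\psi$ factor through the unique degree-$2$ map $\kappa_C$, giving $\omega_C(2p)\simeq\omega_C^{\otimes 2}$ and contradicting that $p$ is not Weierstrass. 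If $C$ is reducible, the conductor identity $\omega_C|_{C_i}\simeq\omega_{C_i}(C_i\cap\overline{C\setminus C_i})$ together with $3$-connectedness forces $C$ to be a union of two rational components meeting with total contact $3$, shows that no nonzero section of $\omega_C(2p)$ vanishes identically on a component (so each $\psi(C_i)$ spans $\PP^2$), and, by a degree count, that $\psi$ carries one component birationally onto a plane cubic and the other isomorphically onto a line. In all cases $\psi\colon C\to Q:=\psi(C)$ is finite and birational onto a plane quartic, so $p_a(Q)=3$.

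To pin down the singularity I would use the $\delta$-invariant count for the finite birational morphism $\psi$: since $C$ and $Q$ are connected and reduced, $\sum_{o\in Q}\delta_o(\psi)=p_a(Q)-p_a(C)=3-2=1$, so $\psi$ is a local isomorphism away from a unique point $o_0\in Q$, and $\delta_{o_0}(\psi)=1$. As $d\psi_p=0$ at the smooth point $p$, we must have $o_0=\psi(p)$. Since $\psi$ is a bijection on analytic branches and $p$ is smooth, $(Q,o_0)$ is unibranch: an extra branch at $o_0$ would force $\psi$ to be a closed immersion of the smooth germ $(C,p)$ onto a smooth branch, contradicting $d\psi_p=0$. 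Consequently $\psi^{-1}(o_0)=\{p\}$, so $\cO_{Q,o_0}\subseteq\cO_{C,p}\simeq\CC[[t]]$ with colength $\delta_{o_0}(\psi)=1$; because $Q$ is a plane curve, $\cO_{Q,o_0}$ is Gorenstein, which forces $\cO_{Q,o_0}=\CC[[t^2,t^3]]$, an ordinary cusp. Hence $\psi(p)$ is a cusp of $Q$ and $\psi$ restricts to an isomorphism $C\setminus\{p\}\xrightarrow{\sim}Q\setminus\{\psi(p)\}$. (Alternatively, the cusp is visible in a local coordinate $t$ at $p$, where $\psi=[1:f_3:f_2]$ with $\mathrm{ord}_p f_i=i$ and the image is analytically $y^2=x^3$.)

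The Riemann--Roch bookkeeping and the local analysis at $p$ are routine; the step demanding genuine care is the assertion that $\psi$ is birational onto an honest plane quartic in the reducible case, where one must use $3$-connectedness and the conductor formula to rule out a component being contracted or multiply covered and to understand how the images of the two components meet. The honest-hyperellipticity of $A_5$-stable curves and the embedding criteria of \cite{embeddings-curves} used for Lemma~\ref{L:very-ample} are the essential inputs.
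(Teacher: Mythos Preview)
Your argument is essentially correct and reaches the same conclusion, but by a genuinely different route than the paper. The paper's proof is a one-line trick: form the arithmetic genus $3$ curve $\bar C$ by imposing a cusp at $p$, observe that $\bar C$ is not hyperelliptic (precisely because $p$ is non-Weierstrass on $C$), invoke \cite[Theorem~3.6]{embeddings-curves} to conclude that $\omega_{\bar C}$ embeds $\bar C$ as a plane quartic, and then note that $\nu^{*}\omega_{\bar C}=\omega_C(2p)$ for the partial normalization $\nu\colon C\to\bar C$. Everything about $\psi$---base-point-freeness, birationality, the cusp at $\psi(p)$, the isomorphism away from $p$---is then read off from $\nu$. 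Your approach instead analyzes $|\omega_C(2p)|$ directly: Riemann--Roch for the vanishing sequence at $p$, the numerical criterion for global generation, a case split on reducibility for birationality, and a $\delta$-invariant count to pin down the unique new singularity as a cusp. This is more hands-on and arguably more transparent (one sees explicitly why the cusp appears, via the gap $h^0(\omega_C(p))=h^0(\omega_C)$), at the cost of length; the paper's approach is slicker but relies on the canonical-embedding theorem for singular non-hyperelliptic curves as a black box.

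One slip to flag: in the reducible case your claim that ``no nonzero section of $\omega_C(2p)$ vanishes identically on a component'' is false for the unmarked component $C_2$. Indeed $\omega_C(2p)|_{C_2}\simeq\cO_{\PP^1}(1)$ while the scheme $C_1\cap C_2$ has length $3$, so there is a one-dimensional space of sections supported on $C_1$ (vanishing on $C_2$), and correspondingly $\psi(C_2)$ spans only a line, not all of $\PP^2$. This does not damage your conclusion---you immediately say $\psi$ sends $C_2$ isomorphically onto a line---but the intermediate assertion and its parenthetical consequence should be corrected. Similarly, your unibranch argument at $o_0=\psi(p)$ is phrased backwards: the clean way is to note that $d\psi_p=0$ already forces the image germ of $(C,p)$ to contribute $\delta\ge 1$, so any additional preimage point would push $\delta_{o_0}(\psi)\ge 2$; hence $\psi^{-1}(o_0)=\{p\}$, and unibranchness of $(Q,o_0)$ follows.
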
 

\begin{proof}
Let $\bar C$ be the unique arithmetic genus $3$ curve obtained from $C$ by imposing 
a cusp at $p$. Since $p$ is not a Weierstrass point of $C$, we have that 
$\bar C$ is not strictly hyperelliptic. It follows by \cite[Theorem 3.6]{embeddings-curves}
that $\omega_{\bar C}$ is very ample and embeds $\bar C$ as a quartic curve in 
$\PP^2$. The claim follows by noting that the partial normalization morphism 
$\nu \colon C\to \bar C$ satisfies $\nu^{*}\omega_{\bar C}=\omega_C(2p)$ and 
that $h^0\bigl(C, \omega_C(2p)\bigr)=3$. 
\end{proof}
\begin{definition}\label{D:cuspidal-quartic} 
We call the image $\psi(C)$ \emph{the cuspidal quartic model of $(C,p)$}.
\end{definition}

Let $\psi(C) \hookrightarrow \PP^2$ be the cuspidal 
quartic model of $(C,p)$. 
Blowing-up the cusp $q:=\psi(p)$ in $\PP^2$ 
realizes $C$ as a smooth curve in the 
class $|4H-2E|$ on the Hirzebruch surface $\Bl_{q} \PP^2\simeq \FF_1$,
 tangent to the exceptional divisor $E$ at the point $p\in C$. 
 The embedding of $\FF_1$ by the complete linear system $\vert 2H-E\vert$ 
 realizes the surface as a \emph{cubic surface scroll} $S_1 \hookrightarrow \PP^4$ 
 (the reader can find an overview of the properties of 
 cubic surface scrolls in \cite[\S6.1]{harris-roth-starr}, which we use in what follows without further comments).
 The directrix of $S_1$ is the line given by the image of $E$, and for
this embedding of $S_1$ into $\PP^4$, the curve $C$ becomes a quadric section of 
$S_1$ tangent to the directrix at $p\in C$. 
 
 Very concretely, we can realize 
 $S_1$ as the image of the rational map 
 $\PP^2 \dashrightarrow \PP^4$ given by 
 \begin{equation}\label{scroll-embedding}
 [x:y:z] \mapsto [xz: yz: x^2: xy: y^2], 
 \end{equation}
 which leads to a determinantal presentation of the quadric generators of the ideal of $S_1$.
 Namely, $(I_{S_1})_{2}$ is given by the $2\times 2$ minors of
 \[
 \left(\begin{matrix} z_0 & z_2 & z_3 \\ z_1 & z_3 & z_4\end{matrix}\right).
 \]
We denote the resulting net of quadrics by $N_1$, so that  
 \begin{equation}\label{E:N1}
 N_1:=(z_0z_3 -z_1z_2, z_0z_4-z_1z_3, z_2z_4-z_3^2)\in \Grass(3, \Sym^ 2V).
 \end{equation}
The equation of the directrix of $S_1$ is $z_2=z_3=z_4=0$ in these coordinates.
 
 \subsubsection{Weierstrass curves}
 \label{S:weierstrass}
 Suppose $(C,p)$ is an $A_4$-stable curve and $p$ is its Weierstrass point.
 Since $C$ is honestly hyperelliptic by Corollary \ref{C:omega}, it admits a finite flat degree $2$ map to 
 $\PP^1$, 
 and so we can represent the affine curve $C\setminus\{p\}$ by the equation
 \begin{equation}\label{affine-Weierstrass}
 y^2=x^5+c_3x^3+c_2x^2+c_1x+c_0, \quad \text{$c_i\in \CC$},
 \end{equation}
and take $p$ to be the point at infinity.  It is easy to check that 
 $dx/y$ is a Rosenlicht differential on $C$ with a double zero at infinity, and so is a global regular section of $\omega_C(-2p)$. 
 Note that $x$ has a double pole at infinity and is regular everywhere else, 
 and $y$ has
 a pole of order $5$ at infinity and is regular everywhere else.
 It follows that 
 \begin{equation}\label{E:bi-log-basis}
 \HH^0\bigl(C,\omega_C^2(2p)\bigr)=
  \left\langle x^3\left(\frac{dx}{y}\right)^2, \ y\left(\frac{dx}{y}\right)^2, \  x^2\left(\frac{dx}{y}\right)^2, \ x\left(\frac{dx}{y}\right)^2, \ \left(\frac{dx}{y}\right)^2 \right\rangle.
 \end{equation}
 Denoting the listed generators of $\HH^0\bigl(C,\omega_C^2(2p)\bigr)$
 by $z_0,\dots, z_4$, we see that 
 the resulting homogeneous ideal of the bi-log-canonical curve $\phi(C)$ in $\PP^4$ is 
 \begin{equation}\label{E:Weierstrass}
 (z_0z_3 -z_2^2, z_0z_4-z_2z_3, z_2z_4-z_3^2, z_1^2-z_0z_2-c_3z_2z_3-c_2z_2z_4-c_1z_3z_4-c_0z_4^2),
 \end{equation} 
and the marked point is $p=[1:0:0:0:0]$.
The first three quadrics in \eqref{E:Weierstrass} cut out a singular minimal surface in $\PP^4$ given by a  
\emph{cone over a rational normal cubic curve}. We call this surface $S_2$.
The quadric generators of the homogeneous ideal of $S_2$ are given by the net 
 \begin{equation}\label{E:N2}
 N_2:=(z_0z_3 -z_2^2, z_0z_4-z_2z_3, z_2z_4-z_3^2) \in \Grass(3, \Sym^ 2V),
 \end{equation}
 which is of course given by the $2\times 2$ minors of 
  \[
 \left(\begin{matrix} z_0 & z_2 & z_3 \\ z_2 & z_3 & z_4\end{matrix}\right).
 \]
 
 \subsection{The ramphoid cuspidal atom}
  \label{S:atom}

Next, we introduce a special Weierstrass curve, called \emph{the ramphoid cuspidal atom}, and denoted by $C_R$. 
This curve is defined by the following equation
 \begin{equation}\label{E:RamphoidCusp}
  I_{C_R}=(z_0z_3 -z_2^2, z_0z_4-z_2z_3, z_2z_4-z_3^2, z_1^2-z_0z_2).
 \end{equation}
As above, we mark the point $\infty:=[1:0:0:0:0]\in C_R$.  Note that $C_R$ is simply a compactification of
the affine curve $y^2=x^5$ given by Equation \eqref{affine-Weierstrass} with $c_3=c_2=c_1=c_0=0$, and 
marked by the point at infinity. It is evident that the connected component of the identity 
in the automorphism group of 
$(C_R, \infty)$ is $\Aut(C_{R},\infty)^{\circ}\simeq \GG_m$,
where $\GG_m$ acts by 
\begin{align*}
&t\cdot (x,y) =(t^2 x, t^5 y). 
\end{align*}
In particular, the 1-PS of $\GL(5)$ (respectively, the induced 1-PS of $\SL(5)$) 
acting on the generators $(z_0,\dots, z_4)$ of $\HH^0\bigl(C_R,\omega_{C_R}^2(2\infty)\bigr)$
as given by \eqref{E:bi-log-basis} with weights 
$(0,-1,-2,-4,-6)$ (resp., $(13, 8, 3, -7, -17)$) 
is the connected component of the identity in the stabilizer of $(C_{R}, \infty)$ in $\GL(5)$
(resp., $\SL(5)$).

We will make use of the first-order deformation 
theory of $(C_{R}, \infty)$, and so we pause to describe it here.  Let $\Def^1(C_{R},\infty)$ and $\Def^1(A_4)$ be the
first-order deformation spaces of the pointed curve $(C_{R}, \infty)$ and the $A_4$-singularity, respectively.  By 
standard deformation theory, we
have $\dim_{\CC} \Def^1(C_{R},\infty)=5$
and $\dim_{\CC} \Def^1(A_4)=4$. 
Furthermore, we have a short exact sequence
\begin{equation}\label{E:def-A4}
0 \to T_{\infty}\to \Def^1(C_{R},\infty) \to \Def^1(A_4) \to 0,
\end{equation}
where $T_{\infty}$ is the tangent space to $C_{R}$ at $\infty$ naturally identified with the first-order deformations of the 
point $\infty$ in $C_{R}$.
(A priori, $T_{\infty}$ is only a subspace of the kernel of the surjection $\Def^1(C_{R},\infty) \to \Def^1(A_4)$, but in this case 
exactness follows by dimension considerations.)
Let $W$ be the tangent space to the locus of Weierstrass curves (that is, those given by Equation \eqref{affine-Weierstrass}) 
in the miniversal deformation
space of $(C_{R},\infty)$. Since Equation \eqref{affine-Weierstrass} also describes the miniversal deformation 
of the $A_4$-singularity, we conclude that the subspace
$W \hookrightarrow \Def^1(C_{R},\infty)$ maps isomorphically onto $\Def^1(A_4)$ in \eqref{E:def-A4}.
We also note that  \eqref{E:def-A4} is a  short exact sequence of $\GG_m$-representations, where
 $\GG_m\simeq \Aut(C_{R},\infty)^{\circ}$ acts on $\Def^1(A_4)$ with weights $(-4, -6, -8, -10)$, and on $T_{\infty}$ with weight $1$.

\section{VGIT problem and main theorem}
\label{S:VGIT}

As discussed in \S\ref{S:bicanonical}, every $A_4$ or $A_5$-stable pointed genus $2$ curve $(C,p)$ is
equipped with a bi-log-canonical embedding $C \hookrightarrow \PP V^{\vee}$, where 
$V \simeq \HH^0\bigl(C, \omega_C^2(2p)\bigr)\simeq \CC^5$. By 
\S\ref{S:surfaces-containing-curves}, 
to each such bi-log-canonical curve $C\hra \PP^4$ we can associate a unique
(either smooth or singular) cubic surface scroll $S$ containing it.  We now introduce 
an element $h(C,p)$ of $\Grass(4, \Sym^2 V) \times \PP V^{\vee}$, which we call \emph{the H-point of $(C,p)$}, defined by
\begin{equation}\label{H-point}
h(C,p):=\bigl((I_C)_2, p\bigl)\in \Grass(4, \Sym^2 V) \times \PP V^{\vee},
\end{equation}
where $[(I_C)_2 \subset \Sym^2 V] \in \Grass(4, \Sym^2 V)$ is the 2nd Hilbert point of $C$.
Note that we always have 
\begin{equation}\label{net-quadrics}
(I_S)_2 \subset (I_C)_2, \ \text{where either $(I_S)_2=N_1$ or $(I_S)_2=N_2$, up to $\SL(V)$-action.}
\end{equation}

Denote by $\bH^{sm}$ 
the constructible locus inside 
$\Grass(4, \Sym^2 V) \times \PP V^{\vee}$ 
consisting of all H-points $h(C,p)$, as $(C,p)$ varies over all  
{smooth} 
bi-log-canonically embedded pointed genus $2$ curves, 
and by $\bbH$ the Zariski closure of $\bH^{sm}$. 
The standard action of $\SL(V)\simeq \SL(5)$ on 
$\Grass(4, \Sym^2 V) \times \PP V^{\vee}$ restricts to an action 
on $\bbH$, where the orbits in $\bH^{sm}$ 
are in bijection with abstract 
isomorphism classes of smooth 
pointed genus $2$ curves.

We will construct a
compactification of $M_{2,1}$ by taking a GIT quotient of $\bbH$. Moreover,
since $h(C,p)$ is well-defined for every bi-log-canonically embedded $A_4$-stable curve $C$, 
we will obtain the moduli space of $A_4$-stable curves as a GIT quotient of $\bbH$ after choosing an
appropriate linearization of the $\SL(5)$-action on $\bbH$. Note that
because the space of ample linearizations of the $\SL(5)$-action on $\bbH$
contains a two-dimension subspace spanned by the pullbacks of 
$\cO_{\Grass(4,\Sym^2 V)}(1)$ and $\cO_{\PP V^\vee}(1)$, 
we have a natural VGIT problem (see \cite{dolgachev-hu} and \cite{thaddeus-VGIT} 
for an introduction to VGIT), which we proceed to describe. 

For an ample $\QQ$-line-bundle 
\begin{equation}\label{beta-linearization}
\cL_{\beta}:=\cO_{\Grass(4,\Sym^2 V)}(1) \boxtimes\cO_{\PP^4}(\beta), \quad \text{where $\beta\in (0,\infty)\cap \QQ$,} 
\end{equation}
we let $\bbH(\beta)$ be the semistable locus inside 
$\bbH$ with respect to the linearization given by $\cL_{\beta}$. 
The points of $\bbH(\beta)$ will be called \emph{$\beta$-stable pairs}. 
We then have a sequence of GIT quotients
\[
\overline{H}_{\beta}:=\bbH(\beta) \gitq_{\cL_\beta} \SL(5).
\] 

Our first goal is to show that 
for a range of $\beta$-values, 
the semistable locus $\bbH(\beta)$ parameterizes only curves. Namely,
we will show that for $\beta \in (1/7, 1/2]$, every $\beta$-stable pair $(I, p)\in \Grass(4, \Sym^2 V)\times \PP^4$
is such that $(I, p)=h(C,p)$ for some
bi-log-canonically embedded curve $(C,p)$.
Incidentally, we will see that for $\beta \in (1/7, 1/2)$, the semistable locus 
$\bbH(\beta)$ consists exactly of H-points of $A_4$-stable curves, 
thus establishing a posteriori that the locus of H-points of $A_4$-stable curves 
is locally closed in $\Grass(4, \Sym^2 V)\times \PP^4$. 

Our second goal is to explicitly describe curves parameterized by 
$\bbH(\beta)$ for all $\beta \in (1/7, 1/2]$. Our third
and final goal is to show that the resulting GIT quotients are identified 
with the moduli spaces of $A_3$, $A_4$, and $A_4^{non-W}$-stable curves, respectively, thus 
giving a VGIT presentation for the 2nd flip of $\M_{2,1}$.

We denote by $\bbH(a,b)$ the locus of $\beta$-stable pairs if the stability does not change as 
$\beta$ varies in the interval $(a,b) \subset (0,\infty)\cap \QQ$. With this, we can state our main theorem as follows:

\vspace{1pc}

\begin{theorem} 
	\label{T:maintheorem}
For $\beta\in (1/7, 1/2]$, the $\beta$-stable pairs have the following description:
	\begin{enumerate}
\item[(A)] We have that 
\[
\bbH(1/7, 4/13) =\bigl\{h(C,p)\mid \ \text{$(C,p)$ is a bi-log-canonically embedded $A_3$-stable curve}\bigr\},
\]
and, consequently,
\[
\bigl[\bbH(1/7, 4/13) / \PGL(5)\bigr] \simeq \Mg{2,1}(2/3+\epsilon).\]
\item[(B)] 
We have that  
\[
\bbH(4/13) =\bigl\{h(C,p)\mid \ \text{$(C,p)$ is a bi-log-canonically embedded $A_4$-stable curve}\bigr\},
\]
and, consequently,
\[
\bigl[\bbH(4/13) / \PGL(5)\bigr] \simeq \Mg{2,1}(2/3).\]
\item[(C)]
We have that 
\[
\bbH(4/13,1/2) =\left\{h(C,p)\mid \ \text{$(C,p)$ is a bi-log-canonically embedded $A_4^{non-W}$-stable curve}\right\},
\]
and, consequently,
\[
\left[\bbH(4/13,1/2) / \PGL(5)\right] \simeq \Mg{2,1}(2/3-\epsilon).
\]
\item[(D)] 
We have that 
\[
\bbH(1/2) =\bigl\{h(C,p)\mid \ \text{$(C,p)$ is a bi-log-canonically embedded $A_5$-stable curve}\bigr\},
\]
and, consequently,
\[
\left[\bbH(1/2) / \PGL(5)\right] 
\]
is the moduli stack of $A_5$-stable curves.

\item[(E)] The moduli spaces of the GIT quotient stacks described in 
Parts (A), (B), (C), (D) are isomorphic to the log canonical
models of $\M_{2,1}$ as follows:
\begin{align}
\bbH(1/7,4/13) \gitq \SL(5) & \simeq \M_{2,1}(2/3+\epsilon), \\
\bbH(4/13) \gitq \SL(5)  &\simeq \M_{2,1}(2/3), \label{lc-2/3}\\
\bbH(4/13,1/2) \gitq \SL(5)  &\simeq \M_{2,1}(2/3-\epsilon), \\
\bbH(1/2) \gitq \SL(5)  &\simeq \M_{2,1}(19/29) \simeq \{\text{point}\},
\end{align}
and fit into a commutative diagram
\begin{equation}\label{E:VGIT}
\begin{aligned}
\xymatrix{
\bbH(4/13-\epsilon) \ar@{^(->}[r] \ar[d]	
& \bbH(4/13) \ar[dd] & \bbH(4/13+\epsilon) \  \ar[d]  \ar@{_(->}[l]   \ar@{^(->}[r] & \bbH(4/13) \ar[dd] \\
\M_{2,1}(2/3+\epsilon)\ar[rd]
& 			
&  \M_{2,1}(2/3-\epsilon)\ar[dr] \ar[dl]_{\simeq}	\\
&	   \M_{2,1}(2/3)     &  & \M_{2,1}(19/29)=\{\text{point}\} \\
}
\end{aligned}
\end{equation}
The birational morphism $\M_{2,1}(2/3+\epsilon) \to \M_{2,1}(2/3)$ contracts the Weierstrass divisor to a point,
while $\M_{2,1}(2/3-\epsilon) \to \M_{2,1}(2/3)$ is an isomorphism.
\end{enumerate}
\end{theorem}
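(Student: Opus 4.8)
The plan is to run the Hilbert--Mumford numerical criterion for the $\SL(5)$-action on $\bbH$ linearized by $\cL_{\beta}$ and to match the resulting stability conditions against Definition \ref{D:stability}; Part (E) then follows formally from \cite{afsflip-1} and \cite{hyeon-lee_genus4}. For a one-parameter subgroup $\lambda$ acting on the coordinates $z_0,\dots,z_4$ with integer weights $r_0\ge\cdots\ge r_4$, $\sum r_i=0$, the Hilbert--Mumford index of an H-point $h=(I,p)$ decomposes as
\[
\mu^{\cL_{\beta}}(h,\lambda)=\mu_{\Grass}(I,\lambda)+\beta\,\mu_{\PP^4}(p,\lambda),
\]
where $\mu_{\Grass}(I,\lambda)$ is read off from a $\lambda$-adapted monomial basis of $\Sym^2 V$ relative to the $4$-plane $I$ (so $\lambda$ destabilizes $I$ when the flat limit $\lim_{t\to 0}\lambda(t)\cdot I$ is spanned by low-weight quadrics), and $\mu_{\PP^4}(p,\lambda)=-\min\{r_i:p_i\ne 0\}$. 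The first reduction is to exploit \eqref{net-quadrics}: since $(I_C)_2$ contains $N_1$ or $N_2$ up to $\SL(V)$, one treats the two cases separately, and in each $I$ is that fixed net together with one further quadric, normalized via \eqref{E:Weierstrass} in the Weierstrass case and via the cuspidal quartic model of Lemma \ref{L:cuspidal-quartic} (equivalently, a quadric section of the cubic scroll $S_1$) in the non-Weierstrass case. This makes both $\mu$-terms explicitly computable on the relevant strata.

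\textbf{Non-semistability.} Next one shows that every H-point not on the list of Parts (A)--(D) is $\cL_{\beta}$-unstable for the corresponding $\beta$, in two layers. First, any $\beta$-semistable pair $(I,p)$ with $\beta\in(1/7,1/2)$ must already equal $h(C,p)$ for a bi-log-canonically embedded curve: pairs with degenerate $I$ --- not cutting out a pure one-dimensional scheme, or one not containing the requisite scroll, or with $p$ in special position relative to the directrix --- admit explicit destabilizing $\lambda$, using that $\bbH$ is the Zariski closure of the smooth locus. Second, among genuine curves each forbidden feature --- a singularity worse than $A_4$ (resp.\ $A_5$); a nodally or tacnodally attached elliptic tail; a nodally attached elliptic bridge; and, for $\beta>4/13$, a Weierstrass point at $p$ --- yields a destabilizing one-parameter subgroup (the $\lambda$ adapted to the local normal form of the singularity, or the $\lambda$ isolating the offending subcurve), with $\mu^{\cL_{\beta}}>0$ checked from the decomposition above. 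The wall $\beta=4/13$ is precisely where the marked-point term $\beta\,\mu_{\PP^4}(p,\lambda)$ on a Weierstrass curve comes into balance with the Hilbert-point term along the distinguished $\lambda$ of weights $(13,8,3,-7,-17)$, so Weierstrass curves switch status there; for $\beta<4/13$ the same computations eliminate the $A_3$- and $A_4$-attached configurations permitted by Part (A) but not (C), and the wall $\beta=1/2$ isolates the $A_5$-curve of Lemma \ref{L:A5-curve}.

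\textbf{Semistability.} Conversely, one proves that the H-point of every $A_3$- (resp.\ $A_4$-, $A_4^{non-W}$-, $A_5$-) stable curve is $\cL_{\beta}$-semistable for $\beta$ in the stated range; this is the crux and the main obstacle. Following the finite-Hilbert-stability approach of \cite{AFS-stability}, it reduces to a worst-case estimate: for fixed $\lambda$, the maximum of $\mu^{\cL_{\beta}}(h(C,p),\lambda)$ over the locus of (say) $A_4$-stable curves is attained at a curve with maximal $\lambda$-degeneration, and one must show it is $\le 0$. Since we use the finite $2$nd Hilbert point rather than an asymptotic one, there is no Kempf-type reduction to finitely many subgroups, so the estimate is genuinely combinatorial: one stratifies $\lambda$ by the weighted filtration it induces on $V$, bounds the number of weight-$w$ degree-two monomials forced to lie outside $(I_C)_2$ using projective normality, $h^0(\omega^4_C(4p))=11$, and $N_j\subset(I_C)_2$, and balances this against $\beta\,\mu_{\PP^4}(p,\lambda)$; for non-Weierstrass curves this is organized most efficiently by comparison with classical GIT stability of the cuspidal plane quartic of Lemma \ref{L:cuspidal-quartic}. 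The equality case $\mu^{\cL_{\beta}}=0$ for special $\lambda$ produces the strictly semistable points: at $\beta=4/13$ the $\GG_m\subset\SL(5)$ that is the identity component of $\Stab(C_R,\infty)$ gives $\mu=0$ for every Weierstrass curve, so all Weierstrass curves are $S$-equivalent to the ramphoid cuspidal atom $C_R$, and the sequence \eqref{E:def-A4}, with its $\GG_m$-weights $(-4,-6,-8,-10)$ on $\Def^1(A_4)$ and $+1$ on $T_{\infty}$, identifies the Luna slice at $C_R$ and pins down the closed orbit, hence the local structure of the quotient; the $\beta=1/2$ wall is handled similarly.

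\textbf{From GIT quotients to log canonical models.} Granting Parts (A)--(D), each GIT quotient stack is matched with the corresponding Hassett--Keel stack. Since $\omega_C^2(2p)$ is functorial and, by projective normality together with the vanishing underlying the exact sequence defining the Hilbert point, $\pi_*\bigl(\omega^2(2p)\bigr)$ is locally free of rank $5$ and commutes with base change, every family of $\alpha$-stable curves induces a family of H-points, giving a morphism $\Mg{2,1}(\alpha)\to\bigl[\bbH(\beta)/\PGL(5)\bigr]$; Parts (A)--(D) show it is bijective on points and an isomorphism on automorphism groups, hence an isomorphism of stacks because the bi-log-canonical family over $\bbH(\beta)$ is the universal $\alpha$-stable curve. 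Passing to good moduli spaces and invoking \cite[Theorem 2.7, Remark 1.1]{afsflip-1} (and \cite{hyeon-lee_genus4} for $\alpha=2/3+\epsilon$) identifies these with $\M_{2,1}(2/3+\epsilon)$, $\M_{2,1}(2/3)$, and $\M_{2,1}(2/3-\epsilon)$; for Part (D), $\bigl[\bbH(1/2)/\PGL(5)\bigr]$ has a unique closed point by Lemma \ref{L:A5-curve}, so its moduli space is a point, which one identifies with the log canonical model $\M_{2,1}(19/29)$ by a direct computation of the relevant section ring. Finally, the VGIT formalism of \cite{dolgachev-hu,thaddeus-VGIT} assembles the quotients at $\beta=4/13\mp\epsilon$, $\beta=4/13$, and $\beta=1/2$ into Diagram \eqref{E:VGIT}, the contraction of the Weierstrass divisor along the left edge being exactly the statement that all Weierstrass curves become $S$-equivalent to $C_R$ at $\beta=4/13$, in agreement with the stack-theoretic flip \eqref{D:stack-flip}.
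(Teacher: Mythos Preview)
Your outline follows the paper's architecture for the \emph{non}-semistability direction and for the wall-detection, but diverges substantially on the semistability direction, and this is exactly the point you flag as ``the crux and the main obstacle.'' The paper does \emph{not} prove semistability of all $A_4^{non-W}$-stable curves by a worst-case combinatorial Hilbert--Mumford estimate. Instead, it only verifies the numerical criterion directly for \emph{smooth} non-Weierstrass curves (Proposition~\ref{P:smooth-nonW}, a short case analysis), and then bootstraps to singular curves by an indirect projectivity argument: having shown $\bbH(4/13,1/2)\subset\{A_4^{non-W}\text{-stable}\}$, one maps the GIT quotient $\bbH(\beta)\gitq\SL(5)$ to the classical plane-quartic GIT quotient $Z\gitq\SL(3)$ via the cuspidal quartic model, proves this map is birational using a separate analysis of orbit closures of cuspidal quartics (Proposition~\ref{P:GIT-quartics} and Corollary~\ref{C:corollary-GIT}), and concludes it is surjective because both sides are projective. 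This replaces your proposed combinatorial estimate entirely. Your plan to compare with plane-quartic GIT only as an ``organizational tool'' for a direct computation misses that the comparison \emph{is} the proof.

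Likewise, Parts~(A) and~(B) are not established by a separate semistability verification for $A_3$-stable curves; they are deduced from the already-proved Part~(C) by analyzing the VGIT wall-crossing at $\beta=4/13$. After showing (via two independent arguments, one of them the torus trick of \cite{morrison-swinarski}) that $(C_R,\infty)$ is polystable at $\beta=4/13$, the paper computes the Luna slice there and reads off $\cZ^{+}=\cW$ and $\cZ^{-}=\{\text{the }A_4\text{-curve orbit}\}$ directly from the $\GG_m$-weights on $\Def^1(C_R,\infty)$; Parts~(A) and~(B) fall out immediately. Your sentence ``for $\beta<4/13$ the same computations eliminate the $A_3$- and $A_4$-attached configurations permitted by Part~(A) but not~(C)'' does not describe the actual transition (which is: Weierstrass curves enter, the $A_4$-curve exits) and suggests you have not identified this mechanism.

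Finally, for Part~(E) the paper does not simply cite \cite{afsflip-1}; it computes the descent of $\cL_\beta$ to the quotient explicitly as a combination of $\lambda$ and $\psi$ (Equation~\eqref{E:GIT-polarization}) using Mumford's standard formulas, and then matches this with $K_{\overline{\cM}_{2,1}}+\alpha\delta+(1-\alpha)\psi$ by a discrepancy calculation. Your approach of invoking \cite{afsflip-1} for the first three identifications is legitimate but is not what the paper does, and for the $19/29$ case the polarization computation is what actually pins down that value.
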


\begin{remark} We emphasize that the identification of the GIT quotient stacks
and the stacks of $\alpha$-stable curves in Parts (A--D) of Theorem \ref{T:maintheorem} are formal
consequences of the GIT stability analysis.  Namely, suppose 
we have established for some two values of $\alpha$ and $\beta$ that
\begin{equation}\label{E:id}
\bbH(\beta)=\left\{h(C,p)\mid \ \text{$(C,p)$ is a bi-log-canonically embedded $\alpha$-stable 
curve}\right\}.
\end{equation}
We claim this implies that
\[
\bigl[\bbH(\beta) / \PGL(5)\bigr] \simeq \Mg{2,1}(\alpha).
\]
Indeed, by \eqref{E:id}, we have a flat family of $\alpha$-stable curves over $\bbH(\beta)$.
This family induces a $\PGL(5)$-equivariant morphism $\bbH(\beta)\to \Mg{2,1}(\alpha)$
that descends to give a morphism $f\colon [\bbH(\beta)/\PGL(5)] \to \Mg{2,1}(\alpha)$.
Conversely, let $\pi\colon \cC\to \Mg{2,1}(\alpha)$ be the universal family of $\alpha$-stable curves,
with the universal section $\sigma$.
Let $\cV:=\pi_*\left(\omega^2_{\pi}(2\sigma)\right)$, which is a vector bundle of rank $5$. 
By Lemma \ref{L:very-ample}, we have a morphism $\cC \to \PP(\cV)$ over $\Mg{2,1}(\alpha)$ 
given by a relatively 
very ample line bundle $\omega_{\pi}^2(2\sigma)$. 
Let $\cP\to \Mg{2,1}(\alpha)$ be the $\PGL(5)$-torsor associated to $\PP(\cV)$. 
Then the pullback of $\PP(\cV)$ to $\cP$ is a trivial $\PP^4$-bundle and so we obtain 
a morphism $\cP\to \bbH(\beta)$ given by taking the $H$-points of the fibers in $\PP^4$. 
This defines a morphism $g\colon \Mg{2,1}(\alpha)
\to [\bbH(\beta)/\PGL(5)]$. That $g\circ f$ and $f\circ g$ are identities follows from the 
fact that for $\alpha$-stable curves $(C,p)$ and $(C',p')$, we have $(C,p) \simeq (C',p')$ if and only if $\PGL(5)\cdot h(C,p)=\PGL(5)\cdot h(C',p')$.
\end{remark}

We summarize the relationship between the 
notions of stability from Definition \ref{D:stability} (and \cite{afsflip-1})
and the GIT stability for H-points of bi-log-canonical curves in Table \ref{table-replacement}. 
\begin{table}[htb]
\renewcommand{\arraystretch}{1.3}
\begin{tabular}{|p{2cm}|p{2cm}|p{4.7cm}|p{3cm}|p{2.7cm}|}
\hline
Stability & New \newline singularities & Newly \newline disallowed \newline configurations & 
Range of \newline$\alpha$-stability\newline cf. Remark \ref{R:19/29} & GIT \newline stability \newline range\\
\hline
\hline
DM-stable & $A_1$ && $\alpha\in[1,9/11)$ & \\
\hline
$A_2$-stable & $A_2$ & $A_1$-attached elliptic tails & $\alpha\in (9/11,7/10) $& \\
\hline
$A_3$-stable & $A_3$& $A_3$-attached elliptic tails, \newline $A_1$-attached elliptic bridges& $\alpha\in(7/10,2/3)$ & $\beta\in(1/7,4/13)$\\
\hline
$A_4$-stable & $A_4$ & & $\alpha=2/3$ & $\beta=4/13$\\
\hline
$A_4^{non-W}$-stable & & $p$ is a Weierstrass point & $\alpha\in(2/3,19/29)$ & $\beta\in(4/13,1/2)$\\
\hline
$A_5$-stable & $A_5$ & & $\alpha=19/29$  & $\beta=1/2$\\
\hline
\end{tabular}
\medskip
\caption{Stability notions for pointed genus two curves.}
\label{table-replacement}
\end{table}
\begin{remark}\label{R:19/29} For a general $(g,n)$, the notion of $\alpha$-stability 
for curves arising in the Hassett-Keel program for $\Mg{g,n}$ has been defined only for $\alpha>2/3-\epsilon$ 
(see e.g., \cite{afsflip-1}). 
There does exist a conjectural notion of $\frac{19}{29}$-stability (see \cite[Table 3]{AFS-modularity}),
 and as this paper illustrates, our $A_5$-stability coincides with it for $(g,n)=(2,1)$.
\end{remark}

\subsection{Relation to prior works} 
\label{S:prior}
As discussed in the introduction, moduli stacks $\Mg{2,1}(\alpha)$ for $\alpha>2/3-\epsilon$ have
been studied before in other contexts.  Here we give a brief overview of these works and relate
them to our construction.

Hyeon and Lee gave a GIT construction of $\Mg{2,1}(7/10)$ and $\Mg{2,1}(7/10-\epsilon)$ by 
taking GIT quotients of the Chow and Hilbert schemes of bi-log-canonical genus $2$ curves in $\PP^4$
\cite[Theorems 2.2 and 4.2]{hyeon-lee_genus4} in the course of their study of the Hassett-Keel program for $\Mg{4}$. Initially, we hoped to emulate their approach to the construction of $\Mg{2,1}(\alpha)$ for $\alpha\leq 2/3$. However, as we have discovered, the Chow point 
(in the sense of \cite[Section 4]{hyeon-lee_genus4}) of the ramphoid cuspidal atom $(C_{R},\infty)$ is unstable with respect to any linearization
and so Chow or asymptotic Hilbert stability cannot be used to construct the moduli stack $\Mg{2,1}(2/3)$ of $A_4$-stable curves.

In \cite{poli-W}, Polishchuk introduces two moduli stacks, $\Mg{2,1}(\cZ)$ and $\overline{\cU}^{ns}_{2,1}(2)$,
and proves that there is a morphism $\Mg{2,1}(\cZ) \to \overline{\cU}^{ns}_{2,1}(2)$ contracting
the Weierstrass divisor to a single point \cite[Theorem 2.4.5]{poli-W}.
Both of these stacks are smooth, Deligne-Mumford, and contain $\mathcal{M}_{2,1}$ as an open substack.
The first, $\Mg{2,1}(\cZ)$, is a stable modular compactification of $\mathcal{M}_{2,1}$ 
for an extremal assignment $\cZ$ over $\Mg{2,1}$ given by all unmarked irreducible components 
(cf. \cite[Definition 1.5 and Example 1.12]{smyth-modular}).
The second, $\overline{\cU}^{ns}_{2,1}(2)$, parameterizes non-special 
marked curves of genus $2$ in the sense of \cite{poli-grobner}. Here we identify these 
moduli stacks with those appearing in the Hassett-Keel program for $\Mg{2,1}$, and hence
with our GIT quotient stacks.

\begin{prop} We have isomorphisms $\Mg{2,1}(7/10-\epsilon)\simeq \Mg{2,1}(\cZ)$, and 
$\overline{\cU}^{ns}_{2,1}(2)\simeq \Mg{2,1}(2/3-\epsilon)$. 
\end{prop}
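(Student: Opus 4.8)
The plan is to establish the two isomorphisms separately, in each case by producing a morphism of stacks from an explicit modular interpretation and then checking that it is an isomorphism on geometric points and on tangent spaces (or, alternatively, by invoking the fact that a morphism between smooth separated Deligne--Mumford stacks that is bijective on points and induces an isomorphism on completed local rings is an isomorphism).

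For the identification $\Mg{2,1}(\cZ)\simeq \Mg{2,1}(7/10-\epsilon)$: both stacks are smooth modular compactifications of $\cM_{2,1}$, so it suffices to match the objects. Recall from \cite{afsflip-1} that $(7/10-\epsilon)$-stable curves are exactly the $A_2$-stable curves of Definition \ref{D:stability}, i.e., pointed genus $2$ curves with only $A_1, A_2$ singularities and no $A_1$-attached elliptic tails. On the other side, Smyth's extremal-assignment compactification for the assignment $\cZ$ given by \emph{all unmarked irreducible components} replaces, in each $\cZ$-stable limit, the unmarked components by singularities; one checks directly that in genus $2$ with one marked point the only destabilizing configuration is an unmarked rational or elliptic tail meeting the rest in one or two points, and contracting it produces precisely an $A_1$- or $A_2$-attached curve with no unmarked components. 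Thus the two notions of stable curve coincide pointwise. Since $\Mg{2,1}(\cZ)$ is a stable modular compactification in the sense of \cite{smyth-modular}, it is flat and proper over the base, and the universal family over it is a family of $A_2$-stable curves, inducing a morphism $\Mg{2,1}(\cZ)\to \Mg{2,1}(7/10-\epsilon)$; this morphism is an isomorphism over the common open locus $\cM_{2,1}$ and is bijective on points, and since both stacks are smooth of the same dimension and the morphism is representable and quasi-finite, it is an isomorphism (e.g.\ by Zariski's main theorem for stacks, birational plus finite plus normal target).

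For the identification $\overline{\cU}^{ns}_{2,1}(2)\simeq \Mg{2,1}(2/3-\epsilon)$: by Corollary \ref{C:omega} and Lemma \ref{L:cuspidal-quartic}, the $(2/3-\epsilon)$-stable curves are exactly the $A_4^{non-W}$-stable curves, i.e., pointed genus $2$ curves $(C,p)$ with only $A_1,\dots,A_4$ singularities, no nodally/tacnodally attached elliptic tails, no nodally attached elliptic bridges, and with $p$ \emph{not} a Weierstrass point. Polishchuk's stack $\overline{\cU}^{ns}_{2,1}(2)$ parameterizes \emph{non-special} marked genus $2$ curves in the sense of \cite{poli-grobner}: one should unwind this definition and verify it is equivalent to requiring that $\omega_C(2p)$ (equivalently $H^1(C,\omega_C(2p))=0$ together with the appropriate very-ampleness/normality) behaves as in the smooth non-Weierstrass case, which by Lemma \ref{L:cuspidal-quartic} is precisely the condition defining the cuspidal quartic model and hence matches $A_4^{non-W}$-stability on points. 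Again both stacks are smooth and contain $\cM_{2,1}$ densely; a family argument as in the Remark following Theorem \ref{T:maintheorem} (using the relatively very ample $\omega_\pi^2(2\sigma)$ or the cuspidal-quartic model $\omega_\pi(2\sigma)$) produces a morphism between them extending the identity on $\cM_{2,1}$, and bijectivity on points plus smoothness and the same dimension forces it to be an isomorphism. Finally, Polishchuk's contraction $\Mg{2,1}(\cZ)\to \overline{\cU}^{ns}_{2,1}(2)$ of \cite[Theorem 2.4.5]{poli-W} then matches, under these identifications, the map $\Mg{2,1}(7/10-\epsilon)\to \Mg{2,1}(2/3-\epsilon)$ contracting the Weierstrass divisor, which is consistent with Diagram \eqref{diag:second-flip}.

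The main obstacle I anticipate is the careful translation of the two external definitions --- Smyth's $\cZ$-stability and Polishchuk's non-specialness --- into the singularity-and-configuration language of Definition \ref{D:stability}. The pointwise matching of $\cZ$-stable curves with $A_2$-stable curves requires checking that \emph{contracting} all unmarked components in a genus $2$ nodal curve yields exactly the allowed $A_1, A_2$-singularity configurations and nothing more, which is a finite but somewhat delicate enumeration of the boundary strata of $\Mg{2,1}$; and Polishchuk's ``non-special'' condition must be shown to cut out precisely the non-Weierstrass locus among $A_4$-stable curves, which relies on identifying his Gröbner-basis/normal-form description with the cuspidal quartic model of Lemma \ref{L:cuspidal-quartic}. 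Once the objects are matched, promoting the pointwise bijection to an isomorphism of stacks is routine given smoothness and the shared dense open $\cM_{2,1}$.
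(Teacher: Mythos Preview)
Your approach has a genuine gap: the pointwise matching you rely on is false for both isomorphisms.

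First, a correction: $(7/10-\epsilon)$-stable curves are the $A_3$-stable curves, not the $A_2$-stable ones (see Table~\ref{table-replacement}: $A_2$-stability corresponds to $\alpha\in(9/11,7/10)$, while $A_3$-stability corresponds to $\alpha\in(7/10,2/3)$). More seriously, even with the correct stability notion, the objects of $\Mg{2,1}(\cZ)$ and $\Mg{2,1}(7/10-\epsilon)$ do \emph{not} coincide. Smyth's $\cZ$-stable replacement contracts \emph{all} unmarked components, which can produce singularities that are not of type $A$: for example, an irreducible curve with a rational $3$-fold (coordinate-cross) singularity, or an irreducible curve with a two-branched singularity where one branch is a cusp. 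These are $\cZ$-stable but not $A_3$-stable. Conversely, there are $A_3$-stable curves that retain an unmarked rational component (e.g., two smooth rational curves meeting along an $A_1$ and an $A_3$, or along three nodes, with the marked point on one of them); such curves are not $\cZ$-stable. So your claim that ``contracting it produces precisely an $A_1$- or $A_2$-attached curve with no unmarked components'' and that ``the two notions of stable curve coincide pointwise'' is incorrect, and the argument built on it (producing a morphism from a universal family and invoking Zariski's main theorem) collapses.

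The paper's proof proceeds instead by identifying precisely the finite list of isomorphism classes on each side where the stability notions disagree, observing that the stacks are isomorphic away from these loci, that these exceptional loci lie in the schematic (trivially-stabilized) part of both stacks, and then checking that the resulting birational map of normal schemes has no indeterminacy by computing explicit stable replacements. The same analysis, with the same exceptional loci, handles $\overline{\cU}^{ns}_{2,1}(2)\simeq \Mg{2,1}(2/3-\epsilon)$: Polishchuk's non-special curves again include curves with non-$A$ singularities that are not $A_4^{non-W}$-stable, so your proposed pointwise matching fails there as well.
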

\begin{proof} We first prove that $\Mg{2,1}(7/10-\epsilon)\simeq \Mg{2,1}(\cZ)$.
The isomorphism types of $A_3$-stable curves that are not $\cZ$-stable are as follows:
\begin{enumerate}
\item[(a1)]  A union of two smooth rational
curves along $A_1$ and $A_3$-singularities, with one of the components marked.
This curve has no automorphisms (as the crimping parameter of the tacnode is killed by the automorphisms
of the unmarked component).  
\item[(a2)] A union of two smooth rational curves along three $A_1$-singularities, with one of the components marked. Such curves are Deligne-Mumford stable and also have no automorphisms.
\end{enumerate}
The isomorphism types of $\cZ$-stable curves that are not $A_3$-stable are as follows:
\begin{enumerate}
\item[(b1)]  An irreducible curve with a single two-branched singularity where one branch is a cusp,
and the other branch is smooth and transversal to the tangent space of the cusp (cf. the first
curve in \cite[\S2.3 Case IIb]{poli-W}). There is exactly one isomorphism class of such a curve,
and this curve has no automorphisms.
\item[(b2)] An irreducible curve with a single rational $3$-fold singularity (the coordinate cross singularity
of \cite[\S2.3 Case IIa]{poli-W}). Such curves also have no automorphisms. 
\end{enumerate}
We see that the complement of the union of (a1)-(a2) loci in $\Mg{2,1}(7/10-\epsilon)$ is isomorphic
to the complement of the union of (b1)-(b2) loci in $\Mg{2,1}(\cZ)$. Since these loci lie in the schematic
part of the respective Deligne-Mumford stacks, it suffices to show that the rational map 
$f\colon \Mg{2,1}(\cZ)\dashrightarrow \Mg{2,1}(7/10-\epsilon)$ has no indeterminacy. Towards this, we 
note that the Deligne-Mumford stable replacement of a curve $C$ of type (b2)
is a unique curve of type (a2), given by normalizing $C$ and attaching to $C$ a smooth rational 
component along the preimages of the rational $3$-fold singularity.  Hence the only possible indeterminacy point of 
$f$ is the (a1)-curve and the only possible indeterminacy point of $f^{-1}$ is the (b1)-curve. Since both stacks
are normal and schematic at these points, we conclude that $f$ is an isomorphism.  

The proof of $\overline{\cU}^{ns}_{2,1}(2)\simeq \Mg{2,1}(2/3-\epsilon)$ is exactly the same:
Away from the union of (a1)-(a2) loci $\Mg{2,1}(2/3-\epsilon)$ is isomorphic 
to the complement of the union of (b1)-(b2) loci in $\overline{\cU}^{ns}_{2,1}(2)$.
\end{proof}

\begin{remark} One can regard $\Mg{2,1}(7/10-\epsilon)$ (resp., $\Mg{2,1}(2/3-\epsilon)$) and 
$\Mg{2,1}(\cZ)$ (resp., $\overline{\cU}^{ns}_{2,1}(2)$) 
as two different approaches of compactifying the moduli 
space of crimping (i.e., attaching data) of a tacnodal singularity 
(cf. the discussion in \cite[p.468]{smyth-modular}). The presence of an $A_5$-singularity also shows
that $\Mg{2,1}(\alpha)$ are not stable modular compactifications of $\cM_{2,1}$ for $\alpha\leq 2/3$
(see \cite[Corollary 1.15]{smyth-modular}).
\end{remark}

We also note that, thanks to Corollary \ref{C:omega}, the stack 
$\Mg{2,1}(2/3-\epsilon)$ is isomorphic to the stack
$\mathcal{H}_5[4]$ of quasi-admissible genus $2$ hyperelliptic covers with at worst $A_4$-singularities,
as constructed in \cite[Proposition 4.2(1)]{fedorchuk-AD}, which leads to a different proof of 
\cite[Proposition 2.1.1]{poli-W} giving an isomorphism
\[
\Mg{2,1}(2/3-\epsilon) \simeq \overline{\cU}^{ns}_{2,1}(2) \simeq \mathcal{P}(2,3,4,5,6).
\]
\section{Proof of Main Theorem}
\label{S:proof}

Our GIT analysis of H-points of bi-log-canonical curves
is predicated on the study of GIT stability of (quadric sections of) degenerations 
of rational normal scrolls in $\PP^4$, and so in \S\ref{S:nets} we describe possible 
nets of quadrics in $\PP^4$ cutting out such degenerations. We recall the Hilbert-Mumford
numerical criterion as it applies to the elements of $\Grass(4,\Sym^2 V)\times \PP V^{\vee}$ 
in \S\ref{S:HM}, and use it in \S\ref{S:preliminary}
to narrow down potential $\beta$-stable pairs to those of H-pairs of bi-log-canonical curves.
In \S\ref{S:smooth}, we establish generic stability of smooth bi-log-canonical curves.
All these preliminary results are bootstrapped to give a proof of Theorem \ref{T:maintheorem} in
the latter parts of this section.

 \subsection{Orbits of nets of quadrics}
 \label{S:nets}
 It is easy to see that the $\SL(5)$-orbit of $N_1$ (given by \eqref{E:N1}) contains $N_2$ (given by \eqref{E:N2}) 
 in its closure. Indeed,
 \[
 \lim_{t\to 0}  
 \left(\begin{matrix} z_0 & z_2 & z_3 \\ tz_1+(1-t)z_2 & z_3 & z_4\end{matrix}\right)
 = \left(\begin{matrix} z_0 & z_2 & z_3 \\ z_2 & z_3 & z_4\end{matrix}\right).
 \]

In fact, we have a complete characterization of the orbit closure of 
$N_1$ in the Grassmannian of nets of quadrics in $\PP^4$. 
\begin{prop}\label{P:sl-orbits}The closure of the $\SL(5)$-orbit of $[N_1]\in \Grass(3,\Sym^2 V)$ is 
 \[
	 \overline{\SL(5)\cdot [N_1]}= \SL(5)\cdot [N_1] \cup \SL(5)\cdot [N_2] \cup \SL(5)\cdot [N_3] \cup \SL(5)\cdot [N_4] \cup \SL(5)\cdot [N_5]
 \]
 where 
 \begin{equation}\label{E:nets}
 \begin{aligned}
  N_3&= (z_0z_3, z_0z_4, z_2z_4-z_3^2), \\  
  N_4&= (z_0^2, z_0z_3, z_0z_4), \\ 
  N_5&= (z_0z_4-z_1z_3, z_1z_2, z_2z_4). 
 \end{aligned}
  \end{equation} 
 Moreover,
\[
 \overline{\SL(5)\cdot [N_2]}= \SL(5)\cdot [N_2]\cup \SL(5)\cdot [N_3] \cup \SL(5)\cdot [N_4].
 \]
 \end{prop}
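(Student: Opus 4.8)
The plan is to establish the orbit-closure description of $[N_1]$ by a combination of explicit degeneration limits (for the inclusions $\supseteq$) and a Hilbert–Mumford / dimension-count argument (for the reverse inclusions $\subseteq$). First I would record explicit one-parameter degenerations realizing each of the claimed specializations: the excerpt already exhibits $[N_1]\rightsquigarrow[N_2]$; similarly I would write down a $1$-PS $\lambda(t)$ acting on the determinantal presentation of $N_2$ (the $2\times 2$ minors of $\left(\begin{smallmatrix} z_0 & z_2 & z_3 \\ z_2 & z_3 & z_4\end{smallmatrix}\right)$) whose limit is $N_3$ (the net of a reducible quadric cone configuration: union of a plane and a quadric cone, say), and then $N_3\rightsquigarrow N_4$ (the net supported on a double plane). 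For the "extra" net $N_5$, whose generators $(z_0z_4-z_1z_3,\ z_1z_2,\ z_2z_4)$ do not obviously fit the determinantal family, I would exhibit it as a further limit of $N_1$ under a suitably chosen non-diagonal $1$-PS — this is the point where some care is needed, since $N_5$ is the net cut out by a reducible/non-normal degeneration of the scroll and is not in the closure of $N_2$.

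Next, for the reverse inclusions, the clean approach is: (i) the five listed nets are pairwise non-$\SL(5)$-conjugate and have strictly decreasing orbit dimensions, so $\bigcup_{i=1}^5 \SL(5)\cdot[N_i]$ is a closed, irreducible, $\SL(5)$-stable subset of $\Grass(3,\Sym^2 V)$ of dimension $\dim(\SL(5)\cdot[N_1])$; (ii) since it contains the irreducible set $\SL(5)\cdot[N_1]$ in codimension $0$, it equals $\overline{\SL(5)\cdot[N_1]}$ provided one checks there is no sixth orbit of the same dimension sitting between. To pin down that there is nothing else in the closure, I would argue on the level of the associated subscheme $Z\subset\PP^4$ cut out by a net in the closure: it is a flat limit of cubic surface scrolls, hence a degree-$3$, (arithmetic) genus-$0$ subscheme of dimension $2$ (possibly with embedded/nonreduced structure), and the classification of such degenerations of rational normal scrolls in $\PP^4$ — plane-plus-plane-plus-embedded-point, double-plane, plane-plus-cone, cone, reducible scroll — is short and each type is exactly one $\SL(5)$-orbit among $N_1,\dots,N_5$. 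Alternatively, and perhaps more efficiently given the small dimensions, one applies the Hilbert–Mumford criterion directly: for every $1$-PS $\lambda$ of $\SL(5)$ (reduced to diagonal form) compute $\lim_{t\to0}\lambda(t)\cdot[N_1]$ and check it lies in one of the five orbits; the monomial/weight bookkeeping here is finite and mechanical.

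Finally, the statement $\overline{\SL(5)\cdot[N_2]}=\SL(5)\cdot[N_2]\cup\SL(5)\cdot[N_3]\cup\SL(5)\cdot[N_4]$ follows by the same method restricted to the determinantal (cone) family: $N_2$ specializes to $N_3$ and $N_4$ as above, these three are pairwise non-conjugate with decreasing orbit dimension, the union is closed and irreducible, and $N_5$ is excluded because (for instance) the quadrics in $N_5$ have no common determinantal presentation of the symmetric $2\times 3$ type, or more invariantly, the singular locus / branch structure of the scheme cut out by $N_5$ is incompatible with being a flat limit of cones. I expect the main obstacle to be the two "off-family" issues: (a) producing the explicit $1$-PS degeneration $N_1\rightsquigarrow N_5$ with the right non-diagonal change of coordinates, and (b) proving rigorously that nothing lies strictly between $\SL(5)\cdot[N_1]$ and the union of the five orbits — i.e. that the classification of degenerate scrolls is exhaustive — rather than merely listing the expected pieces. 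Both are handled cleanly by passing to the associated subschemes and invoking the (well-understood) classification of degenerations of the rational normal cubic scroll in $\PP^4$.
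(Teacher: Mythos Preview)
Your plan for the $\supseteq$ direction is fine, but the $\subseteq$ direction has a genuine gap that you yourself flag at the end. Your argument (i)/(ii) is circular: a finite union of orbits with strictly decreasing dimensions is not automatically closed; closedness of the union is precisely the statement you are trying to prove. Your two proposed fixes also fall short. Computing diagonal $1$-PS limits of the fixed representative $[N_1]$ does not recover all boundary orbits --- to reach every orbit in $\overline{\SL(5)\cdot[N_1]}$ via Hilbert--Mumford you must allow $1$-PS limits of \emph{arbitrary} translates $g\cdot[N_1]$, which is no longer a finite mechanical check. And classifying flat limits of the scroll $S_1$ in the Hilbert scheme is not the same problem: you are working in $\Grass(3,\Sym^2 V)$, so the relevant limit is the limiting $3$-plane of quadrics, and the subscheme it cuts out need not be the flat limit of the scrolls. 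For instance, $N_4=(z_0^2,z_0z_3,z_0z_4)$ cuts out the hyperplane $z_0=0$ with embedded structure along a line, which is not a degree-$3$ surface at all, so your ``classification of degenerate scrolls'' does not obviously capture it.

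The paper's proof sidesteps all of this with a single structural input: by a theorem of Ellingsrud--Str{\o}mme, the closure $\overline{\SL(5)\cdot[N_1]}$ is smooth and consists entirely of \emph{determinantal} nets, i.e., nets spanned by the $2\times 2$ minors of some $2\times 3$ matrix of linear forms. This converts the $\subseteq$ direction into a finite normal-form problem: stratify by the dimension of the span of the six matrix entries. When the span is all of $V$, a short coordinate computation shows the net is $\SL(5)$-equivalent to $N_1$ or $N_5$; when the span has dimension at most $4$, the net is a cone over a determinantal net in $\PP^3$, and the Ellingsrud--Str{\o}mme analysis of the $\SL(4)$-orbit closure of $[N_2]$ (nets defining twisted cubics) gives exactly $N_2,N_3,N_4$. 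This also immediately yields the second statement about $\overline{\SL(5)\cdot[N_2]}$. Your geometric picture of scroll degenerations is not wrong as intuition, but the determinantal description is the missing ingredient that makes the enumeration rigorous and finite.
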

 \begin{proof} Since we work in $\PP^4$, the net $N_1$ is the generic net of determinantal quadrics, i.e., quadrics given by 
 the minors of a $2\times 3$ matrix
 with linear entries. It follows by \cite[Theorem 2]{ellingsrud1981variety} that the 
 closure of the orbit $\SL(5)\cdot [N_1]$ is a smooth subvariety of 
 $\Grass(3, \Sym^2 V)$ whose boundary points also parameterize determinantal nets.
  With this in mind, let $D(L_0,\dots,L_5)\in  \Grass(3,\Sym^2 V)$ be the net of quadrics spanned
	by the $2\times 2$ minors of the matrix
 \[
 M(L_0,\dots,L_5)=\left(\begin{matrix} L_0 & L_2 & L_4 \\ L_1 & L_3 & L_5\end{matrix}\right),
 \]
 where $\{L_i\}_{i=0}^{5} \in V$.

	If the entries of $M(L_0,\dots,L_5)$ span $V$, then one verifies that
	\[
	\SL(5)\cdot D(L_0,\dots,L_5)=\SL(5)\cdot D(z_0, z_1, z_2, \sum_{i=0}^4 a_i z_i, z_3, z_4),\ \text{ where $a_i \in \CC$}.
	\]
	Suppose not all the $a_i$ are $0$. Then 
	\[
	\SL(5)\cdot D(z_0, z_1, z_2, \sum_{i=0}^4 a_i z_i, z_3, z_4)=\SL(5)\cdot D(z_0, z_1, z_2, z_3, z_3, z_4)=[N_1].
	\]
	If all the $a_i$ are $0$, then 
	$\SL(5)\cdot D(z_0, z_1,z_2, 0, z_3, z_4)=[N_5]$.

	Suppose the dimension of the span of the entries of $M(L_0,\dots,L_5)$ 
	is at most four. 
	Then $D(L_0,\dots,L_5)$ is (a cone over) a net of determinantal quadrics
	in $\PP^3$. Generically, this net cuts out a twisted cubic in $\PP^3$, and so 
	corresponds to $N_2$.
	The remainder of the proposition follows from the analysis of the 
	$\SL(4)$-orbit closure of $[N_2]\in \Grass\bigl(3, \HH^0(\PP^3,\cO_{\PP^3}(2))\bigr)$ performed in \cite{ellingsrud1981variety}.

 \end{proof}

\subsection{Numerical criterion}
\label{S:HM}
Our convention for the Hilbert-Mumford indices are as follows. Given a one-parameter 
subgroup (1-PS) $\rho$ of $\SL(5)$
acting diagonally on a basis $z_0,\dots, z_4$ of $V=\HH^0(\PP^4, \cO(1))$ with weights $w_0, \dots, w_4$, 
the Hilbert-Mumford
index $\mu_\rho(I)$ of $I \in \Grass(k, \Sym^m V)$ will be the sum of $\rho$-weights of the $k$
\emph{leading} (i.e., greatest $\rho$-weight) degree $m$ monomials of $I$ with respect to $\rho$. This also applies 
to the $m^{th}$ Hilbert points $(I_X)_m$ of closed subschemes $X\hra \PP^4$. 

In the case when $X=p=(a_0,\dots,a_4) \in \PP^4$ is a point and $m=1$, this translates into
\[
\mu_{\rho}(p):=-\min \{w_i \mid a_i\neq 0\}=\max\{-w_i \mid a_i\neq 0\}.
\]
We say that the point $(I, p)\in \Grass(4, \Sym^2 V) \times \PP V^{\vee}$ is $\rho$-unstable under linearization
$\cL_{\beta}$ if 
\[
\mu_{\rho}\bigl(I, p\bigr)=\mu_\rho(I)+\beta\mu_{\rho}(p)<0.
\]
The Hilbert-Mumford numerical criterion can then be translated as saying
that $(I,p)\in \bbH(\beta)$ if and only if $\mu_\rho(I)+\beta\mu_{\rho}(p)>0$ for all non-trivial 1-PS $\rho$ of $\SL(5)$. 

Note that if $\rho \subset \Stab\bigl(I, p\bigr)$ is a subgroup of the stabilizer of $(I,p)$, and 
$\rho^{-1}$ is its inverse subgroup, then
\[
\mu_{\rho}\bigl(I, p\bigr)=-\mu_{\rho^{-1}}\bigl(I, p\bigr),
\]
and so the necessary condition for $(I,p)$ to be semistable is that $\mu_{\rho}\bigl(I, p\bigr)=\mu_{\rho^{-1}}\bigl(I, p\bigr)=0$.

\subsection{Preliminary stability results}
\label{S:preliminary}
Consider a point $(I,p) \in \bbH \subset \Grass(4, \Sym^2 V)\times \PP^4$. Since $(I,p)$ lies in the closure
of the locus of H-pairs of smooth pointed genus $2$ curves and a generic such curve lies on
a smooth cubic scroll $S_1$, we conclude that 
there is a dimension $3$ linear subspace $N \subset I$ such that $N\in \overline{\SL(5)\cdot [N_1]}$ as an 
element of $\Grass(3, \Sym^2 V)$.  The orbit closure $\overline{\SL(5)\cdot [N_1]}$ has been described in Proposition \ref{P:sl-orbits}.
With this in mind, we have the following instability result:
 \begin{prop}
 Let $(I, p)\in \bar \bH \subset \Grass(4, \Sym^2 V)\times \PP^4$. 
 We have 
 that 
\begin{enumerate}
\item $(I, p) \notin \bbH(\beta) \ \text{for $\beta<1$, if $N_3 \subset I$,}$
\item $(I, p) \notin \bbH(\beta)  \ \text{for $\beta<2$, if $N_4 \subset I$,}$
\item $(I, p) \notin \bbH(\beta) \ \text{for $\beta<2$, if $N_5\subset I$.}$
\end{enumerate}
 \end{prop}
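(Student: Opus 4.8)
The plan is to apply the Hilbert--Mumford numerical criterion of \S\ref{S:HM} pointwise: given a pair $(I,p)$ with $N_j\subset I$ for the relevant $j\in\{3,4,5\}$, we produce one explicit $1$-PS $\rho$ of $\SL(5)$ for which $\mu_\rho(I)+\beta\mu_\rho(p)<0$ throughout the asserted range of $\beta$. Two structural facts organize the bookkeeping. First, since $\dim I=4$ while each $N_j$ is a net, we may write $I=N_j\oplus\langle Q\rangle$ for a single quadric $Q$; as $\init_\rho(N_j)\subseteq\init_\rho(I)$ and $\init_\rho(I)$ is a four-dimensional monomial subspace, we have $\mu_\rho(I)=\mu_\rho(N_j)+w_\rho(m)$, where $m$ is the single extra leading monomial contributed by $Q$ and $\mu_\rho(N_j)$ denotes the sum of the $\rho$-weights of the leading monomials of the three displayed generators of $N_j$. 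Second, every quadric in $I$ vanishes at $p$ -- this is a closed condition satisfied on $\bH^{sm}$, hence on $\bbH$ -- so $p\in V(N_j)$, and reading off $V(N_j)$ tells us exactly which homogeneous coordinates of $p$ may be nonzero, which bounds $\mu_\rho(p)$ from above.

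The case $N_4=(z_0^2,z_0z_3,z_0z_4)$ is immediate: $V(N_4)\subseteq\{z_0=0\}$, so the $z_0$-coordinate of $p$ vanishes. Take $\rho$ with weights $(-4,1,1,1,1)$ on $z_0,\dots,z_4$. The three generators of $N_4$ have leading weights $-8,-3,-3$, while the extra leading monomial $m$ has $\rho$-weight at most $2$; hence $\mu_\rho(I)\le-12$, and $\mu_\rho(p)=-1$ since $a_0=0$. Therefore $\mu_\rho(I)+\beta\mu_\rho(p)\le-12-\beta<0$ for every $\beta>0$, which in particular covers $\beta<2$.

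For $N_3=(z_0z_3,z_0z_4,z_2z_4-z_3^2)$ and $N_5=(z_0z_4-z_1z_3,z_1z_2,z_2z_4)$ the crude estimate $w_\rho(m)\le2\max_iw_i$ is too lossy, so the argument branches along the irreducible components of $V(N_j)$ that may contain $p$ and along the possible leading monomials of the extra quadric $Q$. For $N_3$, $V(N_3)$ is the union of the plane $\{z_3=z_4=0\}$ and the quadric cone $\{z_0=0,\ z_2z_4=z_3^2\}$, so either the $z_3,z_4$-coordinates of $p$ vanish or its $z_0$-coordinate does; combining this with the constraint that $(I,p)\in\bbH$ and Proposition~\ref{P:sl-orbits} (which places $N_3$ in $\overline{\SL(5)\cdot N_2}$, so that $Q$ is a flat limit of the extra quadrics $z_1^2-z_0z_2-c_3z_2z_3-c_2z_2z_4-c_1z_3z_4-c_0z_4^2$ cutting Weierstrass curves out of $V(N_2)$) leaves only finitely many possibilities for $\init_\rho(Q)$, and in each one chooses weights -- negative on $z_0$ and on one of $z_3,z_4$, nonnegative on the coordinates of $p$ allowed by the component at hand -- making $\mu_\rho(I)+\beta\mu_\rho(p)<0$ for all $\beta<1$. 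For $N_5$, whose $V(N_5)$ splits as $\{z_2=0,\ z_0z_4=z_1z_3\}\cup\{z_1=z_4=0\}$, the same scheme -- now with $N_5\in\overline{\SL(5)\cdot N_1}$ constraining $Q$ -- produces destabilizing $1$-PS's valid for $\beta<2$.

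The genuinely delicate point is exactly this control of the extra quadric $Q$: for $N_3$ and $N_5$ one cannot simply bound $w_\rho(m)$ by $2\max_iw_i$, since that estimate is positive and swamps $\mu_\rho(N_j)$, and one must instead use that $(I,p)\in\bbH$ -- via Proposition~\ref{P:sl-orbits} and the explicit shape of the quadric cutting a bi-log-canonical curve out of its scroll -- to pin down the finitely many possibilities for $\init_\rho(Q)$ before choosing $\rho$. Once this enumeration is in hand, the remaining inequalities in each case are elementary.
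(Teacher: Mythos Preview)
Your $N_4$ case is fine. The genuine gap is in $N_3$ and $N_5$: you assert that ``the crude estimate $w_\rho(m)\le2\max_iw_i$ is too lossy'' and therefore propose a case analysis that you never actually carry out. In fact the crude estimate is \emph{not} too lossy in either case once the $1$-PS is chosen well. For $N_3$ the paper simply takes weights $(-1,1,1,0,-1)$: then $\mu_\rho(N_3)=(-1)+(-2)+0=-3$, the crude bound gives $w_\rho(m)\le 2$, hence $\mu_\rho(I)\le-1$, and $\mu_\rho(p)\le 1$ holds for \emph{every} $p\in\PP^4$, so $\mu_\rho(I)+\beta\mu_\rho(p)\le -1+\beta<0$ for $\beta<1$ with no branching on components of $V(N_3)$ and no constraint on $Q$ whatsoever. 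For $N_5$ the paper does use one extra geometric input, but it is a constraint on $p$, not on $Q$: since on $\bH^{sm}$ the marked point lies on the directrix of the scroll, in the limit $p$ lies on the flat limit of the directrix, which in the displayed coordinates for $N_5$ is the line $z_2=z_3=z_4=0$. With weights $(3,-1,-4,3,-1)$ one gets $\mu_\rho(N_5)=-8$, the crude bound $w_\rho(m)\le 6$, and $\mu_\rho(p)\le 1$ from $p\in\{z_2=z_3=z_4=0\}$, hence $\mu_\rho(I)+\beta\mu_\rho(p)\le-2+\beta<0$ for $\beta<2$. Again the crude bound on $Q$ suffices.

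Your proposed mechanism for constraining $Q$ is also not justified as stated. Proposition~\ref{P:sl-orbits} describes the orbit closure of the \emph{net} $N_1$ in $\Grass(3,\Sym^2 V)$; it says nothing about how the fourth quadric behaves under degeneration, and there is no canonical choice of $Q$ in $I$ to take a flat limit of. In particular, your claim that for $N_3\subset I$ the extra quadric must be a limit of the Weierstrass quadrics $z_1^2-z_0z_2-\cdots$ does not follow: a generic point of $\bbH$ is the H-point of a \emph{non}-Weierstrass curve lying on $S_1$, not on $S_2$, and even when a point of $\bbH$ is approached through Weierstrass curves, the coordinates in which $N_3$ takes its displayed form need not be those in which the approaching $I_t$ decomposes as $N_2+\langle z_1^2-z_0z_2-\cdots\rangle$. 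So the ``finitely many possibilities for $\init_\rho(Q)$'' you allude to are not established, and since you never exhibit the destabilizing subgroups for $N_3$ or $N_5$, those two cases remain unproved.
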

 \begin{proof}
 
 (1) Suppose that in coordinates $\{z_i\}_{i=0}^{4}$, we have that 
 $N_3=(z_0z_3, z_0z_4, z_2z_4-z_3^2) \subset I$. 
 Consider the 1-PS $\rho$ acting on $\{z_i\}_{i=0}^{4}$ with weights 
 $(-1,1,1,0, -1)$. Then
 \begin{align*}
 \mu_\rho(I) &\leq \mu_{\rho}(N_3)+2=(-1+(-2)+0)+2=-1, \\
 \mu_{\rho}(p) &\leq 1.
 \end{align*}
 Hence the pair $(I,p)$ is unstable
 with respect to $\cO_{\Grass}(1)\otimes \cO_{\PP^4}(\beta)$ for $\beta<1$.
 
 (2) Suppose that in coordinates $\{z_i\}_{i=0}^{4}$, 
 we have that $N_4=(z_0^2, z_0z_3, z_0z_4)\subset I$. Consider
the 1-PS $\rho$ acting on these coordinates with weights $(-1, 1,0,0,0)$. 
Then
 \begin{align*}
 \mu_\rho(I) &\leq \mu_{\rho}(N_4)+2=-2, \\
 \mu_{\rho}(p) &\leq 1.
 \end{align*}
 Hence the pair is unstable
 with respect to $\cO_{\Grass}(1)\otimes \cO_{\PP^4}(\beta)$ for $\beta<2$. 
 
(3) Suppose that in coordinates $\{z_i\}_{i=0}^{4}$, we have that $N_5=(z_1z_2, z_0z_4-z_1z_3, z_2z_4)\subset I$. Note that the point $p$ lies on the flat limit of the directrix of $N_1$ in $N_5$, which is
given by the line $z_2=z_3=z_4=0$.
Consider
the 1-PS $\rho$ acting on the coordinates $\{z_i\}_{i=0}^{4}$ with weights $(3, -1, -4 , 3, -1)$. 
Let $Q\in I \setminus N_5$ so that $I=Q+N_5$.
Then
 \begin{align*}
 \mu_\rho(I) &\leq \mu_{\rho}(N_5)+\mu_{\rho}(Q)\leq (-5+2-5)+6=-2, \\
 \mu_{\rho}(p) &\leq 1.
 \end{align*}
 Hence the pair $(I,p)$ is unstable
 with respect to $\cO_{\Grass}(1)\otimes \cO_{\PP^4}(\beta)$ for $\beta<2$. 
 \end{proof}

It follows from the above proposition that:
\begin{corollary}
\label{C:semistable-curves} For $\beta\in (0,1)$,
the semistable locus $\bbH(\beta)$ 
contains only pairs $((I_2)_C, p)$, where $C$ is either  
a quadric section of the smooth cubic scroll $S_1$ or
a quadric section of the singular cubic scroll $S_2$. 
\end{corollary}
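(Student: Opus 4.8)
The plan is to assemble three facts already in hand: (i) the observation opening \S\ref{S:preliminary}, that every $(I,p)\in\bbH$ contains a three-dimensional net of quadrics $N\subset I$ with $[N]\in\overline{\SL(5)\cdot[N_1]}$; (ii) Proposition \ref{P:sl-orbits}, which identifies this orbit closure with the union of the five orbits $\SL(5)\cdot[N_i]$, $i=1,\dots,5$; and (iii) the instability result just proved, according to which $N_i\subset I$ forces $(I,p)\notin\bbH(\beta)$ for $\beta<1$ when $i=3$ and for $\beta<2$ when $i\in\{4,5\}$.

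First I would fix $(I,p)\in\bbH(\beta)$ with $\beta\in(0,1)$ and choose $N\subset I$ as in (i). Since $\bbH(\beta)$ is $\SL(5)$-invariant, after applying a suitable element of $\SL(5)$ we may assume, using (ii), that $N$ equals one of $N_1,\dots,N_5$ exactly. The instability result (iii) then excludes $N=N_3$ (unstable for $\beta<1$) and $N=N_4$, $N=N_5$ (unstable for $\beta<2\supseteq(0,1)$), so $N=N_1$ or $N=N_2$.

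Next I would identify the curve. By \S\ref{S:surfaces-containing-curves}, $N_1$ and $N_2$ are precisely the quadric generators of the homogeneous ideals of $S_1$ and $S_2$ respectively, and since both scrolls are cut out scheme-theoretically by their quadrics, the common zero locus of $N$ is the surface $S\in\{S_1,S_2\}$. As $\dim I=4>3=\dim N$, we may write $I=N\oplus\langle Q\rangle$ with $Q\notin(I_S)_2$; hence $Q$ does not vanish identically on the irreducible (Cohen--Macaulay) surface $S$, and the subscheme $C$ of $\PP^4$ cut out by $I$ equals $S\cap\{Q=0\}$, an effective Cartier divisor on $S$, in particular one-dimensional of degree $6$, i.e.\ a quadric section of $S$. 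Finally, $p$ lies on $C$: this holds for every H-pair of a smooth bi-log-canonical curve, it is a closed condition on $\Grass(4,\Sym^2 V)\times\PP V^\vee$, and so it persists on the Zariski closure $\bbH$. Thus $(I,p)=\bigl((I_C)_2,p\bigr)$ with $C$ a quadric section of $S_1$ or of $S_2$, which is the assertion.

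I do not expect a real difficulty here: the statement is essentially a corollary of the two preceding propositions together with the elementary remark that a quadric not contained in the ideal of an irreducible scroll meets that scroll in a curve. The one point that wants a little care is the passage to the $\SL(5)$-orbit: the destabilizing one-parameter subgroups in the preceding proposition are written in the coordinates adapted to the standard forms of $N_3$, $N_4$, $N_5$, but since $\bbH(\beta)$ is an $\SL(5)$-invariant subvariety this is enough to exclude the whole orbits, hence every pair $(I,p)$ whose net $N$ lies in one of them.
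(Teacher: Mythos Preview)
Your proposal is correct and follows precisely the intended deduction; the paper states this corollary without proof, as an immediate consequence of the preceding proposition together with Proposition \ref{P:sl-orbits} and the observation opening \S\ref{S:preliminary}, and you have simply spelled out the details (including the identification of the resulting scheme as a quadric section of the scroll and the closed condition $p\in C$, which the paper tacitly uses later).
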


From now on, we identify points of $\bbH(\beta)$ with the pointed curves they represent.  
\begin{lemma}
\label{L:planar} 
For $\beta\in (0,1)$, all curves in $\bbH(\beta)$ are locally planar.
\end{lemma}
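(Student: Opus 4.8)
The plan is to combine Corollary~\ref{C:semistable-curves} with a single destabilization argument. By that corollary, for $\beta\in(0,1)$ every curve parameterized by $\bbH(\beta)$ is a quadric section of the smooth cubic scroll $S_1$ or of the singular cubic scroll $S_2$. If $C=S_1\cap Q$, then $C$ is a Cartier divisor on the smooth surface $S_1\simeq\FF_1$: locally at each point it is cut out by one equation in a regular two-dimensional local ring, so it is locally planar, and we are done in this case. Thus the real content lies in the case $C=S_2\cap Q$, where (after an $\SL(5)$-change of coordinates) $(I_{S_2})_2=N_2$ and $S_2$ is the cone over a rational normal cubic; this surface is normal and smooth away from its vertex $v=[0:1:0:0:0]$. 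If $v\notin C$, then $C=(S_2\setminus\{v\})\cap Q$ lies in the smooth locus of $S_2$ and is again a Cartier divisor on a smooth surface, hence locally planar. So it suffices to show that $v\in C$ is incompatible with $(I_C,p)\in\bbH(\beta)$ for $\beta\in(0,1)$.

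For that last step I would produce a destabilizing one-parameter subgroup. The numerical input is: $v\in C$ implies $v\in V\bigl((I_C)_2\bigr)$, so no quadric of $(I_C)_2$ carries a $z_1^2$-term, while $N_2\subset(I_C)_2$ has codimension one since $\dim(I_C)_2=4$. Consider the $1$-PS $\rho$ of $\SL(5)$ acting on $(z_0,\dots,z_4)$ with weights $(-1,4,-1,-1,-1)$. One checks that $\Sym^2 V$ decomposes into $\rho$-eigenspaces of weights $8$ (spanned by $z_1^2$), $3$ (spanned by $z_0z_1,z_1z_2,z_1z_3,z_1z_4$), and $-2$ (the remaining ten monomials), and that $N_2$ lies entirely in the weight $-2$ eigenspace. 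Since $(I_C)_2$ avoids $z_1^2$ and contains the three-dimensional $N_2$ inside the weight $-2$ eigenspace, at most one of the four leading monomials of $(I_C)_2$ can have weight $3$, forcing the other three to weight $-2$; hence $\mu_\rho\bigl((I_C)_2\bigr)\le -3$. On the other hand, any point of $S_2$ other than $v$ has a nonzero coordinate among $z_0,z_2,z_3,z_4$, so $\mu_\rho(p)\le 1$ in all cases (indeed $\mu_\rho(p)=-4$ if $p=v$). Therefore
\[
\mu_\rho(I_C,p)=\mu_\rho\bigl((I_C)_2\bigr)+\beta\,\mu_\rho(p)\le -3+\beta<0\qquad\text{for all }\beta\in(0,1),
\]
so $(I_C,p)$ is $\rho$-unstable and cannot lie in $\bbH(\beta)$. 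This contradiction shows $v\notin C$, and the lemma follows.

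I expect the only subtle point to be the inequality $\mu_\rho\bigl((I_C)_2\bigr)\le -3$: it uses not merely that $N_2\subset(I_C)_2$, but that $N_2$ occupies the lowest $\rho$-weight space while being of codimension one, which is what pins all but one leading monomial of $(I_C)_2$ down to weight $-2$. Everything else is bookkeeping with the quadratic monomials, together with the elementary observations that the vertex of the cone over a rational normal cubic is its unique singular point and that a quadric section of either scroll, being a Cartier divisor on a surface that is smooth along that section once the vertex is excluded, has local rings of embedding dimension at most two.
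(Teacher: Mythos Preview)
Your proof is correct and follows the same approach as the paper: reduce via Corollary~\ref{C:semistable-curves} to ruling out quadric sections of $S_2$ through the vertex, then destabilize with the $1$-PS of weights $(-1,4,-1,-1,-1)$ to obtain $\mu_\rho((I_C)_2)\le -3$ and $\mu_\rho(p)\le 1$. You supply more detail than the paper (e.g., the explicit weight decomposition of $\Sym^2 V$ and the justification that at most one leading monomial can carry weight $3$), but the argument is identical in substance.
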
 
\begin{proof} In view of Corollary \ref{C:semistable-curves}, it suffices to show that $\bbH(\beta)$ 
contains no quadric sections
of $S_2$ passing through the vertex of the cone. Recall that the equation of $S_2$ is 
given by 
\[
N_2=(z_0z_3 -z_2^2, z_0z_4-z_2z_3, z_2z_4-z_3^2).
\]
Suppose that $I=N_2+Q$, where $Q\in I\setminus N_2$, defines a quadric section
passing through the vertex of $S_2$. Then $Q\in (z_0,z_2,z_3, z_4)$. 
Consider the 1-PS $\rho$ acting with weights $(-1,4,-1,-1, -1)$. Then
$\mu_{\rho}(I)\leq -3$, and $\mu_{\rho}(p)\leq 1$.
This shows that $(I,p)$ is unstable for $\beta<3$. 
\end{proof}

\begin{lemma}
\label{L:smooth-point} If $((I_C)_2, p)\in \bbH(\beta)$ for some $\beta\in (1/7, 1)$, 
then $p$ is a smooth point of $C$.
\end{lemma}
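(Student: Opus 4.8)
The plan is to use the Hilbert--Mumford criterion together with the already-established structure theory: by Corollary \ref{C:semistable-curves}, a $\beta$-semistable pair $((I_C)_2,p)$ with $\beta\in(0,1)$ corresponds to a curve $C$ that is a quadric section of either the smooth cubic scroll $S_1$ or the singular cubic scroll $S_2$, and by Lemma \ref{L:planar} (for $\beta<1$) the curve $C$ is locally planar. The strategy is to assume $p$ is a \emph{singular} point of $C$ and exhibit a destabilizing 1-PS $\rho$ of $\SL(5)$ with $\mu_\rho((I_C)_2)+\beta\mu_\rho(p)<0$ for all $\beta>1/7$.

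First I would normalize coordinates. Using the $\SL(5)$-action we may assume $(I_C)_2 \supset N_1$ or $(I_C)_2 \supset N_2$, with the explicit generators \eqref{E:N1}, \eqref{E:N2}. Since $p$ is a singular point of the locally planar curve $C$ (so of delta-invariant $\geq 1$), it is a singular point of the quadric section, and I would use the remaining stabilizer of the scroll to move $p$ into a standard position: for $S_1$ one checks the singular points of a quadric section either lie on the directrix $z_2=z_3=z_4=0$ or can be moved to a fixed point of the form $[0:0:0:0:1]$ or $[0:0:1:0:0]$ by the torus and unipotent part of $\Stab(S_1)$; similarly for $S_2$ one can place the singular point at the vertex $[1:0:0:0:0]$ (which is the case already handled inside Lemma \ref{L:planar}) or along the directrix. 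The key point is that having a singularity at such a special point forces $(I_C)_2$ to contain a quadric $Q$ lying in a small linear-subspace ideal (e.g.\ $Q \in (z_0,z_2,z_3,z_4)^2$ or $Q$ with low weight), just as in the proof of Lemma \ref{L:planar}, because the tangent cone / local equations of the quadric section must vanish to order two at $p$.

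Next I would select the destabilizing 1-PS. The model is the computation in Lemma \ref{L:planar}, where $\rho = \operatorname{diag}(-1,4,-1,-1,-1)$ gives $\mu_\rho(I)\le -3$ and $\mu_\rho(p)\le 1$, hence instability for all $\beta<3$; here I need a $\rho$ that works all the way down to $\beta>1/7$, so I want $\mu_\rho(I) + \tfrac17\mu_\rho(p) \le 0$, i.e.\ roughly $|\mu_\rho(I)| \ge \tfrac17\mu_\rho(p)$. Since $\mu_\rho(p)$ can be made as small as a single coordinate weight while $\mu_\rho((I_C)_2)$ picks up contributions from the four leading monomials of $I$, including the three from $N_1$ or $N_2$ (whose worst-case $\rho$-weights are easy to bound) plus the extra quadric $Q$ forced to have large negative weight by the singularity hypothesis, the required inequality is comfortably satisfied for a suitable integral $\rho$ with weights adapted to the standard position of $p$. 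Concretely I expect a handful of cases (p on the directrix of $S_1$; $p$ a non-directrix singular point of a section of $S_1$; $p$ on the ruling line of $S_2$ through the vertex; $p$ at a non-vertex point of $S_2$), each dispatched by one explicit 1-PS in the spirit of the previous proofs.

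\textbf{Main obstacle.} The delicate part is not the numerical computation but the \emph{reduction to standard position}: one must verify that every singular point of a quadric section of $S_1$ (resp.\ $S_2$) can indeed be moved by the stabilizer of the scroll into one of finitely many normal forms, and in each normal form correctly read off which quadric $Q$ is forced into the ideal and with what $\rho$-weight. This requires a careful local analysis of how a quadric meeting the scroll can acquire a singular point — distinguishing, for instance, a node lying on the directrix from a cusp or tacnode forced by tangency to the directrix — and keeping track of the fact that the scroll's own generators $N_1$ or $N_2$ must remain in the ideal, which constrains the normalizing transformations to $\Stab(S_i)\subset\SL(5)$ rather than all of $\SL(5)$. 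Once the normal forms are pinned down, each instability estimate is a short explicit calculation bounding $\mu_\rho((I_C)_2)$ by the $\rho$-weights of four monomials and $\mu_\rho(p)$ by a single coordinate weight, yielding instability for every $\beta>1/7$.
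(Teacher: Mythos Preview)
Your plan---an exhaustive case analysis by normalizing the position of $p$ on $S_1$ or $S_2$ and producing a destabilizing 1-PS in each case---would work in principle, but the paper takes a much shorter route that completely sidesteps the ``reduction to standard position'' obstacle you flag. The paper handles only the \emph{generic} case: $C$ a quadric section of the standard $S_1$, $p=[1{:}0{:}0{:}0{:}0]$ on the directrix, and $p$ singular on $C$. In that normal form the fourth quadric $Q$ lies in $(z_1,z_2,z_3,z_4)^2$, and the single 1-PS $\rho$ with weights $(7,2,2,-3,-8)$ gives $\mu_\rho((I_C)_2)\le 4+(-1)+(-6)+4=1$ while $\mu_\rho(p)=-7$, so $(C,p)$ is unstable once $\beta>1/7$. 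Because $\bbH$ is by definition the closure of H-points of smooth curves (for which $C$ lies on $S_1$ and $p$ on the directrix), every pair in $\bbH$ with $p$ singular on $C$ is a specialization of this generic case, and its instability follows from openness of the semistable locus. Thus one 1-PS suffices; no enumeration of degenerate positions is needed.

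Your case list also contains some inaccuracies that would complicate an exhaustive approach. For pairs in $\bbH$ with $C$ on $S_1$, the marked point $p$ is \emph{always} on the directrix (by continuity from $\bH^{sm}$), so your ``non-directrix singular point of a section of $S_1$'' case is vacuous. For $S_2$ in the paper's coordinates, the vertex is $[0{:}1{:}0{:}0{:}0]$, not $[1{:}0{:}0{:}0{:}0]$; and Lemma~\ref{L:planar} excludes $C$ \emph{passing through} the vertex, which is not the same as $p$ being at the vertex. None of this is fatal to your strategy, but it shows why the paper's semicontinuity shortcut is the cleaner argument: it eliminates the need to classify or normalize the non-generic configurations at all.
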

\begin{proof}
For $\beta>1/7$, we are going to destabilize the pair $((I_C)_2,p)$ where $C$ is a quadric section of the smooth scroll
meeting the directrix of the scroll in a point $p$ such that $p$ is a singular point of $C$. 
From the above discussion, this is a generic case, and so instability of all other pairs where $p$ is a singular point of $C$ 
will follow.
Assuming that the smooth scroll $S_1$ is cut out by the equations \eqref{E:N1}, that 
$p=[1:0:0:0:0]$ and that the tangent space to $C$ at $p$ is $z_3=z_4=0$, we see that 
the quadrics cutting out $C$ are 
\begin{equation}\label{E:singular-p}
\bigl(z_0z_3 -z_1z_2, z_0z_4-z_1z_3, z_2z_4-z_3^2, z_1^2+g(z_1,z_2,z_3,z_4)\bigr),
\end{equation}
Consider now the 1-PS $\rho$ acting with weights $(7, 2, 2, -3, -8)$ on the chosen coordinates. 
Then the 2nd Hilbert point of $C$ has $\mu_{\rho}((I_C)_2)\leq 4+(-1)+(-6)+4=1$, and $\mu_{\rho}(p)=-7$.  
The claim follows.
\end{proof} 
\begin{remark}[GIT quotients for $\beta<1/7$] Although we will not need this for the purposes of this paper,
one can show that for $\beta<1/7$ the generic  curves given by Equation \eqref{E:singular-p} 
and marked by $p=[1:0:0:0:0]$
are in fact GIT stable. The corresponding $\beta$-stable pairs in the GIT quotient parameterize irreducible 
arithmetic genus $2$ curves 
with a non-separating inner node $p$ and \emph{a choice} 
of a Cartier divisor of degree $2$ supported at $p$. Because linear 
equivalence
classes of such divisors form 
a one-dimensional family, a simple dimension count shows that such $\beta$-stable pairs 
form a codimension one locus in the GIT quotient 
$\bbH(\beta)\gitq \SL(5)$ for $\beta<1/7$. It follows that the rational map $\M_{2,1} \dashrightarrow \bbH(\beta)\gitq \SL(5)$ is not a contraction for $\beta<1/7$. Instead, this map
is a blow up of the codimension two locus of elliptic bridges in $\M_{2,1}$. 
 We conjecture that there 
is a diagram 
\begin{equation}
\begin{gathered}
\xymatrix{
\bbH(1/7-\epsilon)\gitq \SL(5) \ar[d] \ar[rd] &  & \bbH(1/7+\epsilon)\gitq \SL(5) \ar[ld]_{\simeq} \ar[d]^{\simeq} \\
\M_{2,1}(A_2)=\M_{2,1}(9/11-\epsilon) & \bbH(1/7)\gitq \SL(5)  & \M_{2,1}(7/10+\epsilon),
}
\end{gathered}
\end{equation}
where the triangle connecting the GIT quotients is a VGIT wall-crossing at $\beta=1/7$
and the isomorphism $\bbH(1/7+\epsilon)\gitq \SL(5) \simeq \M_{2,1}(7/10+\epsilon)$ is given
by our Theorem \ref{T:maintheorem}(A).

\end{remark}

 \subsection{Stability of smooth non-Weierstrass curves}
 \label{S:smooth}
 In this section, we prove the following stability result:
 \begin{prop}\label{P:smooth-nonW} Let $(C,p)$ be a bi-log-canonically embedded \emph{smooth} curve, 
 where $p$ is not a Weierstrass point.
 Then the H-point $h(C,p)$ is $\beta$-stable for all $\beta \leq 1/3$.
 \end{prop}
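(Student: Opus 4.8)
The plan is to verify the Hilbert–Mumford criterion directly: given a bi-log-canonically embedded smooth non-Weierstrass curve $(C,p)$ and an arbitrary nontrivial 1-PS $\rho$ of $\SL(5)$, we must show $\mu_\rho\bigl(h(C,p)\bigr) = \mu_\rho\bigl((I_C)_2\bigr) + \beta\,\mu_\rho(p) > 0$ for $\beta \leq 1/3$. The key geometric input is Lemma \ref{L:cuspidal-quartic}: since $p$ is not a Weierstrass point, $(C,p)$ has a cuspidal quartic model $\psi(C) \subset \PP^2$, and $C$ lies on the smooth cubic scroll $S_1$ as a quadric section tangent to the directrix at $p$. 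So after acting by $\SL(5)$ we may assume $(I_{S_1})_2 = N_1$ as in \eqref{E:N1}, that $p = [1:0:0:0:0]$ lies on the directrix $z_2=z_3=z_4=0$, and that the fourth quadric generator $Q$ of $(I_C)_2$ has the form dictated by tangency at $p$; concretely $Q \equiv z_1^2 + (\text{terms in } z_2,z_3,z_4) + (\text{lower-weight cross terms})$ modulo $N_1$, and the cuspidal-quartic structure pins down enough of $Q$ that the leading term of $(I_C)_2$ under any $\rho$ can be controlled.

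First I would set up the weight bookkeeping. Normalizing $\rho$ to act diagonally on $z_0,\dots,z_4$ with weights $w_0 \geq w_1 \geq \cdots \geq w_4$ (we may reorder after conjugating, but must then track which coordinate is $p$) and $\sum w_i = 0$, the index $\mu_\rho\bigl((I_C)_2\bigr)$ is the sum of the four largest monomial weights appearing in the ideal. Three of these four quadrics come from the scroll, whose worst-case contribution is computable from $N_1$ directly, and the fourth is $Q$. Because $C$ is a smooth quadric section, the monomial $z_1^2$ (weight $2w_1$) is available in $Q$, but more to the point, after eliminating using $N_1$ the ideal $(I_C)_2$ always contains a quadric with a monomial of reasonably high weight; the cuspidal quartic model shows $C$ is not too degenerate, so the flat limit under $\rho$ cannot be supported on a low-weight coordinate subspace. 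I would reduce to a finite case analysis on the ordering of the weights relative to the distinguished coordinate $z_0$ (= the point $p$), since $\mu_\rho(p) = -w_{i_0}$ where $p$ is the $i_0$-th basis vector, and the sign and size of this term is what we are trading against.

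The main obstacle, and where the $\beta \leq 1/3$ bound actually gets used, is the worst-case 1-PS: the one adapted to the stratification near $p$, i.e., weights roughly proportional to $(2,0,0,-1,\dots)$ or to the destabilizing directions appearing in Lemmas \ref{L:smooth-point} and the $N_3,N_4,N_5$ instability proposition. For those $\rho$, both $\mu_\rho\bigl((I_C)_2\bigr)$ and $\mu_\rho(p)$ are small, and one needs the precise form of $Q$ forced by the curve being a smooth cuspidal quartic (in particular that the cusp contributes a specific monomial, preventing $Q$ from degenerating further) to push $\mu_\rho\bigl((I_C)_2\bigr)$ up by just enough that $\mu_\rho\bigl((I_C)_2\bigr) + \tfrac{1}{3}\mu_\rho(p) > 0$. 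The strategy is thus: (i) enumerate the finitely many maximal destabilizing 1-PS types compatible with $(I_C)_2 \supset N_1$ and $p$ on the directrix, using the state polytope / normalized-weight reduction; (ii) for each, compute the exact worst-case $\mu_\rho\bigl((I_C)_2\bigr)$ using the constrained form of $Q$; (iii) check the inequality with $\beta = 1/3$, which will be tight (or nearly tight) on at most one stratum, confirming that $1/3$ is the natural bound. Monotonicity of $\mu_\rho(I) + \beta\mu_\rho(p)$ in $\beta$ when $\mu_\rho(p) \geq 0$ — which holds here because $p$ on the directrix forces $\mu_\rho(p) = -w_0$ and the destabilizing $\rho$ have $w_0 \leq 0$ in the relevant normalization — then gives stability for all smaller $\beta$ as well, yielding the full range $\beta \leq 1/3$.
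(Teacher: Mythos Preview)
Your approach has a genuine gap at the very first normalization step. You propose to put the curve in standard form (scroll $=N_1$, $p=[1:0:0:0:0]$, fourth quadric $Q$ of a prescribed shape) \emph{and then} assume $\rho$ acts diagonally on those same coordinates. But an arbitrary 1-PS $\rho$ will not be diagonal in the coordinates adapted to the scroll; conjugating to diagonalize $\rho$ destroys the standard form of $N_1$ and of $Q$. So the ``finite case analysis on maximal destabilizing 1-PS types compatible with $(I_C)_2\supset N_1$'' never gets off the ground: once $\rho$ is diagonal, you have no control over which monomials appear in the four quadrics beyond what is forced by coordinate-free geometry. The same problem undercuts your monotonicity paragraph: the formula $\mu_\rho(p)=-w_0$ and the claim that destabilizing $\rho$ have $w_0\le 0$ both presuppose that $p$ is a coordinate point for $\rho$, which need not hold.

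The paper proceeds in the opposite direction: it fixes coordinates in which $\rho$ is diagonal with weights $a\ge b\ge c\ge d\ge e$, and then argues from intrinsic properties of $(C,p)$. The non-Weierstrass hypothesis enters not through the explicit $N_1$-presentation but through a coordinate-free claim: no two quadrics in $(I_C)_2$ share a singular point (otherwise projection from that point would put $C$ on a cone over a twisted cubic, i.e., on $S_2$, forcing $p$ to be Weierstrass). This, together with smoothness of $C$ and the fact that $C$ lies on no rank-$2$ quadric, constrains the lexicographic initial monomials of $(I_C)_2$ enough to bound $\mu_\rho\bigl((I_C)_2\bigr)$ from below by an explicit linear form in $a,\dots,e$. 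The argument then splits on whether the highest-weight point $q=[1:0:\cdots:0]$ lies on $C$, and uses the uniform bound $\beta\,\mu_\rho(p)\ge -a/3$ (valid for all $\beta\le 1/3$, since $\mu_\rho(p)\ge -a$ and $a\ge 0$) rather than any monotonicity argument. You should reorganize along these lines.
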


Let $I_2:=(I_C)_2$ be the linear system of quadrics cutting out the bi-log-canonical embedding of $(C,p)$. 
We begin with a preliminary observation:
\begin{claim}\label{common-sing} No two quadrics in $I_2$ have a common singular point.
\end{claim}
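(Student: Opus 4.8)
The plan is to prove this claim by a direct argument using the geometry of the cubic scroll together with a destabilization estimate, paralleling the earlier preliminary results (Lemmas \ref{L:planar}, \ref{L:smooth-point}).

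First I would reduce to a normal form. By Corollary \ref{C:semistable-curves}, since $h(C,p)$ lies in $\bbH$ and (being a smooth curve) automatically sits on a \emph{smooth} cubic scroll $S_1$, we may assume after an $\SL(5)$-change of coordinates that the net $N_1$ of \eqref{E:N1} is contained in $I_2$, so that $I_2 = N_1 + Q$ for some quadric $Q$ not lying in $N_1$. A quadric in the net $N_1$, written as $a(z_0z_3-z_1z_2)+b(z_0z_4-z_1z_3)+c(z_2z_4-z_3^2)$, has singular locus that one computes directly: a generic such quadric has rank $\geq 4$ on $\PP^4$ and its singular point (if any) is forced to lie on the directrix $z_2=z_3=z_4=0$ of the scroll, or on the complementary ruling structure. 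Concretely, the singular points of the quadrics in $N_1$ sweep out a conic's worth of points, and no two distinct quadrics of $N_1$ share a singular point unless they are proportional — this is the statement that the map from $\PP(N_1)\cong\PP^2$ to (singular points) is injective, which one checks by the determinantal presentation. So the only way two quadrics of $I_2$ could share a singular point is if one of them involves $Q$.

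Next I would handle the case where $Q$ (or a combination $Q + (\text{net element})$) has a singular point in common with a net quadric. The key is that such a configuration is $\SL(5)$-equivalent to a very special one, because we can use the scroll automorphisms $\Aut(S_1)\subset\PGL(5)$ to normalize the common singular point to, say, $p_0=[1:0:0:0:0]$ on the directrix, and then normalize $Q$ modulo $N_1$. Since $p_0$ is a singular point of $z_0z_3-z_1z_2$ and $z_0z_4-z_1z_3$ already among the net quadrics, the remaining degrees of freedom force $Q$ (mod $N_1$) into a form with no $z_0^2$, $z_0z_1$, \dots\ linear-in-$z_0$ terms beyond what is prescribed, i.e.\ $Q\in(z_1,z_2,z_3,z_4)^2$ up to net elements. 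Then I would exhibit a single destabilizing 1-PS: something like $\rho$ with weights of the shape $(\text{large positive}, \dots, \text{negative})$ on $(z_0,\dots,z_4)$ — the natural candidate is the weight vector that makes $z_0$ heavy, e.g.\ proportional to $(4,-1,-1,-1,-1)$ or a scroll-compatible variant — and show $\mu_\rho(I_2)<0$ outright, so that $h(C,p)$ would be \emph{unstable} for every $\beta>0$. This contradicts the hypothesis $h(C,p)\in\bbH$ (which forces semistability for small $\beta$, e.g.\ the fact that smooth curves are generically stable, or more carefully, that $h(C,p)$ is a limit point and hence at least borderline), giving the claim.

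The main obstacle I expect is the bookkeeping in the second case: enumerating, up to the action of $\Aut(S_1)\ltimes(\text{stabilizer of }p_0)$, the possible positions of $Q$ for which a shared singular point can occur, and verifying in each that a \emph{single} explicit 1-PS destabilizes with strictly negative Grassmannian Hilbert--Mumford index (so the instability is $\beta$-independent). The cleanest route is probably to observe that ``$Q$ and some net quadric share a singular point at $p_0\in$ directrix'' forces the $2\times 2$-minors ideal $I_2$ to be contained, after the coordinate normalization, in the ideal generated by $(z_0 z_3, z_0 z_4, z_2 z_4 - z_3^2, z_1^2 + (\text{higher}))$ — i.e.\ essentially the configuration $N_3\subset I$ up to a perturbation — and then invoke (the proof of) Proposition~(1) of \S\ref{S:preliminary} with its 1-PS of weights $(-1,1,1,0,-1)$, which already gives $\mu_\rho(I)<0$. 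If that reduction is not quite tight, a direct weight computation with the $(7,2,2,-3,-8)$-type 1-PS from Lemma \ref{L:smooth-point}, adapted so that the common singular point is the heavy coordinate, should close the remaining cases.
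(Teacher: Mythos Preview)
Your approach has a genuine logical gap. The claim is stated \emph{inside} the proof of Proposition~\ref{P:smooth-nonW}, whose goal is precisely to establish $\beta$-stability of $h(C,p)$ for smooth non-Weierstrass $(C,p)$. You therefore cannot use instability of $h(C,p)$ as a source of contradiction: at this point in the argument we do not yet know that $h(C,p)$ is semistable for any $\beta$. Your parenthetical justification (``$h(C,p)\in\bbH$ forces semistability for small $\beta$'') is not correct either: $\bbH$ is merely the Zariski closure of the locus of $H$-points of smooth curves in $\Grass(4,\Sym^2 V)\times\PP V^\vee$, and it certainly contains unstable points (indeed much of \S\ref{S:preliminary} is devoted to exhibiting such points). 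Nor does ``generic'' stability of smooth curves say anything about the particular $(C,p)$ in hand --- that is exactly what Proposition~\ref{P:smooth-nonW} is meant to prove. So the destabilization strategy, whatever its technical merits, cannot close the argument.

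The paper's proof avoids GIT entirely and is a short piece of projective geometry. If two quadrics in $I_2$ are singular at a common point $v$, then $C$ lies on the cone with vertex $v$ over their intersection in the complementary $\PP^3$. Since $C$ is smooth and cut out scheme-theoretically by $I_2$, it misses $v$, so projection from $v$ realizes $C$ as a finite cover of a non-degenerate integral curve in $\PP^3$ lying on two independent quadrics. The image degree must divide $6$ and be at most $4$ and at least $3$, forcing a twisted cubic; hence $C$ sits on a cone over a twisted cubic, i.e.\ on $S_2$, which by \S\ref{S:weierstrass} means $p$ is a Weierstrass point --- contradicting the standing non-Weierstrass hypothesis of Proposition~\ref{P:smooth-nonW}. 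This is the contradiction you should be aiming for: it uses the non-Weierstrass assumption directly, not any (as-yet-unproved) stability.
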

\begin{proof} Suppose they do. 
Then $C$ lies on a cone over a complete intersection of two quadrics in $\PP^3$, which is a
degree $4$ curve (possibly not integral). 
Since $C$ is a smooth scheme-theoretic intersection of the quadrics in $I_2$, 
it cannot pass through the vertex of the cone. It follows that
the projection of $C$ to $\PP^3$ must be supported on a non-degenerate integral curve whose
degree divides $6$. Since this curve lies on two linearly independent quadrics,  
we conclude that the projection is necessarily a twisted cubic. This contradicts the assumption 
that $(C,p)$ is not a Weierstrass curve.
\end{proof}

 \begin{proof}[{Proof of Proposition \ref{P:smooth-nonW}}]
 We use the following elementary fact repeatedly:
 \begin{claim}\label{C:opt}
 Suppose $a\geq b\geq c\geq d\geq e$ are rational numbers with $a+b+c+d+e=0$. Then 
 \[
 \lambda_1 a+\lambda_2 b+\lambda_3 c+\lambda_4 d+\lambda_5 e \geq 0
 \]
 for all non-negative rational numbers $\{\lambda_i\}_{i=1}^5$ satisfying 
 \[
 \sum_{i=1}^k \lambda_i > \frac{k}{5} \sum_{i=1}^5 \lambda_i \quad \text{for all $k=1,\dots, 4$}.
 \]
 \end{claim}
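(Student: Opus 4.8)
The plan is to reduce the inequality to a finite set of extreme cases by exploiting the polyhedral nature of the hypotheses. The constraints on the $\lambda_i$ — namely $\lambda_i \geq 0$ and $\sum_{i=1}^k \lambda_i > \tfrac{k}{5}\sum_{i=1}^5\lambda_i$ for $k=1,\dots,4$ — cut out a (relatively open) cone in $\RR^5$; since the target expression $\sum \lambda_i w_i$ (writing $w_1=a,\dots,w_5=e$) is linear in $\lambda$, it suffices to check nonnegativity on the closure of this cone, and there it suffices to check at the extreme rays. First I would rewrite the strict inequalities in closed form as $\sum_{i=1}^k\lambda_i \geq \tfrac{k}{5}\sum_{i=1}^5\lambda_i$; by continuity, proving $\geq 0$ on this closed cone gives the strict statement on the open one (in fact the strictness of the hypotheses even yields strict positivity unless all $w_i$ are equal, i.e. all zero). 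Equivalently, set $S=\sum_{i=1}^5\lambda_i$ and $t_k=\sum_{i=1}^k\lambda_i - \tfrac{k}{5}S$ for $k=1,\dots,4$; the hypothesis is $t_k\geq 0$, and one checks $\lambda_1 = t_1 + \tfrac{S}{5}$, $\lambda_{k}=t_k - t_{k-1}+\tfrac{S}{5}$ for $k=2,3,4$, and $\lambda_5 = \tfrac{S}{5}-t_4$, so the cone is spanned (over $\RR_{\geq 0}$) by the vectors corresponding to $(S;t_1,t_2,t_3,t_4)$ equal to $(5;1,1,1,1)$, $(0;1,1,1,0)$ wait — more cleanly, the dual description: the cone is $\{\lambda_i\geq 0,\ t_k\geq 0\}$, a $5$-dimensional cone with $5$ facets, hence $5$ extreme rays.

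Concretely, I would identify the extreme rays as the $\lambda$-vectors (up to positive scaling)
\[
(1,1,1,1,1),\quad (1,1,1,1,0),\quad (1,1,1,0,0),\quad (1,1,0,0,0),\quad (1,0,0,0,0),
\]
which is the natural guess: each is obtained by saturating four of the five inequalities $\{\lambda_i\geq 0\}\cup\{t_k\geq 0\}$. For each of these, $\sum\lambda_i w_i$ equals respectively $a+b+c+d+e=0$, $a+b+c+d=-e\geq 0$ (since $e$ is the smallest of five numbers summing to zero, $e\leq 0$), $a+b+c=-(d+e)\geq 0$, $a+b=-(c+d+e)\geq 0$, and $a\geq 0$; each is $\geq 0$ using only $a\geq b\geq \cdots\geq e$ and $\sum w_i=0$. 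Since every $\lambda$ satisfying the hypotheses is a nonnegative combination of these five rays — this is the one computation that needs to be verified, by solving the linear system above or by a direct inductive argument peeling off $\min_i(\text{active slack})$ times the appropriate ray — linearity of $\sum\lambda_i w_i$ in $\lambda$ finishes the proof.

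The main obstacle, such as it is, is the bookkeeping in verifying that the five listed vectors really are the extreme rays of the cone and that an arbitrary feasible $\lambda$ decomposes as a nonnegative combination of them; I expect this to be entirely routine — either a five-variable linear-algebra check or a short greedy/induction argument writing $\lambda = \sum_{k=1}^5 c_k r_k$ with $r_k = (\underbrace{1,\dots,1}_{k},0,\dots,0)$ and $c_k = \lambda_k - \lambda_{k+1}\geq 0$ for $k<5$, $c_5=\lambda_5\geq 0$, where the inequalities $c_k\geq 0$ for $k=1,\dots,4$ are exactly the hypotheses $\sum_{i\leq k}\lambda_i > \tfrac{k}{5}S$ rearranged. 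Indeed with that decomposition, $\sum_i \lambda_i w_i = \sum_{k=1}^5 c_k\bigl(\sum_{i=1}^k w_i\bigr)$, and each partial sum $\sum_{i=1}^k w_i\geq 0$ because the remaining $5-k$ terms are each $\leq$ the average, i.e. $\leq 0$; hence the whole expression is a sum of products of nonnegatives, proving the claim.
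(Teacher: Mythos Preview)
The paper does not actually prove this claim; it is stated as an ``elementary fact'' and the authors move directly on to the Hilbert--Mumford analysis. So there is no paper proof to compare against, and you are filling in what was left implicit.

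Your argument, however, has a genuine gap at the decisive step. You assert that with $c_k=\lambda_k-\lambda_{k+1}$ the inequalities $c_k\geq 0$ for $k=1,\dots,4$ ``are exactly the hypotheses $\sum_{i\leq k}\lambda_i>\tfrac{k}{5}S$ rearranged.'' This is false: the hypotheses constrain the partial sums $\Lambda_k=\sum_{i\leq k}\lambda_i$, not the successive differences $\lambda_k-\lambda_{k+1}$. For instance $\lambda=(2,3,1,1,1)$ satisfies all four strict inequalities $\Lambda_k>\tfrac{k}{5}S$ (here $S=8$), yet $c_1=\lambda_1-\lambda_2=-1<0$. Equivalently, the rays $r_k=(1,\dots,1,0,\dots,0)$ generate the cone $\{\lambda_1\geq\lambda_2\geq\cdots\geq\lambda_5\geq 0\}$, which is \emph{not} the cone cut out by $\lambda_i\geq 0$ and $t_k\geq 0$; so your extreme-ray identification in the second paragraph is also incorrect.

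The fix is to run Abel summation the other way, pairing the partial sums of the $\lambda_i$ with the successive differences of the $w_i$ (where $w_1=a,\dots,w_5=e$):
\[
\sum_{i=1}^5\lambda_i w_i=\sum_{k=1}^{4}\Lambda_k\,(w_k-w_{k+1})+\Lambda_5\,w_5.
\]
Now $w_k-w_{k+1}\geq 0$ by the ordering hypothesis and $\Lambda_k\geq\tfrac{k}{5}S$ by the partial-sum hypothesis, so the right-hand side is at least
\[
\sum_{k=1}^{4}\tfrac{k}{5}S\,(w_k-w_{k+1})+S\,w_5=\tfrac{S}{5}(w_1+w_2+w_3+w_4+w_5)=0,
\]
as required. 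This is almost certainly the one-line argument the authors had in mind.
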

 We apply the Hilbert-Mumford numerical criterion. Suppose the 1-PS $\rho$ of $\SL(5)$ 
 acts with weights $a\geq b\geq c\geq d\geq e$ on a basis $z_0,\dots, z_4$
 of $\HH^0\bigl(C, \omega_C^2(2p)\bigr)$. 
 By the assumption $\beta\leq 1/3$, we always have
 \[
 \beta \mu_{\rho}(p) \geq -\frac{a}{3}.
 \]
 
 Let now $q$ be the point $z_1=z_2=z_3=z_4=0$, 
 let $\ell$ be the line
 $z_2=z_3=z_4=0$, and let $\pi$ be the plane $z_3=z_4=0$.  
 Let $m_1, m_2, m_3, m_4$ be 
 the initial monomials of the quadrics in $I_2$ with respect to the lexicographic order,
 and $Q_1,Q_2,Q_3,Q_4$ be respectively the generators of $I_2$ with those initial monomials.

Suppose first that $q \notin C$. Then $m_1=z_0^2$.
By Claim \ref{common-sing}, $\dim (I_2 \cap (z_1,z_2,z_3,z_4)^2)\leq 1$, and so 
we must have $m_2\geq z_0z_3$, and $m_3\geq z_0z_4$.
Furthermore, since $C$ does not lie on a rank $2$ quadric, we must 
have either $m_4\geq z_2z_3$, or,  after an appropriate change of basis, $Q_4=z_2z_4+z_3^2$. 
In the former case,
\[
\mu_{\rho}(I_2)\geq a^2+(a+d)+(a+e)+(c+d)=4a+c+2d+e,
\]
and so 
\[
\mu_{\rho}(C,p)= \mu_{\rho}(I_2)+\beta \mu_{\rho}(p)\geq \frac{11}{3} a+c+2d+e >0,
\]
by Claim \ref{C:opt}.
In the latter case, using the fact that no three linearly independent quadrics in $I_2$ 
can contain the plane $\pi$, we see that we must have a $z_2^2$ term in either $Q_2$ or $Q_3$. 
Therefore, 
\[
\mu_{\rho}(I_2)\geq 
2a+\frac{1}{2}((a+d)+2c)+(a+e)+\frac{1}{2}(c+e+2d)=\frac{7}{2}a+\frac{3}{2}c+\frac {3}{2}d+\frac{3}{2}e,
\]
and so 
\[
\mu_{\rho}(C,p)\geq \frac{19}{6}a+\frac{3}{2}c+\frac{3}{2}d+\frac{3}{2}e>0,
\]
by Claim \ref{C:opt}.
This concludes the proof of stability when $q\notin C$.

Suppose now that $q \in C$. Then because the curve $C$ is smooth at $q$, 
we have at least $3$ initial monomials
divisible by $z_0$, namely $m_1 \geq z_0z_2$, $m_2\geq z_0z_3$, and $m_3 \geq z_0z_4$. As before, we must 
have either $m_4\geq z_2z_3$, or $Q_4=z_2z_4+z_3^2$. Summarizing, 
\[
\mu_{\rho}(I_2)\geq (a+d)+(a+c)+(a+e)+\frac{1}{3}(2(c+e)+2d)=3a+\frac{5}{3}(c+d+e), 
\]
and so
\[
\mu_{\rho}(C,p)\geq \frac{8}{3} a+\frac{5}{3}(c+d+e)=a-\frac{5}{3}b.
\]
If $a>\frac{5}{3}b$, we are done. 
Suppose $a\leq \frac{5}{3}b$, and in particular $b>0$. 
Since not all quadrics can contain the line $\ell$, at least one of the quadrics contains 
a term $\geq z_1^2$. It follows that 
\[
\mu_{\rho}(I_2)\geq 2b+(a+c)+(a+e)+\frac{1}{3}((c+e)+4d)=2a+2b+\frac{4}{3}(c+d+e),
\]
and so \[
\mu_{\rho}(C,p)\geq \frac{5}{3}a+2b+\frac{4}{3}(c+d+e)=\frac{1}{3}a+\frac{2}{3}b >0.
\]

\end{proof}

\begin{prop}\label{P:beta-A4-stability}
For $\beta\in (1/7, 1/2]$, 
every $\beta$-stable pair in $\bbH$ is the H-point $h(C,p)$
where $(C,p)$ is a pointed
genus $2$ curve  
with at worst $A_5$-singularities, and no
		nodally or tacnodally attached elliptic tails or nodally attached elliptic bridges.
		Moreover, for $\beta\in (1/7, 1/2)$, the curve $(C,p)$ has at worst $A_4$-singularities
		and hence is $A_4$-stable.
\end{prop}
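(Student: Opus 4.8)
The plan is to deduce the statement from the preliminary results of \S\ref{S:preliminary} together with a short list of Hilbert--Mumford destabilizations. First I would invoke Corollary \ref{C:semistable-curves}: for $\beta\in(0,1)$ every $\beta$-stable pair is of the form $((I_C)_2,p)$ with $C$ a quadric section of the smooth cubic scroll $S_1$ or of the singular cubic scroll $S_2$. By Lemma \ref{L:planar} the curve $C$ avoids the vertex of $S_2$, so $C$ is a locally planar Gorenstein curve; being a quadric section of a cubic scroll, it has the Hilbert polynomial of a nondegenerate degree $6$ curve of arithmetic genus $2$, so $(I_C)_2$ has the expected dimension $4$ and $(I,p)=h(C,p)$ is genuinely the H-point of $C$. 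By Lemma \ref{L:smooth-point}, for $\beta>1/7$ the marked point $p$ is a smooth point of $C$. Thus $(C,p)$ is a pointed genus $2$ curve in the sense of \S\ref{S:geometry-genus-2}, and it remains to exclude the forbidden singularities and attaching configurations.

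The structural input is the ruling of the scroll: it meets $C$ in a length-$2$ divisor, so either $C$ contains a line of the ruling, or the ruling restricts to a finite flat degree $2$ morphism $\kappa\colon C\to\PP^1$. In the latter case $C$ is Zariski-locally a double cover of a smooth curve, i.e. of the form $y^2=f(x)$; consequently every branch of $C$ has multiplicity at most $2$, at most two branches pass through each point, and $C$ has \emph{only} singularities of type $A_n$. Bookkeeping of $\delta$-invariants (in the style of the proof of Proposition \ref{P:3-connected}) shows that an arithmetic genus $2$ curve with only type $A$ singularities has worst singularity $A_5$, realized solely by two smooth rational components meeting with contact order $3$; this settles the ``at worst $A_5$'' claim whenever $\kappa$ is finite. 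When $C$ contains a ruling line $\ell_0$, writing $C=\ell_0+C'$ exhibits $C'$ as an arithmetic genus $1$ curve meeting $\ell_0$ in a length-$2$ divisor, producing a nodally or tacnodally attached elliptic tail or bridge (or a further degeneration thereof), and I would destabilize such pairs directly by a $1$-PS adapted to the flat position of $\ell_0$, in the spirit of the $N_5$ computation in \S\ref{S:preliminary}.

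It then remains to destabilize, over the relevant range of $\beta$, the pairs $h(C,p)$ for which $C$ carries: (i) an $A_5$-singularity (to be excluded for $\beta\in(1/7,1/2)$); (ii) a nodally attached elliptic tail; (iii) a tacnodally attached elliptic tail; or (iv) a nodally attached elliptic bridge (the latter three to be excluded for all $\beta\in(1/7,1/2]$). For each configuration I would place $C$ in normal form on its scroll --- putting the distinguished singular point (or node of attachment) together with its branch directions and with $p$ into coordinate position --- and read off from $N_1$ (or $N_2$) together with one extra quadric enough of $(I_C)_2$ to bound $\mu_\rho\bigl((I_C)_2\bigr)$ from above for a judiciously chosen diagonal $1$-PS $\rho$ of $\SL(5)$, the weights being essentially forced, as in the proof of Proposition \ref{P:smooth-nonW}, by the demand that $\rho$ ``see'' the pathology. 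One then checks $\mu_\rho\bigl((I_C)_2\bigr)+\beta\,\mu_\rho(p)<0$ throughout the stated range; in the elliptic-tail and elliptic-bridge cases these are instability computations of the same kind as those already carried out in \S\ref{S:preliminary}.

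The step I expect to be the main obstacle is case (i). It is a boundary phenomenon: the destabilizing $1$-PS must satisfy $\mu_\rho\bigl((I_C)_2\bigr)+\beta\,\mu_\rho(p)<0$ for every $\beta<1/2$ but degenerate to $=0$ at $\beta=1/2$, consistently with the $A_5$-curve being strictly semistable there (Theorem \ref{T:maintheorem}(D)); this forces one to compute the Hilbert--Mumford index exactly rather than merely estimate it. The sharpness is tied to the fact that the $A_5$-curve carries a one-parameter automorphism group, much like the ramphoid cuspidal atom $C_R$ of \S\ref{S:atom}, which both pins down the destabilizing direction and explains why the threshold is exactly $1/2$; verifying optimality of $\rho$, and that the destabilization holds for \emph{every} position of the $A_5$-point rather than only the generic one, is where the argument must be run carefully. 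A subsidiary point is to confirm that the trichotomy ``$C$ contains a ruling / $C$ has only mild type $A$ singularities / $C$ carries one of (i)--(iv)'' is exhaustive for all pairs in $\bbH$, including possibly non-reduced quadric sections.
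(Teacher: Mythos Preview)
Your outline is correct in principle but takes a substantially longer route than the paper's, because you miss one geometric observation that eliminates most of the work.

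The paper does \emph{not} separately destabilize curves containing a ruling line, nor any of your cases (ii)--(iv). Having established (via Corollary~\ref{C:semistable-curves}, Lemma~\ref{L:planar}, and Lemma~\ref{L:smooth-point}) that every $\beta$-stable pair for $\beta\in(1/7,1)$ is a quadric section $C$ of $S_1$ or $S_2$, not through the vertex of $S_2$, with $p\in C$ smooth, the paper invokes one further fact that comes for free from the definition of $\bbH$ as a closure of $\bH^{sm}$: the curve $C$ is \emph{tangent} to the directrix of $S_1$ (respectively, to a ruling of $S_2$) at $p$. This tangency at a smooth point already forces $C$ to contain no line of the ruling. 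For $S_2$ this is immediate, since every ruling passes through the vertex. For $S_1$, if $C=F+C'$ with $F$ a ruling line, then $F\cdot E=C'\cdot E=1$, so $C$ could only meet $E$ with multiplicity $2$ at a point lying on both $F$ and $C'$; but such a point is singular on $C$. The tangency is also what makes the adjunction $\omega_C^2(2p)\simeq\cO_{S_i}(1)\vert_C$ go through, so that the embedding is genuinely bi-log-canonical --- a point your write-up glosses over.

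Once $C$ contains no ruling line, the ruling restricts to a finite flat degree~$2$ morphism $C\to\PP^1$, and your own reasoning finishes the structural part: all singularities are of type $A$, at worst $A_5$, and every component of $C$ maps with degree $1$ to $\PP^1$, hence is rational. Thus $C$ has no elliptic subcurves at all, so your cases (ii)--(iv) are vacuous and the ``$C$ contains a ruling'' destabilization is unnecessary. The \emph{only} Hilbert--Mumford computation needed beyond \S\ref{S:preliminary} is the $A_5$ case, and you overestimate its difficulty: an $A_5$-singularity forces $C$ to be two $\PP^1$'s meeting at a single $A_5$-point, so there is a unique isomorphism class and hence a single $\SL(5)$-orbit. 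The paper writes down its ideal, notes that it is fixed by the $1$-PS with weights $(-2,-1,0,1,2)$ (this is $\Aut(C,p)^\circ\simeq\GG_m$ acting), and reads off the Hilbert--Mumford index as $-1+2\beta$. No optimality argument and no case analysis over ``positions of the $A_5$-point'' is required.
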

\begin{proof}
By Corollary \ref{C:semistable-curves} and Lemma \ref{L:planar}, for $\beta\in (0,1)$
every $\beta$-stable pair has form $((I_C)_2,p)$ where $C$ is a quadric 
section of the smooth cubic scroll $S_1$ or a quadric section of the singular cubic scroll $S_2$, 
not passing through the vertex. Moreover, for $\beta>1/7$,  Lemma \ref{L:smooth-point} implies that
$C$ is tangent to the
directrix of $S_1$ (resp., to a ruling of $S_2$) at a smooth point $p$. 
The adjunction formula now gives that 
\[
\omega^2_C(2p)=\cO_{S_i}(1)\vert_{C},
\]
and so $(C,p)$ is a bi-log-canonically embedded locally planar genus $2$ pointed curve.

The ruling of the scroll $S_i$ cuts out a complete linear system 
$\vert \omega_C\vert$ that defines a finite flat degree $2$ 
morphism to $\PP^1$. It follows that the singularities of $C$ are of type $A$, 
and hence at worst $A_5$ by genus considerations.  It also follows
that $C$ is either irreducible or has two rational
components. 

Assume that $\beta\in (1/7, 1/2)$. To prove that $C$ is $A_4$-stable, 
it remains to show that $C$ has no $A_5$-singularities. Suppose to the contrary. 
Then by genus considerations, 
$C$ is a union of two rational components meeting in a single $A_5$ singularity.  Reducible 
$\beta$-stable curves cannot lie on $S_2$ because such curves cannot be tangent
to a ruling at a smooth point. Therefore, $C$ must lie on $S_1$. 
In this case, the plane quartic model of $(C,p)$ is a union of a cuspidal cubic 
and a flex line, given by the equation $z(zy^2-x^3)=0$. The equations
cutting out $C$ in $\PP^4$ are then given by
\begin{equation}\label{A_5-curve}
I_{A_5}:=(z_0z_3 -z_1z_2, z_2z_4-z_3^2, z_0z_4-z_1z_3, z_1^2-z_0z_2),
\end{equation}
while the marked point $p$ is given by $z_1=z_2=z_3=z_4=0$.

We note that $I_{A_5}$ is fixed by the 1-PS of $\SL(5)$ acting on 
$z_0,\dots, z_4$ with weights $(-2,-1,0,1,2)$. The generators of $I_{A_5}$ 
as listed in \eqref{A_5-curve}
have weights $-1, 2, 0, -2$,
respectively. Recalling that $p=[1:0:0:0:0]$, we see that under the linearization $\cL_{\beta}$ the Hilbert-Mumford index of 
$(I_{A_5}, p)$ with respect to this one-parameter subgroup is $-1+2\beta$.  
The pair $(C,p)$ is then destabilized by the 1-PS $(-2,-1,0,1,2)$ for $\beta<1/2$, 
and so we are done.

\end{proof}
We will call the curve described by Equation \eqref{A_5-curve} the \emph{$A_5$-curve}.  As the following lemma shows,
every $A_4^{non-W}$-stable curve isotrivially specializes to the $A_5$-curve. 
\begin{lemma}
\label{L:A5-curve}
 The $\SL(5)$-orbit closure in $\bbH$ of every $A_4^{non-W}$-stable curve contains the orbit of the $A_5$-curve.
Furthermore, for $\beta\in(1/2,1)$, we have $\bbH(\beta)=\varnothing$.
\end{lemma}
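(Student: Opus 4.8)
The plan is to prove the two assertions separately, both times using the dictionary of \S\ref{S:bicanonical}--\S\ref{S:surfaces-containing-curves} between quadric sections of the cubic scroll $S_1$ and plane quartics with a prescribed $A_2$-cusp, together with the two explicit one-parameter subgroups already singled out in \S\ref{S:atom} and in the proof of Proposition \ref{P:beta-A4-stability}.

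\emph{First assertion.} Let $(C,p)$ be $A_4^{non-W}$-stable. Since $p$ is not a Weierstrass point, $C$ lies on $S_1=V(N_1)$ (with $N_1$ as in \eqref{E:N1}) as a quadric section tangent to the directrix at $p=[1:0:0:0:0]$, and by Lemma \ref{L:cuspidal-quartic} the morphism $|\omega_C(2p)|$ carries $C$ onto a plane quartic with an honest $A_2$-cusp at $q:=\psi(p)$. First I would choose coordinates on $\PP^2$ so that $q=[0:0:1]$ with cuspidal tangent line $\{y=0\}$; then, via the scroll parametrization \eqref{scroll-embedding}, the quotient $\Sym^2 V/N_1=\HH^0(S_1,\cO_{S_1}(2))$ is identified with $\HH^0\bigl(\PP^2,\cO(4)\otimes\mathfrak{m}_q^2\bigr)$ so that $z_1^2\mapsto y^2z^2$ and $z_0z_2\mapsto x^3z$. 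Under this identification the quartic of $(C,p)$ takes the form $y^2z^2+c_3(x,y)z+c_4(x,y)$, in which the coefficient $\gamma_3$ of $x^3z$ is nonzero precisely because the cusp is an $A_2$ and nothing worse, while the $A_5$-curve \eqref{A_5-curve} corresponds to $z(zy^2-x^3)=z_1^2-z_0z_2$. Now I would take $\rho$ to be the $1$-PS with $\SL(5)$-weights $(-2,-1,0,1,2)$ on $(z_0,\dots,z_4)$, i.e.\ weights $(0,1,-2)$ on $(x,y,z)$: it fixes $p$ and preserves $N_1$, and among the monomials of $y^2z^2+c_3z+c_4$ the minimal $\rho$-weight, namely $-2$, is attained exactly by $y^2z^2$ and $x^3z$. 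Hence
\[
\lim_{t\to 0}\rho(t)\cdot h(C,p)=\bigl(N_1+\langle z_1^2+\gamma_3 z_0z_2\rangle,\ [1:0:0:0:0]\bigr),
\]
which, after a diagonal rescaling of $x$, is the $H$-point of the $A_5$-curve. Thus $\overline{\SL(5)\cdot h(C,p)}$ contains the orbit of the $A_5$-curve, and in particular the latter lies in $\bbH$.

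\emph{Second assertion.} Fix $\beta\in(1/2,1)$. By Corollary \ref{C:semistable-curves}, Lemma \ref{L:planar}, and Lemma \ref{L:smooth-point} (and the argument in the proof of Proposition \ref{P:beta-A4-stability}, which is valid throughout $\beta\in(1/7,1)$), every $\beta$-stable pair is the $H$-point of a curve $C$ that is a quadric section, avoiding the vertex, of either $S_1$ or $S_2=V(N_2)$, tangent to the directrix (resp.\ to a ruling) at the smooth point $p=[1:0:0:0:0]$. If $C$ lies on $S_1$, the planar normal form $y^2z^2+c_3z+c_4$ above (available for any such $C$, see below) gives $(I_C)_2=N_1+\langle Q\rangle$ where, for $\rho=(2,1,0,-1,-2)$, the generators of $N_1$ are eigenvectors of weights $1,0,-2$ and $Q$ has top $\rho$-weight $2$; the standard bound $\mu_\rho(\langle v_1,\dots,v_4\rangle)\le\sum_i w_\rho(v_i)$ then yields $\mu_\rho((I_C)_2)\le 1$, while $\mu_\rho(p)=-2$, so $\mu_\rho(h(C,p))\le 1-2\beta<0$. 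If $C$ lies on $S_2$, then $C$ is honestly hyperelliptic with $p$ a Weierstrass point, so $(I_C)_2$ has the shape \eqref{E:Weierstrass}; here I would take $\rho$ to be the $1$-PS with $\SL(5)$-weights $(13,8,3,-7,-17)$, i.e.\ the stabilizer of the ramphoid cuspidal atom from \S\ref{S:atom}: the generators of $N_2$ are eigenvectors of weights $6,-4,-14$ and the remaining generator has top weight $16$, whence $\mu_\rho((I_C)_2)\le 4$, $\mu_\rho(p)=-13$, and $\mu_\rho(h(C,p))\le 4-13\beta<0$ because $\beta>4/13$. In either case the pair is unstable, so $\bbH(\beta)=\varnothing$ for $\beta\in(1/2,1)$.

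The hard part will be uniformity: I must verify that the cuspidal-quartic normal form $y^2z^2+c_3(x,y)z+c_4(x,y)$ — and hence the top-weight bounds used above — holds for \emph{every} quadric section of $S_1$ tangent to the directrix at a smooth point, not merely for curves already known to be $A_4^{non-W}$-stable. This will follow from a local blow-down computation on $\FF_1=\Bl_q\PP^2$: a smooth point of such a curve lying on the exceptional divisor with contact order exactly two maps, under the contraction of $E$, to an honest $A_2$-cusp of the planar model, which pins down the degree-two part of the affine equation to be the square of the cuspidal tangent line. Everything else is a routine Hilbert--Mumford weight count with the two explicit $1$-PS's, plus the bookkeeping that $N_1$ (resp.\ $N_2$) is preserved by the relevant $1$-PS and that $p=[1:0:0:0:0]$ is a fixed point.
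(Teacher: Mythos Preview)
Your argument is correct and takes essentially the same route as the paper. Both use the one-parameter subgroup with weights $\pm(2,1,0,-1,-2)$: the paper simply writes the fourth quadric directly in $\PP^4$ as $z_1^2-z_0z_2+Q$ with $Q\in(z_2,z_3,z_4)$, reads off the exact Hilbert--Mumford index $1-2\beta$, and observes that the flat limit is the $A_5$-curve; your plane-quartic formulation is the same computation in different coordinates. The one substantive difference is in the emptiness claim: the paper treats only curves on $S_1$, implicitly relying on the fact that such curves are dense in $\bbH$ together with openness of the semistable locus, whereas you handle the $S_2$ case separately via the $1$-PS $(13,8,3,-7,-17)$ (which the paper only introduces later, in Lemma~\ref{L:W-unstable}). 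Finally, your ``hard part'' is not actually hard: for the instability bound on $S_1$ you only need that $Q$ has no $z_0^2$ or $z_0z_1$ term, which is exactly the condition that $C$ meets the directrix to order two at $p$, and for the first assertion the honest-$A_2$ normal form is supplied by Lemma~\ref{L:cuspidal-quartic}.
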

\begin{proof} It suffices to show that all quadric sections of the smooth cubic scroll $S_1$ that
are simply tangent to the directrix $z_2=z_3=z_4=0$ isotrivially specialize to the $A_5$-curve and 
are unstable for $\beta>1/2$. Assuming that
$p=[1:0:0:0:0]$, the equation 
of such quadric section will necessarily be of the form
\[
(z_0z_3 -z_1z_2, z_2z_4-z_3^2, z_0z_4-z_1z_3, z_1^2-z_0z_2+Q(z_1,\dots,z_5)),
\]
where $Q(z_1,\dots,z_5) \in (z_2,z_3,z_4)$. Per our conventions from \S\ref{S:HM},
 under the linearization $\cL_{\beta}$, the Hilbert-Mumford index of the H-point of $(C,p)$ 
with respect to the one-parameter subgroup 
$(2,1,0,-1,-2)$ is $1-2\beta$.  Hence this H-point is unstable once $\beta>1/2$. 
The flat limit of $(C,p)$ under the above one-parameter subgroup is the $A_5$-curve, marked by $[1:0:0:0:0]$.
\end{proof}

Next we prove that $\beta=4/13$ and $\beta=1/2$ are the only values in the interval $(1/7, 1/2]$ at which 
the semistable loci could change.  This will prove that $\bbH(\beta)$ is constant for $\beta\in (1/7, 4/13)$ and for
$\beta\in (4/13, 1/2)$.
\begin{lemma}\label{L:Gm-action} Suppose $(C,p)$ is a $\beta$-semistable curve for 
some $\beta \in (1/7, 1/2]$ with $\GG_m$-action.
Then either $\beta=4/13$ and $(C,p)=(C_{R}, \infty)$ is the ramphoid cuspidal atom (cf. \S\ref{S:atom}), or $\beta=1/2$ and $(C,p)$ is the $A_5$-curve
given by Equation \eqref{A_5-curve} and marked by $[1:0:0:0:0]$.
\end{lemma}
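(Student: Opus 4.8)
The plan is to first cut $(C,p)$ down to a very rigid form using the stability results already proved, then classify pointed genus $2$ curves with $A$-singularities carrying a nontrivial $\GG_m$-action by examining the branch divisor of their canonical double cover, and finally to apply the numerical criterion to the distinguished $1$-PS in the two surviving cases.

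By Corollary \ref{C:semistable-curves}, Lemma \ref{L:planar}, Lemma \ref{L:smooth-point}, and Proposition \ref{P:beta-A4-stability} (together with its proof), a $\beta$-semistable point for $\beta\in(1/7,1/2]$ is the H-point $h(C,p)$ of a bi-log-canonically embedded, locally planar, honestly hyperelliptic pointed genus $2$ curve $(C,p)$ with at worst $A_5$-singularities, tangent at the smooth point $p$ to the directrix of $S_1$ or to a ruling of $S_2$. In particular $\kappa_C\colon C\to\PP^1$ is finite flat of degree $2$ with branch divisor $B\subset\PP^1$ of degree $6$, and a point of multiplicity $m$ in $B$ carries an $A_{m-1}$-singularity of $C$ (with $m=1$ giving a smooth ramification point and $m=0$ two smooth points). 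A nontrivial $\GG_m$-action on $(C,p)$ acts on $V=\HH^0\bigl(C,\omega_C^2(2p)\bigr)$, hence, after twisting by a character so as to land in $\SL(V)$ (which does not change the induced point of $\Grass(4,\Sym^2V)\times\PP V^\vee$), yields a nontrivial $1$-PS $\rho$ of $\SL(5)$ contained in $\Stab\bigl(h(C,p)\bigr)$. Since $\kappa_C$ is canonical, the $\GG_m$-action descends to $\PP^1$; it cannot be trivial there, for then $\GG_m$ would act through the hyperelliptic involution and hence trivially. So $\GG_m$ acts on $\PP^1$ with exactly two fixed points, which we call $0$ and $\infty$.

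Now $B$ is $\GG_m$-invariant, so $B=a[0]+b[\infty]$ with $a+b=6$ and $0\le a,b\le 6$ (the upper bound coming from the $A_5$-condition). The marked point $p$ is a $\GG_m$-fixed smooth point of $C$; over a point of $B$-multiplicity at least $2$ the singular point of $C$ is the only $\GG_m$-fixed point, so $p$ lies over a point of multiplicity $0$ or $1$. This excludes $(a,b)\in\{(2,4),(3,3),(4,2)\}$. If $\{a,b\}=\{1,5\}$, then $C$ is irreducible with a single $A_4$-singularity and $p$ is its unique smooth ramification point, hence a Weierstrass point; comparing with Equation \eqref{affine-Weierstrass} the $\GG_m$-action forces $f=x^5$, so $(C,p)$ is the ramphoid cuspidal atom $(C_R,\infty)$ of \S\ref{S:atom}, with H-point \eqref{E:RamphoidCusp}. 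If $\{a,b\}=\{0,6\}$, then $C$ is a union of two smooth rational curves meeting at an $A_5$-singularity with $p$ a smooth point on one component; such a pair is unique up to isomorphism, and so $(C,p)$ is the $A_5$-curve \eqref{A_5-curve}. Finally, as $\rho\subset\Stab\bigl(h(C,p)\bigr)$, the relation $\mu_\rho=-\mu_{\rho^{-1}}$ from \S\ref{S:HM} and $\beta$-semistability force $\mu_\rho\bigl(h(C,p)\bigr)=0$. For $(C_R,\infty)$ the $1$-PS $\rho$ acts on $z_0,\dots,z_4$ with weights $(13,8,3,-7,-17)$ (see \S\ref{S:atom}); the four generators of \eqref{E:RamphoidCusp} are $\rho$-eigenvectors of weights $6,-4,-14,16$ and $\mu_\rho(p)=-13$, so $\mu_\rho\bigl(h(C_R,\infty)\bigr)=4-13\beta$, which vanishes precisely when $\beta=4/13$. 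For the $A_5$-curve $\rho$ acts with weights $(-2,-1,0,1,2)$; the generators of \eqref{A_5-curve} are eigenvectors of weights $-1,2,0,-2$ and $\mu_\rho(p)=2$, so $\mu_\rho=-1+2\beta$, vanishing precisely when $\beta=1/2$. This is exactly the claimed dichotomy.

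The main obstacle is ensuring the classification is complete: one must trust the reduction of the earlier steps (in particular that $\beta$-semistable curves are honestly hyperelliptic, so their branch divisors are $\GG_m$-invariant and hence supported on the two fixed points), and must pin down the two surviving branch types $\{1,5\}$ and $\{0,6\}$ as $C_R$ and the $A_5$-curve up to the $\SL(5)$-action, not merely as abstract pointed curves. The concluding Hilbert--Mumford bookkeeping is then routine, and the relevant $1$-PS's have already been written down in the paper.
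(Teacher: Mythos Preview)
Your proof is correct and follows essentially the same approach as the paper: reduce to an honestly hyperelliptic curve via the earlier stability results, use that the $\GG_m$-action descends nontrivially to $\PP^1$ so that the branch divisor is supported on the two fixed points, identify the two surviving multiplicity profiles $\{1,5\}$ and $\{0,6\}$ as $(C_R,\infty)$ and the $A_5$-curve, and then apply the numerical criterion to the distinguished $1$-PS in each case. Your writeup is more explicit than the paper's (you spell out why the induced action on $\PP^1$ is nontrivial and enumerate the excluded multiplicity types $(2,4)$ and $(3,3)$), but the underlying argument is the same.
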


\begin{proof} By Proposition \ref{P:beta-A4-stability} and Corollary \ref{C:omega}, 
for $\beta\in (1/7, 1/2]$, every $\beta$-semistable
curve is honestly hyperelliptic.
If $\kappa_C\colon C\to \PP^1$ is the canonical morphism, then the branch locus of $\kappa_C$
is $\GG_m$-invariant. Since $p$ is a smooth point of $C$ and $\kappa_C(p)$ is also $\GG_m$-invariant, 
we conclude that 
the branch locus consists either of $\kappa_C(p)$ and another point of multiplicity $5$, or of a single point of multiplicity $6$, which is distinct from $p$.
In the former case, we obtain
$(C,p) \simeq (C_{R}, \infty)$ as given by \eqref{E:RamphoidCusp} 
and in the latter case, we obtain the $A_5$-curve.

Recall from \S\ref{S:atom} (whose notation we keep) that 
$I_{C_R}$ is fixed by the 1-PS acting on 
$z_0,\dots, z_4$ with weights $(13, 8, 3, -7, -17)$, and that the generators of 
$I_{C_R}$ have weights $6, -4, -14, 16$,
respectively. Since $\infty=[1:0:0:0:0]$, we conclude that under linearization 
$\cL_{\beta}$ the Hilbert-Mumford index of 
$(I_{C_R}, \infty)$ with respect to this 1-PS is 
\[
(6+(-4)+(-14)+16)-13\beta=4-13\beta.  
\]
This shows that $(C_R, \infty)$ can be semistable
only for $\beta=4/13$.

Similarly, we saw in the proof of Proposition \ref{A_5-curve} that the $A_5$-curve $(C,p)$
is fixed by a 1-PS of $\SL(5)$ with respect to which the Hilbert-Mumford index of $(C,p)$
is $2\beta-1$ under the linearization $\cL_{\beta}$.  This shows that $(C, p)$ can be semistable
only for $\beta=1/2$.

\end{proof}
\begin{remark} Note that we do not yet claim that $(C_R, \infty)$ is semistable at $\beta=4/13$.
This will be proved later in Lemma \ref{L:rampcusp-413}.
\end{remark}

\subsection{Proof of Theorem \ref{T:maintheorem} Part (C)}
\label{part-C}
Suppose $\beta\in (4/13, 1/2)$.  
 We begin by showing that Weierstrass curves are destabilized for $\beta>4/13$.
 \begin{lemma}\label{L:W-unstable} Every Weierstrass curve isotrivially degenerates to the ramphoid cuspidal atom $(C_R, \infty)$, and is 
unstable with respect to $\cO_{\Grass}(1)\otimes \cO_{\PP^4}(\beta)$ for $\beta>4/13$.
 \end{lemma}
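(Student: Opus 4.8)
The plan is to destabilize every Weierstrass curve by a single one-parameter subgroup --- the connected automorphism group of the ramphoid cuspidal atom $(C_R,\infty)$ --- and to read off the asserted isotrivial degeneration as the corresponding flat limit. The point that makes this work is that, as recorded in \S\ref{S:atom}, $\GG_m\simeq\Aut(C_R,\infty)^{\circ}$ acts on $\Def^1(A_4)$ with the strictly negative weights $(-4,-6,-8,-10)$, so this one-parameter subgroup uniformly contracts all of the deformation parameters that distinguish a Weierstrass curve from $(C_R,\infty)$.

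Concretely, I would first reduce to a normal form: if $(C,p)$ is a Weierstrass curve, then by \S\ref{S:weierstrass} (which rests on Corollary \ref{C:omega}), after an $\SL(5)$-coordinate change the H-point $h(C,p)$ equals $\bigl((I_C)_2,p\bigr)$ with $(I_C)_2$ spanned by the four quadrics of \eqref{E:Weierstrass} and $p=[1:0:0:0:0]$; since instability is $\SL(5)$-invariant, it suffices to destabilize this normal form. Now take $\rho$ to be the $1$-PS of $\SL(5)$ acting on $z_0,\dots,z_4$ with weights $(13,8,3,-7,-17)$, the $\SL(5)$-normalization of $\Aut(C_R,\infty)^{\circ}$ from \S\ref{S:atom}. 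Inspecting \eqref{E:Weierstrass}: the three scroll quadrics $z_0z_3-z_2^2$, $z_0z_4-z_2z_3$, $z_2z_4-z_3^2$ are $\rho$-eigenvectors of weights $6,-4,-14$, while in the quartic generator the monomials $z_2z_3,z_2z_4,z_3z_4,z_4^2$ have $\rho$-weights $-4,-14,-24,-34$, all strictly below the common weight $16$ of $z_1^2$ and $z_0z_2$. Hence the flat limit $\lim_{t\to\infty}\rho(t)\cdot(I_C)_2$ equals $(I_{C_R})_2$, while $\rho(t)$ fixes $p$, so $\lim_{t\to\infty}\rho(t)\cdot h(C,p)=h(C_R,\infty)$. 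As $\rho(t)\in\SL(5)$ and $\rho(t)\cdot(C,p)\cong(C,p)$ for $t\neq 0$, this is precisely the claimed isotrivial degeneration of $(C,p)$ to $(C_R,\infty)$.

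Since the H-point $h(C_R,\infty)$ is a $\rho$-fixed point realized as $\lim_{t\to\infty}\rho(t)\cdot h(C,p)$, the Hilbert--Mumford index of $h(C,p)$ along $\rho$ equals that of $h(C_R,\infty)$, which --- exactly as in \S\ref{S:atom} and in the proof of Lemma \ref{L:Gm-action} --- is $\bigl(6+(-4)+(-14)+16\bigr)+\beta\cdot(-13)=4-13\beta$. Thus $\mu_\rho\bigl(h(C,p)\bigr)<0$ for every $\beta>4/13$, so $h(C,p)\notin\bbH(\beta)$ and $(C,p)$ is $\cL_\beta$-unstable, as claimed. The only step I expect to require genuine care is the identification of the flat limit, i.e., checking that the parameters $c_0,\dots,c_3$ of \eqref{E:Weierstrass} enter only in monomials of $\rho$-weight $<16$ and are therefore killed uniformly in the limit regardless of the curve; but, as noted above, this is just the negativity of the $\GG_m$-weights on $\Def^1(A_4)$, and the remainder of the argument is formal.
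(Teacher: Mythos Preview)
Your proof is correct and follows essentially the same approach as the paper: both use the one-parameter subgroup $\rho$ acting with weights $(13,8,3,-7,-17)$, identify the flat limit as $(C_R,\infty)$, and compute the Hilbert--Mumford index as $4-13\beta$. Your version adds the explicit check that the deformation monomials $z_2z_3,z_2z_4,z_3z_4,z_4^2$ have $\rho$-weights below $16$ and the conceptual link to the negative $\GG_m$-weights on $\Def^1(A_4)$, but these elaborate rather than depart from the paper's argument.
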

 
 \begin{proof}
 Recall from \S\ref{S:weierstrass} that every Weierstrass curve $(C,p)$ is given by the equations 
 \begin{align*}
 I_C &=(z_0z_3 -z_2^2, z_0z_4-z_2z_3, z_2z_4-z_3^2, 
 z_1^2-z_0z_2-c_3z_2z_3-c_2z_2z_4-c_1z_3z_4-c_0z_4^2),
 \\
 p&=[1:0:0:0:0], \notag
 \end{align*}
 after an appropriate choice of a basis $z_0, \dots z_4$ of $\HH^0\bigl(C,\omega_C^2(2p)\bigr)$. 
 Let $\rho(t)$ be the one-parameter subgroup of $\SL(5)$ acting as 
 \[
 \rho(t) \cdot (z_0, \dots z_4)=(t^{13}z_0, t^{8} z_1, t^{3} z_2, t^{-7} z_3, t^{-17} z_4).
 \] 
  Then the flat limit of $\rho(t)\cdot I_C$ as $t\to \infty$ is precisely $I_{C_R}$. Since
 $[1:0:0:0:0]$ is fixed by $\rho$, the first claim follows. 
 
Next, the leading monomials of 
$(I_C)_2$
have $\rho$-weights $6$, $-4$, $-14$, $16$, so that $\mu_\rho((I_C)_2)=4$. On the other hand, $\mu_\rho(p)=-13$.
The instability claim follows.
 \end{proof}

By Proposition \ref{P:beta-A4-stability},
every $\beta$-stable pair $(C,p)$ is $A_4$-stable. Applying Lemma \ref{L:W-unstable}, 
we see that $(C,p)$ is in fact $A_4^{non-W}$-stable.

By Proposition \ref{P:smooth-nonW}, all smooth non-Weierstrass 
curves $(C,p)$ are $\beta$-stable for $\beta \leq 1/3$. Since the semistable locus is constant
for $\beta\in (4/13, 1/2)$, we conclude that  all \emph{smooth} non-Weierstrass pointed genus $2$ curves
are in $\bbH(4/13, 1/2)$. 

To complete the proof of Theorem \ref{T:maintheorem} (C),
it remains to show that \emph{singular} $A_4^{non-W}$-stable curves are also
$\beta$-stable for $\beta\in (4/13, 1/2)$.  For this, we make use of the GIT stack of semistable plane quartics. Recall
from \cite{GIT} that a plane quartic $D$ is semistable with respect to the 
standard $\PGL(3)$-action if and only if $D$ has no multiplicity three singularities,
and $D$ is not a union of a plane cubic and its flex line.  Moreover, $D$ is strictly semistable if and only if $D$ has a tacnode, 
or is a double conic. A strictly semistable quartic with a closed orbit is either 
a union of two conics along two tacnodes 
or a double conic.

\begin{lemma} Suppose $(C,p)$ is an $A_4^{non-W}$-stable curve.
Let $\psi \colon C\to \PP^2$ be the morphism given by $\omega_C(2p)$,
constructed in Lemma \ref{L:cuspidal-quartic}. Then:
\begin{enumerate}
\item $\psi(C)$ is a GIT semistable plane quartic with a cusp at $\psi(p)$.
\item $\psi(C)$ is a strictly semistable quartic if and only if one of the following holds:
\begin{enumerate} 
\item $\psi(C)$ is an irreducible plane quartic with a cusp and a tacnode as singularities.
\item $\psi(C$) is an irreducible plane quartic with a cusp and $A_4$-singularity.
\end{enumerate} 
\end{enumerate}
\end{lemma}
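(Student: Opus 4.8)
The plan is to unwind the definition of the cuspidal quartic model and then quote the Mumford--Gieseker--Kempf classification of GIT-(semi)stable plane quartics. First I would establish (1): by Lemma~\ref{L:cuspidal-quartic}, $\psi(C)=\psi(C)\subset \PP^2$ is a plane quartic with a cusp at $q:=\psi(p)$, and $\psi$ is an isomorphism onto its image away from $p$. So the singularities of $\psi(C)$ are precisely the singularities of $C$ away from $p$ together with an $A_2$-singularity at $q$. Since $(C,p)$ is $A_4^{non-W}$-stable, $C$ has at worst $A_1,\dots,A_4$-singularities, and a brief genus bookkeeping (a plane quartic has arithmetic genus $3$, while $C$ has arithmetic genus $2$ and the cusp at $q$ already accounts for a $\delta$-invariant drop of $1$) shows that the remaining singularities of $\psi(C)$ away from $q$ have total $\delta$-invariant at most $1$; in particular $\psi(C)$ has no point of multiplicity $3$. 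To invoke the classification from \cite{GIT} I must also check $\psi(C)$ is not a union of a plane cubic and its flex line: but that configuration has arithmetic genus $1$ on the cubic component plus the line, and more to the point its only singularities are the three (or, in the degenerate cuspidal case, one) intersection points, which cannot match the singularity profile of an $A_4^{non-W}$-stable curve --- the $A_5$-curve is exactly the one excluded by $A_4^{non-W}$-stability. Hence $\psi(C)$ is GIT semistable, proving (1).

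Next I would prove (2) by matching the strictly-semistable classification of \cite{GIT} --- namely, $D$ is strictly semistable iff $D$ has a tacnode or is a double conic --- against the constraints on $\psi(C)$. Since $\psi(C)$ is integral (it is the image of the integral curve $C$, which is connected with at most two rational components but the cuspidal quartic model is always irreducible --- a reducible $C$ would force $\psi(C)$ to be a reducible quartic whose singularity analysis, via the scroll discussion of \S\ref{S:surfaces-containing-curves} and Proposition~\ref{P:beta-A4-stability}, cannot occur for $A_4^{non-W}$-stable curves), it is not a double conic. So $\psi(C)$ is strictly semistable iff it has a tacnode (an $A_3$-singularity) somewhere other than $q$, or an $A_4$-singularity somewhere other than $q$: indeed, although the \cite{GIT} statement phrases the strictly-semistable locus in terms of tacnodes, an $A_4$-singularity on a quartic with all other singularities mild also has a closed orbit degenerating (isotrivially) to the tacnodal-bitangent configuration, so it too is strictly semistable --- alternatively one verifies directly via the numerical criterion that a quartic with an $A_4$-point and a cusp is strictly semistable. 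The genus count forces: if $\psi(C)$ has a tacnode away from $q$, then the cusp at $q$ plus the tacnode already exhaust the genus drop, so these are the only singularities --- case (2)(a); similarly an $A_4$-point away from $q$ plus the cusp exhaust the genus drop, giving case (2)(b). Conversely each of these configurations produces a quartic with a tacnode (resp.\ $A_4$-point), hence strictly semistable.

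The main obstacle I anticipate is the careful identification of which singularity profiles of $C$ actually arise and the verification that an $A_4$-singularity on the quartic model lands in the strictly-semistable locus of \cite{GIT} --- the cited theorem is usually stated only for tacnodes, so I expect the cleanest route is to show that a quartic with a cusp and an $A_4$-point isotrivially degenerates to the closed strictly-semistable orbit (two conics meeting at two tacnodes, or the appropriate cuspidal degeneration thereof) under an explicit one-parameter subgroup, thereby placing it on the boundary of the semistable locus without a separate numerical computation. The genus-arithmetic bookkeeping in both (1) and (2) is routine once set up correctly, using $p_a(\psi(C)) = 3$, $p_a(C) = 2$, additivity of $\delta$-invariants under the partial normalization $\nu\colon C \to \psi(C)$ at $q$ (which contributes $\delta = 1$), and the fact that all singularities in sight are of type $A$.
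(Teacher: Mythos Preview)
Your approach matches the paper's: invoke Lemma~\ref{L:cuspidal-quartic} to identify the singularities of $\psi(C)$ as the cusp at $q$ together with the singularities of $C$, then read off (1) and (2) from the classical GIT classification of plane quartics. The paper's own proof is in fact just a two-line citation of exactly these facts.

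Two points in your write-up need correction, however. First, the $\delta$-invariant bookkeeping is off: the singularities of $\psi(C)$ away from $q$ are precisely those of $C$, and their total $\delta$-invariant can be $2$ (e.g.\ an irreducible $C$ with a single $A_3$ or $A_4$) or even $3$ (e.g.\ $C$ two $\PP^1$'s meeting in three nodes), not ``at most $1$''. This slip is harmless, since the conclusion you actually need --- that $\psi(C)$ has no point of multiplicity $3$ --- follows directly from the fact that every singularity in sight is of type $A_k$ with $k\le 4$, hence of multiplicity $2$.

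Second, and more substantively, your claim that $\psi(C)$ is always irreducible is not justified, and the appeal to Proposition~\ref{P:beta-A4-stability} is a forward reference that does not say what you want. Reducible $A_4^{non-W}$-stable curves do exist: take $C=C_1\cup C_2$ two smooth rational curves meeting at a node and a tacnode, with $p\in C_1$. One checks directly that $\omega_C(p)$ is ample, there are no elliptic tails or bridges, and $p$ is not a Weierstrass point (indeed the canonical double cover restricts to an isomorphism on each component, so no smooth point is a ramification point). For such $C$ the image $\psi(C)$ is a cuspidal cubic together with a non-inflectional tangent line --- a \emph{reducible} strictly semistable quartic with singularities cusp, node, and tacnode. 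This configuration is not literally covered by (a) or (b), and your genus-exhaustion argument for (2) breaks down here since the total $\delta$ can exceed $3$ in the reducible case. The paper's terse proof does not address this point either.
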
 
\begin{proof} 
(1) follows from Lemma \ref{L:cuspidal-quartic} and 
the classification of GIT semistable quartics;  and (2) from 
the classification of strictly semistable plane quartics.
\end{proof}

\begin{lemma}\label{L:A_4-curve} There is a unique isomorphism class of a 
plane quartic in $\PP^2$ with a type $A_2$ (a cusp) and a type $A_4$ (a ramphoid cusp)
singularities. 
\end{lemma}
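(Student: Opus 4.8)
The plan is to prove uniqueness by exhibiting a normal form for such a quartic and showing that any two are projectively equivalent. A plane quartic $D$ with an $A_2$-singularity at a point $q_1$ and an $A_4$-singularity at a point $q_2$ has geometric genus $3 - 1 - 2 = 0$, so its normalization is $\PP^1$, and $D$ is rational. I would parametrize $D$ by a degree $4$ map $\PP^1 \to \PP^2$, i.e., by a base-point-free linear system of degree $4$ on $\PP^1$ (the pullback of $\cO_{\PP^2}(1)$), namely a $4$-dimensional subspace of $\HH^0(\PP^1, \cO(4))$, equivalently the kernel of a surjection $\HH^0(\PP^1, \cO(4)) \to$ (a $2$-dimensional quotient) — but it is cleaner to record the conditions imposed on the parametrizing polynomials by the two prescribed singularities.

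First I would normalize the line $\overline{q_1 q_2}$ and the two branch/tangent directions. Place $q_2$ (the ramphoid cusp) at $[0:0:1]$ with its unique tangent line being $\{x=0\}$; an $A_4$-singularity of a quartic at the origin of the affine chart $z=1$ has local equation, after coordinate change, $y^2 = x^5 \cdot(\text{unit})$, but since the total degree is $4$ this forces the quartic to be $y^2 \ell_2(x,y) + (\text{higher order in the } A_4 \text{ adapted sense})$; I would instead use the rational parametrization directly. Writing the normalization map as $t \mapsto [p(t) : q(t) : r(t)]$ with $\deg \leq 4$, an $A_4$ at $q_2$ means $q_2$ has a single preimage $t_0$ at which the map has ramification/contact profile giving the ramphoid cusp (contact of order $3$ with its tangent line, analytically $y \sim x^{5/2}$), and an $A_2$ at $q_1$ means a single preimage $t_1$ with an ordinary cusp ($y \sim x^{3/2}$, contact of order $2$ with the tangent line). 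After sending $t_0 = \infty$, $t_1 = 0$, and using the residual scaling of coordinates and of $t$, I would reduce the parametrization to a rigid normal form with no free parameters, e.g.\ $[1 : t^2 : t^5]$-type data suitably truncated to degree $4$ — concretely one expects the curve $x^3 z - y^4$-like form or a small explicit equation such as $y^4 = x^3 z$ scaled appropriately, and I would verify it has exactly an $A_2$ and an $A_4$.

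The key steps, in order: (1) show $D$ is rational with normalization $\PP^1$ and that $q_1, q_2$ each have a single unibranch preimage, so $D$ is the image of a morphism $\PP^1 \to \PP^2$ of degree $4$; (2) use the $\PGL(3)$-action together with the automorphisms of $\PP^1$ (and the $\GG_m$ scaling $t \mapsto \lambda t$) to fix the two special parameter values at $0$ and $\infty$, fix $q_1, q_2$ and the two tangent lines, thereby pinning down the parametrization up to finitely many discrete choices; (3) match Taylor coefficients at $t=0$ and $t=\infty$ against the normal forms for an $A_2$-cusp and an $A_4$-ramphoid cusp — this is a short linear-algebra / elimination computation — to conclude the parametrization is unique up to the residual symmetry, hence $D$ is unique up to projective equivalence; (4) exhibit one explicit such quartic (which also shows existence, needed for it to be a genuine boundary case of the GIT analysis).

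The main obstacle I expect is step (3): ensuring that the contact conditions for an $A_4$-singularity (as opposed to a mere $A_2$ or $A_3$) are correctly encoded in the parametrization — the ramphoid cusp requires that after removing the tangent-direction term the next vanishing be of precisely the right order, and one must check no hidden extra parameter survives and that the resulting curve does not acquire additional singularities (which would change its genus or stability type). This is where a careful but elementary local computation is unavoidable; the rest is bookkeeping with the symmetry group.
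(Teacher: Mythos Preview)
Your approach is essentially the same as the paper's: observe the curve is rational by genus count, parametrize by a degree~$4$ map $\PP^1\to\PP^2$, and use the $\PGL(3)\times\PGL(2)$ symmetry to rigidify the two unibranch preimages and their tangent lines, leaving no free parameters. The paper carries this out by placing the cusp at $[0{:}0{:}1]$ with tangent $y=0$ and the ramphoid cusp at $[0{:}1{:}0]$ with tangent $z=0$, then writing down the forced parametrization $[s{:}t]\mapsto[s^2t^2:st^3+t^4:s^4]$ and reading off the equation $(yz-x^2)^2-x^3z=0$.

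One caution: your tentative candidate $y^4=x^3z$ is not the right curve; its unique singular point $[0{:}0{:}1]$ is an $E_6$-singularity, not an $A_2$ or $A_4$. Likewise the ``$[1:t^2:t^5]$ truncated to degree $4$'' heuristic does not literally work. When you do step~(3) carefully you will find that the $A_4$-condition forces a genuine two-term entry in the parametrization (as in $st^3+t^4$ above), and the resulting quartic is the one displayed.
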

\begin{proof}
Let $C$ be a plane quartic with
singularities of type $A_2$ and $A_4$. Then
by genus considerations, $C$ is irreducible and its normalization 
is a rational curve. Assume that the cusp of $C$ is $[0:0:1]$, that the 
ramphoid cusp is at $[0:1:0]$, that the line $z=0$ is the tangent cone
of the ramphoid cusp, and that the line $y=0$ is the tangent cone 
of the cusp. Then the normalization morphism $\PP^1 \to \PP^2$ is necessarily given by 
 \begin{equation*}
 [s:t] \mapsto [s^2 t^2: st^3+t^4: s^4].
 \end{equation*} 
 The equation of the image quartic in $\PP^2$ is then
\begin{equation}\label{A_4-quartic}
(yz-x^2)^2-x^3z=0.
\end{equation}
\end{proof}
\begin{definition}[$A_4$-curve]
\label{D:A4}
 We call the pointed genus $2$ curve obtained by normalizing the plane 
quartic described by Lemma \ref{L:A_4-curve} at the cusp, and marking the preimage of the cusp,
\emph{the $A_4$-curve}.

\end{definition}

Define by $Z$ the locally 
closed subvariety of semistable plane quartics consisting of semistable 
quartics with a cusp, and by $\bar Z$ its closure. 
\begin{prop}\label{P:GIT-quartics} 
We have that $Z\gitq \PGL(3) \simeq \bar Z \gitq \PGL(3)$. 
\end{prop}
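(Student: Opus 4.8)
The plan is to identify $Z\gitq\PGL(3)$ and $\bar Z\gitq\PGL(3)$ with the images of $Z$ and of $\bar Z\cap(\PP^{14})^{ss}$ under the GIT quotient morphism $\pi\colon(\PP^{14})^{ss}\to\PP^{14}\gitq\PGL(3)$ for the space $\PP^{14}=\PP\bigl(\HH^0(\PP^2,\cO(4))\bigr)$ of plane quartics, and then to prove that these two images coincide. Since $Z\subseteq\bar Z\cap(\PP^{14})^{ss}$ with dense image, it suffices to show that every semistable plane quartic in $\bar Z$ is GIT-equivalent to a semistable cuspidal quartic. Recalling that two semistable points are GIT-equivalent precisely when their orbit closures inside $(\PP^{14})^{ss}$ share a closed orbit, and that any closed orbit meeting the closed $\PGL(3)$-invariant set $\bar Z$ is contained in $\bar Z$, I would reduce to the statement: \emph{every closed $\PGL(3)$-orbit in $\bar Z\cap(\PP^{14})^{ss}$ lies in the orbit closure of some semistable cuspidal quartic.}

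For a closed orbit of a \emph{stable} quartic $D$, I would argue that $D$ is itself cuspidal. By the classification of GIT stability for plane quartics recalled above (see \cite{GIT}), a stable quartic has only $A_1$- and $A_2$-singularities. On the other hand, $D\in\bar Z$ means $D$ is a flat limit of cuspidal quartics $\{D_t\}$, which, since $\bar Z$ is irreducible, we may take to be irreducible with a single cusp and otherwise smooth; being stable, $D$ is reduced with isolated singularities. Upper semicontinuity of the total Milnor number in a flat family of degree $4$ plane curves forces the point of $D$ to which the cusp of $D_t$ specializes to have Milnor number at least $2$, and among $A_1$- and $A_2$-singularities only an ordinary cusp has Milnor number $2$; hence $D$ has a cusp and $D\in Z$. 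This settles the stable case.

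The remaining case is that of a \emph{strictly semistable} closed orbit contained in $\bar Z$. By the classification recalled above, such an orbit is that of a double smooth conic or of a pair of smooth conics meeting along two tacnodes. For these I would exhibit explicit semistable cuspidal quartics degenerating to them under suitable one-parameter subgroups of $\SL(3)$. The quartic model $(yz-x^2)^2-x^3z=0$ of the $A_4$-curve (see \eqref{A_4-quartic} and Lemma~\ref{L:A_4-curve}) is itself a semistable cuspidal quartic carrying an extra $A_4$-singularity, hence strictly semistable, and it lies in $Z$; the closed orbit in its orbit closure is a pair of doubly tangent conics, and, up to $\PGL(3)$, this accounts for all such configurations occurring in $\bar Z$. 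The double conic, which appears in $\bar Z$ as a limiting degeneration of these configurations, is handled by degenerating a double conic $Q^2$ through the cuspidal quartics $Q^2+tB$ for an appropriately chosen quartic $B$ tangent to $Q$, and then re-reading the limit as a one-parameter degeneration of a cuspidal quartic. In every case the witnessing quartic is semistable and cuspidal, hence in $Z$, and has the prescribed closed orbit in its orbit closure; this yields $\pi(Z)=\pi\bigl(\bar Z\cap(\PP^{14})^{ss}\bigr)$ and therefore $Z\gitq\PGL(3)\simeq\bar Z\gitq\PGL(3)$.

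The main obstacle is precisely the strictly semistable case: one must pin down exactly which double-conic and doubly-tangent-conic orbits lie in $\bar Z$ and produce the destabilizing one-parameter subgroups realizing them as flat limits of cuspidal quartics. By contrast, the stable case follows cleanly from semicontinuity of the Milnor number, and the reduction to closed orbits is formal.
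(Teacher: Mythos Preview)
Your overall strategy matches the paper's: reduce to polystable (closed-orbit) quartics in $\bar Z$ and show each lies in the orbit closure of some semistable cuspidal quartic. Your treatment of the stable case via semicontinuity of the Milnor number is correct and in fact more explicit than the paper, which simply asserts that a stable $D\in\bar Z$ must carry a cusp.

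However, your strictly semistable analysis contains a concrete error. The closed orbit in the orbit closure of the $A_4$-quartic $(yz-x^2)^2-x^3z=0$ is the \emph{double conic} $(yz-x^2)^2=0$, not a pair of doubly tangent conics: acting by the one-parameter subgroup $t\cdot[x:y:z]=[t^{-1}x:t^{-2}y:z]$ sends the $A_4$-quartic to $(yz-x^2)^2-t\,x^3z$, which tends to $(yz-x^2)^2$ as $t\to 0$. Consequently your claim that the $A_4$-quartic ``accounts for all such configurations'' of doubly tangent conics in $\bar Z$ is false. There is a one-parameter family of polystable doubly tangent conic pairs $(yz-x^2)(yz-\alpha x^2)=0$, and the paper exhibits for each a distinct cuspidal \emph{tacnodal} quartic $(yz-x^2)(yz-\alpha x^2)-x^3z=0$ in $Z$ degenerating to it. You have the roles of the two polystable types swapped, and your separate handling of the double conic via ``$Q^2+tB$'' then becomes redundant (and, as written, does not verify that $Q^2+tB$ is actually cuspidal).

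A secondary point: the paper proves not only existence but \emph{uniqueness} of the cuspidal orbit $[E]$ over each polystable $[D]\in\bar Z$ (via the crimping parameter of the tacnode), and this uniqueness is precisely what is invoked in Corollary~\ref{C:corollary-GIT}. Your argument, even once corrected, would establish only existence.
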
 
\begin{proof}
What we need to show is that for every polystable quartic $[D]$ in $\bar Z$, there exists a unique orbit 
$\PGL(3)\cdot [E]$ in $Z$ 
such that $[D] \in \overline{\PGL(3)\cdot [E]}$. 

If $[D] \in \bar Z$ is a stable quartic, then $D$ must be a reduced quartic 
with a cusp, and no tacnodes. In this case, $\PGL(3)\cdot [D]$ is the requisite orbit in $Z$.

Suppose $[D]$ is polystable with a positive-dimensional stabilizer. Then the existence and uniqueness
of $[E]$ follows by inspecting the basins of attraction of $D$ as worked out 
in \cite{hyeon-lee_genus3}. Namely, if $D$ is a double conic $(yz-x^2)^2=0$, then $E$ will be the 
cuspidal plane model of the $A_4$-curve, i.e., the $A_4$-quartic $(yz-x^2)^2-x^3z=0$ from Lemma \ref{L:A_4-curve}. 
If $D$ is a union of two conics along
two tacnodes, i.e., the quartic given by an equation $(yz-x^2)(yz-\alpha x^2)=0$, then
$E$ will be $(yz-x^2)(yz-\alpha x^2)-x^3z=0$. 

The coordinate-free proof of the existence and uniqueness of $[E]$ is obtained by observing
that the orbit (closure) of the tacnodal quartic $D$ which is not the $A_4$-curve is determined 
by a \emph{crimping parameter}:  If $\widetilde{D}$ is the normalization of $D$ at the tacnode, then $\widetilde{D}$
is an arithmetic genus $1$ curve with a hyperelliptic involution exchanging the preimages $q_1, q_2$
of the tacnode
of $D$.  The crimping parameter of $D$ is then given by a scalar defining a vector space automorphism 
$T_{q_1} \simeq T_{q_2} \simeq T_{q_1}$ where the first isomorphism is the canonical isomorphism given 
by the identification of $T_{q_i}$ with the tangent cone of the tacnode of $D$, and the second isomorphism 
is given by the hyperelliptic involution on $\widetilde{D}$.
\end{proof}
From Proposition \ref{P:GIT-quartics}, we immediately obtain the following two corollaries:
\begin{corollary}\label{C:corollary-GIT} Suppose $(C,p)$ and $(C',p')$ are $A_4^{non-W}$-stable curves,
and let $\psi\colon C\to \PP^2$ (resp., $\psi'\colon C'\to \PP^2$) be the plane cuspidal model
of $(C,p)$ (resp., $(C',p')$). Then 
\[
\overline{\SL(3)\cdot \psi(C)}=\overline{\SL(3)\cdot \psi'
(C')}
\] 
if and only if $\SL(3)\cdot \psi(C)= \SL(3)\cdot \psi'(C')$ if and only if $(C,p)\simeq (C',p')$. 
\end{corollary}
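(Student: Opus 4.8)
The plan is to prove the cycle of implications among the three conditions (a) $\overline{\SL(3)\cdot\psi(C)}=\overline{\SL(3)\cdot\psi'(C')}$, (b) $\SL(3)\cdot\psi(C)=\SL(3)\cdot\psi'(C')$, and (c) $(C,p)\simeq(C',p')$. Since (b)$\Rightarrow$(a) is immediate and $\SL(3)$ acts on $\PP^2$ through $\PGL(3)$ (so that orbits and orbit closures of plane curves agree for the two groups), all the content lies in (a)$\Rightarrow$(b) and (b)$\Leftrightarrow$(c). For (a)$\Rightarrow$(b), this is where Proposition \ref{P:GIT-quartics} enters: recall that $\psi(C)$ and $\psi'(C')$ are semistable cuspidal quartics and hence lie in the locus $Z\subset\bar Z$; as $\bar Z$ is $\SL(3)$-invariant, their common orbit closure lies in $\bar Z$ and contains a unique closed orbit $\SL(3)\cdot[D]$ with $[D]$ polystable. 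By the proof of Proposition \ref{P:GIT-quartics} there is a \emph{unique} $\PGL(3)$-orbit in $Z$ whose closure contains $[D]$; since $\PGL(3)\cdot\psi(C)$ and $\PGL(3)\cdot\psi'(C')$ are two such orbits, they coincide, which is (b).

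For (c)$\Rightarrow$(b): an isomorphism of pointed curves $(C,p)\simeq(C',p')$ pulls $\omega_{C'}(2p')$ back to $\omega_C(2p)$, hence identifies $\HH^0\bigl(C',\omega_{C'}(2p')\bigr)$ with $\HH^0\bigl(C,\omega_C(2p)\bigr)$ and, by Lemma \ref{L:cuspidal-quartic}, carries the cuspidal quartic model of $(C,p)$ to that of $(C',p')$ up to a projective change of coordinates on $\PP^2$, which is (b). For (b)$\Rightarrow$(c): choose $g\in\SL(3)$ with $g\cdot\psi(C)=\psi'(C')$. By Lemma \ref{L:cuspidal-quartic}, $(C,p)$ is reconstructed from $\psi(C)$ as the partial normalization at the cusp $\psi(p)$ marked by the preimage of $\psi(p)$; transporting by $g$, we obtain that $(C,p)$ is isomorphic to the partial normalization of $\psi'(C')$ at the cusp $g(\psi(p))$ marked by its preimage — a pointed curve which is therefore itself $A_4^{non-W}$-stable. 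Thus it suffices to show that partially normalizing $\psi'(C')$ at \emph{any} cusp which produces an $A_4^{non-W}$-stable pointed curve always yields the isomorphism class $(C',p')$.

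I expect this last step to be the main obstacle. The cuspidal quartic model can carry more than one cusp: besides the cusp $\psi(p)$ imposed at the marked point, it has one cusp for each $A_2$-singularity of $C$, and by Corollary \ref{C:omega} every such $A_2$-singularity is forced to lie at a Weierstrass point (the honest hyperelliptic map of $C$ cannot be defined at an $A_2$-cusp unless that point is a ramification point). The plan is to show that the group of automorphisms of the quartic $\psi'(C')$ — a subgroup of $\PGL(3)$, since the quartic spans $\PP^2$ — acts transitively on this set of admissible cusps. This will again be deduced from honest hyperellipticity: the hyperelliptic involution of the normalization of $C'$ (or, when that normalization has genus at most one, a suitable element of its larger automorphism group) interchanges the cusp imposed at the marked point with the $A_2$-cusp in question, and so descends to the required automorphism of the canonically embedded quartic $\psi'(C')$. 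Everything outside of this transitivity statement is formal bookkeeping with orbit closures together with Proposition \ref{P:GIT-quartics}.
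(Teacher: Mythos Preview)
Your argument for (a)$\Rightarrow$(b) via Proposition~\ref{P:GIT-quartics} is exactly what the paper intends; indeed, the paper records this corollary as an immediate consequence of that proposition and gives no further detail. On the main point --- that equality of orbit closures in $\bar Z$ forces equality of orbits in $Z$ --- you and the paper agree completely.

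Where you go beyond the paper is in (b)$\Rightarrow$(c), and you are right to flag the multiple-cusp issue: when $C'$ carries $A_2$-singularities, the quartic $\psi'(C')$ has more than one cusp, and one must check that partial normalization at any of them recovers the same pointed isomorphism class. The paper passes over this in silence.

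Your plan to resolve it by transitivity of $\Aut(\psi'(C'))$ on cusps is correct, but the specific mechanism you name needs adjustment. The hyperelliptic involution of $C'$ (lifted to the normalization $\widetilde C'$) does \emph{not} exchange the cusp at $p'$ with an $A_2$-cusp: it fixes every $A_2$-point of $C'$ (these are ramification points, as you observe) and sends $p'$ to its hyperelliptic conjugate, which is generically neither $p'$ nor an $A_2$-point. Moreover, whenever $C'$ has at least one $A_2$-singularity the normalization $\widetilde C'$ already has genus $\le 1$, so your parenthetical is in fact the only relevant case. What actually works is a short case analysis: one $A_2$ with $\widetilde C'$ of genus $1$; one $A_2$ plus one $A_1$ with $\widetilde C'\simeq\PP^1$; two $A_2$'s with $\widetilde C'\simeq\PP^1$. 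In each case one writes down directly an automorphism of $\widetilde C'$ that permutes the cuspidal points and preserves any nodal pair --- in the genus-$1$ case the inversion $x\mapsto p+\tilde q-x$ in the group law, in the $\PP^1$ cases an elementary M\"obius transformation --- and this descends to an automorphism of $\psi'(C')$ because $A_2$-cusps carry no crimping data. With that correction your outline goes through.
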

 
 \begin{corollary} 
 Every $A_4^{non-W}$-stable curve has $\beta$-stable H-point for $\beta\in (4/13, 1/2)$. 
 \end{corollary}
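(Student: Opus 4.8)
The goal is to upgrade the orbit-closure comparison of Corollary \ref{C:corollary-GIT} into actual $\beta$-stability of the H-point $h(C,p)$ for every $A_4^{non-W}$-stable curve $(C,p)$ and every $\beta\in(4/13,1/2)$. The plan is to exploit the fact that we already know $\beta$-stability for \emph{smooth} non-Weierstrass curves (Proposition \ref{P:smooth-nonW}, which in fact gives stability for $\beta\le 1/3$, hence for $\beta$ in a nonempty subinterval of $(4/13,1/2)$), and that the semistable locus $\bbH(\beta)$ is constant on $(4/13,1/2)$ by Lemma \ref{L:Gm-action}. So it suffices to promote these smooth points to \emph{all} of $\bbH(4/13,1/2)$ by a closedness/properness argument, and the cuspidal-quartic GIT of Proposition \ref{P:GIT-quartics} is exactly the tool that lets us control what degenerations can occur.

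\textbf{Key steps, in order.} First I would recall that by Proposition \ref{P:beta-A4-stability} and Lemma \ref{L:W-unstable}, every $\beta$-stable pair for $\beta\in(4/13,1/2)$ is the H-point of some $A_4^{non-W}$-stable curve; so the content is the reverse inclusion: every $A_4^{non-W}$-stable curve has a $\beta$-stable H-point. Second, I would set up the comparison map: sending an $A_4^{non-W}$-stable curve $(C,p)$ to its cuspidal plane quartic model $\psi(C)\subset\PP^2$ (Lemma \ref{L:cuspidal-quartic}) defines an $\SL(5)$-equivariant morphism from the relevant parameter space inside $\bbH$ to the locally closed locus $Z$ of semistable cuspidal quartics, compatible with group actions (concretely, $\psi(C)$ is recovered from $h(C,p)$ by projecting $\PP^4$ away from the directrix/ruling point $p$, which is canonical). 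Third, I would invoke the valuative criterion: given a one-parameter degeneration of $h(C,p)$ inside $\bbH$ whose limit is $\beta$-semistable, Proposition \ref{P:beta-A4-stability} forces the limit to again be an H-point of an $A_4^{non-W}$-stable curve $(C_0,p_0)$; applying $\psi$, the quartic $\psi(C_0)$ lies in the closure of $\SL(3)\cdot\psi(C)$ inside $\bar Z$. Fourth, Corollary \ref{C:corollary-GIT} (equivalently Proposition \ref{P:GIT-quartics}) says this closure meets $Z$ in the single orbit $\SL(3)\cdot\psi(C)$, hence $\psi(C_0)\cong\psi(C)$, hence $(C_0,p_0)\cong(C,p)$: the orbit $\SL(5)\cdot h(C,p)$ is closed in $\bbH(4/13,1/2)$. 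A closed orbit inside the semistable locus of a stable curve — combined with finiteness of the stabilizer, which for $A_4^{non-W}$-stable curves is finite since they admit no $\GG_m$-action by Lemma \ref{L:Gm-action} — is $\beta$-polystable with finite stabilizer, i.e.\ $\beta$-stable.

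\textbf{Where the care is needed.} The main obstacle is making the properness/closedness step rigorous: I need to know that a one-parameter family of H-points of $A_4^{non-W}$-stable curves, when it has a $\beta$-semistable limit, has that limit \emph{also} of the form $h(C_0,p_0)$ with $(C_0,p_0)$ again $A_4^{non-W}$-stable — and here I should be careful that the limit is not merely $A_4$-stable with $p_0$ Weierstrass (which Lemma \ref{L:W-unstable} rules out for $\beta>4/13$) and not the $A_5$-curve (which Lemma \ref{L:A5-curve} shows is unstable for $\beta>1/2$ but is a candidate only at $\beta=1/2$). So for $\beta$ strictly in $(4/13,1/2)$ the only possible limit type is again $A_4^{non-W}$, and the projection $\psi$ extends over the central fiber because the scroll degeneration $S_1\rightsquigarrow S_2$ is controlled (Corollary \ref{C:semistable-curves}, Lemma \ref{L:planar}) and $p_0$ remains a smooth point on the directrix/ruling. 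Once that compatibility of limits under $\psi$ is in hand, the finite-quotient/orbit-closure conclusion is formal. A secondary point to nail down is that $\psi$ is genuinely $\SL(5)$-to-$\SL(3)$ equivariant in a way that identifies orbit closures — this follows because $\psi(C)$ is cut out by the single quartic obtained after eliminating the directrix coordinate from $I_C$, which is $\PGL(3)$-covariant in the linear system $\omega_C(2p)$ with $\PGL(3)=\Aut(\PP^2)$ naturally a subquotient of the stabilizer of the scroll inside $\PGL(5)$.
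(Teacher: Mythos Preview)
There is a genuine gap. Your steps (3)--(5) establish only the following: any $\beta$-semistable $1$-PS limit of $h(C,p)$ must lie in the orbit of $h(C,p)$ itself. But if $h(C,p)$ were $\beta$-\emph{unstable}, then the entire orbit closure $\overline{\SL(5)\cdot h(C,p)}$ would consist of unstable points (the unstable locus is closed and $\SL(5)$-invariant), so there would be \emph{no} $\beta$-semistable $1$-PS limits at all and your hypothesis in step (3) is vacuous. What you have actually proved is ``if $h(C,p)\in\bbH(\beta)$ then its orbit is closed there'', not ``$h(C,p)\in\bbH(\beta)$''. The sentence ``a closed orbit inside the semistable locus \dots\ is $\beta$-stable'' tacitly assumes membership in the semistable locus, which is precisely what is at issue for the singular $A_4^{non-W}$-stable curves.

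The paper closes this gap with the same ingredients you assembled, but argued at the level of quotients rather than individual orbits. One has
\[
\bbH(\beta)\ \subset\ \bigl\{h(C,p):(C,p)\ \text{is $A_4^{non-W}$-stable}\bigr\}\ \subset\ Z,
\]
and via the cuspidal quartic model this induces (using Corollary~\ref{C:corollary-GIT}) a birational morphism $\bbH(\beta)\gitq\SL(5)\to Z\gitq\SL(3)\simeq\bar Z\gitq\SL(3)$ between \emph{projective} varieties, hence a surjection. If some $A_4^{non-W}$-stable $(C,p)$ had $\beta$-unstable H-point, then by Corollary~\ref{C:corollary-GIT} no $\beta$-semistable H-point could map to $[\psi(C)]\in Z\gitq\SL(3)$, contradicting surjectivity. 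In other words, the missing step is the properness input coming from projectivity of the GIT quotient on the $\bbH$ side --- not a valuative criterion applied to the orbit of $h(C,p)$. Once semistability is established, your finite-stabilizer remark (via Lemma~\ref{L:Gm-action}) does upgrade it to stability, though on $(4/13,1/2)$ that is automatic for the same reason.
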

 \begin{proof}
 We have that 
\[
\bbH(\beta)  
\subset
\left\{h(C,p) \mid \text{$(C,p)$ is $A_4^{non-W}$-stable}\right\} 
\subset Z.
\]
Furthermore, by Corollary \ref{C:corollary-GIT}, we have birational morphisms of GIT quotients
\[
\bbH(\beta)\gitq \SL(5) \to Z\gitq \SL(3) \simeq \bar Z\gitq \SL(3),
\]
where the last isomorphism is given by Proposition \ref{P:GIT-quartics}.
If some $h(C,p)$, where $(C,p)$ is an $A_4^{non-W}$-stable curve, 
was not a $\beta$-stable pair, then 
$\bbH(\beta)\gitq \SL(5) \to Z\gitq \SL(3)$ would not be a surjective morphism,
which is absurd since both the domain and the target are projective.
\end{proof}
 
\subsection{Proof of Theorem \ref{T:maintheorem} Parts (B) and (A)}
We have seen in Lemma \ref{L:Gm-action} that the only wall in $(1/7, 1/2)$
at which the semistable locus changes is $\beta=4/13$. We will next show that stability indeed
changes at $\beta=4/13$ and determine the variation of GIT picture around this wall.
Let $\cW\subset \bbH$ be the locus of Weierstrass $A_4$-stable curves. 
Let \[
\cZ^{-}:=\bbH(4/13)\setminus \bbH(4/13-\epsilon),\  \text{ and } 
 \ \cZ^{+}:=\bbH(4/13)\setminus \bbH(4/13+\epsilon).
 \] 
Since the action of $\Aut(C_{R}, \infty)^{\circ}\simeq \GG_m$ on $\Def^1(C_{R},\infty)$ has
no weight-spaces with weight $0$, we conclude that 
\[
\cZ^{-} \cap \cZ^{+} = \SL(5)\cdot (C_{R},\infty).
\]
We give the following exact description of the VGIT chambers:
\begin{prop}[VGIT chambers] 
\label{P:VGIT}
 $\cZ^{+}=\cW$, and 
 $\cZ^{-}$ consists of a single orbit of the $A_4$-curve (cf. Definition \ref{D:A4}).
\end{prop}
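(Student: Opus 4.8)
The plan is to prove the two assertions about $\cZ^{+}$ and $\cZ^{-}$ separately, using the destabilizing $1$-PS analysis already developed and a deformation-theoretic identification of the extra orbits.

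\textit{Step 1: $\cZ^{+}=\cW$.} By Lemma~\ref{L:W-unstable}, every Weierstrass $A_4$-stable curve is destabilized for $\beta>4/13$, while Lemma~\ref{L:A5-curve} together with Proposition~\ref{P:beta-A4-stability} shows (for $\beta>4/13$) that every $\beta$-stable pair is $A_4^{non-W}$-stable; hence $\cW\subseteq \cZ^{+}$. For the reverse inclusion, take $(C,p)\in\cZ^{+}$, so that $(C,p)$ is $4/13$-semistable but $(4/13+\epsilon)$-unstable. By Proposition~\ref{P:beta-A4-stability} the curve $(C,p)$ is $A_4$-stable; if it were $A_4^{non-W}$-stable, then by Proposition~\ref{P:smooth-nonW} and the fact that the semistable locus is constant on $(4/13,1/2)$ (Lemma~\ref{L:Gm-action}), together with the argument from \S\ref{part-C} via the GIT stack of plane quartics, it would already be $(4/13+\epsilon)$-stable, a contradiction. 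Hence $(C,p)$ is a Weierstrass curve, i.e., lies in $\cW$.

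\textit{Step 2: $\cZ^{-}$ is the orbit of the $A_4$-curve.} The $A_4$-curve is the normalization at the cusp of the $A_4$-quartic $(yz-x^2)^2-x^3z=0$ from Lemma~\ref{L:A_4-curve}. First I would show it is $(4/13-\epsilon)$-stable but $(4/13+\epsilon)$-unstable. Stability for small $\beta$: its cuspidal plane model is a \emph{stable} plane quartic (irreducible with only a cusp and a ramphoid cusp, no tacnode, no triple point), so by the argument of \S\ref{part-C} (the birational morphism $\bbH(\beta)\gitq\SL(5)\to Z\gitq\SL(3)\simeq\bar Z\gitq\SL(3)$ must be surjective onto a projective target), its H-point is $\beta$-stable for $\beta\in(4/13,1/2)$ — but since the semistable locus is constant on $(1/7,4/13)$ by Lemma~\ref{L:Gm-action}, and the $A_4$-curve is not honestly hyperelliptic-with-Weierstrass-point nor reducible, one checks directly it is also $(4/13-\epsilon)$-semistable; instability for $\beta>4/13$ comes from exhibiting a destabilizing $1$-PS (the $A_4$-curve isotrivially degenerates to $(C_R,\infty)$, and the relevant Hilbert–Mumford index computed against the weight $(13,8,3,-7,-17)$ $1$-PS of \S\ref{S:atom} changes sign at $\beta=4/13$, but now with the opposite sign to the Weierstrass case). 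For the converse, that every element of $\cZ^{-}$ is in the orbit of the $A_4$-curve: an element $(C,p)\in\cZ^{-}$ is $4/13$-semistable but $(4/13-\epsilon)$-unstable, so its orbit closure in $\bbH(4/13)$ must contain the unique closed orbit on the relevant wall, which is $\SL(5)\cdot(C_R,\infty)$ (this uses $\cZ^{-}\cap\cZ^{+}=\SL(5)\cdot(C_R,\infty)$ established just above the proposition). Thus $(C,p)$ isotrivially specializes to $(C_R,\infty)$, so it is a deformation of $(C_R,\infty)$; by Proposition~\ref{P:beta-A4-stability} it is $A_4$-stable but, being in $\cZ^{-}\ne\cZ^{+}=\cW$, it is $A_4^{non-W}$-stable. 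The first-order deformations of $(C_R,\infty)$ to $A_4^{non-W}$-stable curves that are destabilized for $\beta<4/13$ form a single $\GG_m$-weight space in $\Def^1(C_R,\infty)$ — this is the content of analyzing \eqref{E:def-A4} together with the weights $(-4,-6,-8,-10)$ on $\Def^1(A_4)$ and weight $1$ on $T_\infty$ — and all curves in that weight space are isomorphic (up to scaling) to the $A_4$-curve.

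\textit{Main obstacle.} The routine pieces are the $1$-PS index computations and the invocation of the plane-quartic GIT quotient. The delicate part is Step~2's converse: pinning down $\cZ^{-}$ exactly. This requires matching the GIT-theoretic ``basin of attraction on the $\beta<4/13$ side'' of the fixed locus $\SL(5)\cdot(C_R,\infty)$ with the correct $\GG_m$-weight subspace of $\Def^1(C_R,\infty)$, using the description in \S\ref{S:atom} of the $\GG_m$-action on $\Def^1(A_4)$ with weights $(-4,-6,-8,-10)$ and on $T_\infty$ with weight $1$, and the linearized weight $4-13\beta$ of $(C_R,\infty)$; one must verify that exactly one of these deformation directions (the ``$A_4$-direction'') is destabilized on the $\beta<4/13$ side and that every curve in it degenerates isotrivially to — indeed equals, up to $\SL(5)$ — the $A_4$-curve of Definition~\ref{D:A4}. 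Showing that the relevant weight space is one-dimensional, and that its generic (hence every nonzero) member is projectively the $A_4$-quartic, is where the real work lies.
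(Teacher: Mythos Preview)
Your Step~1 is correct and matches the paper's argument exactly: $\cW\subset\cZ^+$ by Lemma~\ref{L:W-unstable}, and $\cZ^+\subset\cW$ because Part~(C) gives $(4/13+\epsilon)$-stability of all $A_4^{non-W}$-stable curves.

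Step~2 contains a genuine sign error that propagates through your argument. You write that you will show the $A_4$-curve is ``$(4/13-\epsilon)$-stable but $(4/13+\epsilon)$-unstable''; this is backwards. Membership in $\cZ^-=\bbH(4/13)\setminus\bbH(4/13-\epsilon)$ requires the $A_4$-curve to be $(4/13+\epsilon)$-stable (which indeed follows from Part~(C), since the $A_4$-curve is $A_4^{non-W}$-stable) and $(4/13-\epsilon)$-\emph{unstable} (this is Lemma~\ref{L:A_4-unstable}, using the $1$-PS with weights $(-13,-8,-3,7,17)$). Your sentence ``one checks directly it is also $(4/13-\epsilon)$-semistable'' is therefore false, and your phrase ``instability for $\beta>4/13$'' points in the wrong direction. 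You correctly state the definition of $\cZ^-$ later in the converse, so this looks like a slip rather than a conceptual error, but as written the forward direction of Step~2 does not go through.

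For the converse, your strategy is essentially the paper's, but the paper executes it more cleanly via a dimension count rather than an explicit weight-space identification. The paper invokes Luna's \'etale slice theorem to identify $N_{O/\bbH,x}\simeq\Def^1(C_R,\infty)$, which is $5$-dimensional; since $\cZ^+=\cW$ already forces $N_{O/\cZ^+,x}\simeq W$ to be $4$-dimensional, one gets $\dim N_{O/\cZ^-,x}\le 1$ for free, and then the single orbit of the $A_4$-curve (already shown to lie in $\cZ^-$) fills it. Your proposed route---directly isolating the ``$A_4$-direction'' as a single $\GG_m$-weight line in $\Def^1(C_R,\infty)$ and then arguing that every nonzero vector there is projectively the $A_4$-curve---works too, and in fact makes the geometry more transparent (the relevant line is $T_\infty$, with weight~$+1$, corresponding to sliding the marked point off the Weierstrass position), but it requires you to be careful about which sign of the $\GG_m$-weights corresponds to $\cZ^-$ versus $\cZ^+$, and your write-up does not pin this down. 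The paper's dimension argument sidesteps that sign bookkeeping entirely.
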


By construction, the $A_4$-curve is $A_4^{non-W}$-stable, as it has a well-defined
cuspidal plane quartic model given by Equation \eqref{A_4-quartic}. It follows that the $A_4$-curve 
is $\beta$-stable for $\beta\in (4/13, 1/7)$ by the already established Part (C) of Theorem \ref{T:maintheorem}.
We next observe that
the $A_4$-curve isotrivially specializes to $C_R$:
\begin{lemma}
\label{L:A_4-unstable}
The $A_4$-curve isotrivially specializes to $(C_R,\infty)$ and is unstable for $\beta<4/13$.
\end{lemma}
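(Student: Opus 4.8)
The plan is to produce explicit equations for the bi-log-canonical embedding of the $A_4$-curve $(C,p)$ in $\PP^4$ and then to destabilize it by a single one-parameter subgroup of $\SL(5)$ whose flat limit is $(C_R,\infty)$; the same one-parameter subgroup proves both assertions at once.

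First I would write down the bi-log-canonical model of $(C,p)$ starting from its cuspidal quartic model $(yz-x^2)^2-x^3z=0$ (Definition~\ref{D:A4}, Equation~\eqref{A_4-quartic}). By Lemma~\ref{L:A_4-curve} the normalization of this quartic is $[s:t]\mapsto[s^2t^2:st^3+t^4:s^4]$, with the cusp --- whose preimage on $C$ is the marked point $p$ --- at $[0:0:1]$. Since $\omega_C^2(2p)=\psi^{*}\cO_{\PP^2}(2)(-2p)$, its global sections are cut out on $C$ by the linear system of conics through the cusp, so I would take as coordinates on $\PP V^{\vee}$ the five conics
\[
(z_0,z_1,z_2,z_3,z_4):=(xz,\ yz-x^2,\ x^2,\ xy,\ y^2),
\]
and read off the quadratic relations along the parametrization, using $yz=z_1+z_2$ for the first two relations and the quartic equation $z_1^2=x^3z=z_0z_2$ for the last. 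This gives
\[
I_C=\bigl(z_0z_3-z_1z_2-z_2^2,\ z_0z_4-z_1z_3-z_2z_3,\ z_2z_4-z_3^2,\ z_1^2-z_0z_2\bigr),\qquad p=[1:0:0:0:0],
\]
and since these four quadrics are linearly independent and $\dim(I_C)_2=4$, they span $(I_C)_2$.

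Next I would apply the one-parameter subgroup $\rho$ of $\SL(5)$ acting on $z_0,\dots,z_4$ with weights $(-13,-8,-3,7,17)$ --- the inverse of the stabilizer one-parameter subgroup of $(C_R,\infty)$ used in Lemma~\ref{L:W-unstable}. A monomial-by-monomial inspection of leading terms shows that $\lim_{t\to\infty}\rho(t)\cdot I_C$ is spanned by $z_0z_3-z_2^2$, $z_0z_4-z_2z_3$, $z_2z_4-z_3^2$, $z_1^2-z_0z_2$, which is exactly $I_{C_R}$ of Equation~\eqref{E:RamphoidCusp}. Since $\rho$ fixes $p=\infty=[1:0:0:0:0]$, this exhibits the desired isotrivial specialization of the $A_4$-curve to $(C_R,\infty)$.

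Finally I would compute the Hilbert--Mumford index with respect to $\rho$: the four leading degree-two monomials of $(I_C)_2$ have $\rho$-weights $14,4,-6,-16$, so $\mu_{\rho}\bigl((I_C)_2\bigr)=-4$, while $\mu_{\rho}(p)=13$. Hence $\mu_{\rho}(I_C,p)=-4+13\beta$, which is negative precisely for $\beta<4/13$, proving instability (the index vanishes at the wall $\beta=4/13$, consistent with $C_R$ being the closed orbit there). The only step needing genuine care is the first one --- choosing the basis of $\HH^0\bigl(C,\omega_C^2(2p)\bigr)$ so that the defining ideal comes out in this normal form; once that is in place, Steps~2 and~3 are short arithmetic verifications and I anticipate no real obstacle.
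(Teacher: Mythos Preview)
Your proof is correct and essentially identical to the paper's: the paper uses the standard scroll coordinates $(xz,yz,x^2,xy,y^2)$, obtains the extra quadric $(z_1-z_2)^2-z_0z_2$, and then lets the 1-PS $(-13,-8,-3,7,17)$ act on the shifted basis $z_0,\,z_1-z_2,\,z_2,\,z_3,\,z_4$; you simply absorb that change of basis into your initial choice $z_1=yz-x^2$, so your equations and the destabilizing computation match the paper's exactly after relabeling.
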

\begin{proof}
 Under the standard embedding of $S_1$ into $\PP^4$ (cf. Equation \eqref{scroll-embedding}, 
 the $A_4$-curve, given by Equation \eqref{A_4-quartic}), is the intersection of the scroll $S_1$ with the quadric
 $
 (z_1-z_2)^2-z_0z_2=0. 
 $
 The four quadrics cutting out the $A_4$-curve are thus:
 \begin{equation}\label{E:equation-ramphoid-cusp}
z_0z_3 -z_1z_2, z_0z_4-z_1z_3, z_2z_4-z_3^2, (z_1-z_2)^2-z_0z_2,
 \end{equation}
 and the marked point $p$ is $[1:0:0:0:0]$.

 Now consider the 1-PS $\rho$ acting with weights $(-13, -8, -3, 7, 17)$ on 
 $z_0, z_1-z_2, z_2, z_3, z_4$.
 Then the flat limit of the $\rho$-translate of the $A_4$-curve as $t\to 0$ is precisely $C_R$. 
Moreover, this one-parameter subgroup destabilizes the $A_4$-curve for $\beta<4/13$.
 \end{proof}

Next, we analyze GIT stability of the ramphoid cuspidal atom $(C_{R},\infty)$ (cf. \S\ref{S:atom}).
 
 \begin{lemma}\label{L:rampcusp-413} The pair $(C_R, \infty)$ is strictly semistable with a closed orbit with respect to the linearization $\cO_{\Grass}(1)\otimes \cO_{\PP^4}(4/13)$, and unstable with respect to 
 $\cO_{\Grass}(1)\otimes \cO_{\PP^4}(\beta)$ for $\beta\neq 4/13$.
 \end{lemma}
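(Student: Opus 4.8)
The plan is to verify both assertions using the Hilbert--Mumford numerical criterion, exploiting heavily the $\GG_m$-symmetry of $(C_R,\infty)$ established in \S\ref{S:atom}. First I would settle the unstable direction. For $\beta\neq 4/13$, the $1$-PS $\rho$ of $\SL(5)$ acting on $z_0,\dots,z_4$ with weights $(13,8,3,-7,-17)$ lies in $\Stab(I_{C_R},\infty)^{\circ}$, and as computed in the proof of Lemma \ref{L:Gm-action}, the Hilbert--Mumford index of $(I_{C_R},\infty)$ against this $1$-PS under $\cL_{\beta}$ equals $4-13\beta$. Since $\rho$ is in the stabilizer, $\mu_{\rho^{-1}}(I_{C_R},\infty)=-(4-13\beta)=13\beta-4$. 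One of these is strictly negative unless $\beta=4/13$, so $(C_R,\infty)$ is destabilized by $\rho$ or $\rho^{-1}$ for all $\beta\neq 4/13$; this reproves the instability half (already used in Lemma \ref{L:W-unstable}) and handles the second assertion of the lemma.

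It remains to prove that at $\beta=4/13$ the pair $(C_R,\infty)$ is semistable with closed orbit. For semistability I would run the numerical criterion directly: given an arbitrary $1$-PS $\lambda$ of $\SL(5)$, after conjugating we may assume $\lambda$ acts diagonally with ordered weights $a\ge b\ge c\ge d\ge e$, $a+b+c+d+e=0$, on coordinates $z_{\sigma(0)},\dots,z_{\sigma(4)}$ for some permutation $\sigma$ (so the coordinate carrying weight $a$ is some $z_i$). Using the explicit generators in \eqref{E:RamphoidCusp}, for each of the $5!$ assignments one bounds $\mu_{\lambda}((I_{C_R})_2)$ from below by summing the $\lambda$-weights of the four leading degree-$2$ monomials of $I_{C_R}$, and $\mu_{\lambda}(\infty)\ge -w_{0}$ where $w_0$ is the weight on $z_0$ (since $\infty=[1:0:0:0:0]$). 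One then checks $\mu_{\lambda}((I_{C_R})_2)+\tfrac{4}{13}\mu_{\lambda}(\infty)\ge 0$; a state-polytope/convexity argument (cf. Claim \ref{C:opt}) reduces the verification to finitely many extremal $1$-PS, namely those whose weight vectors are, up to scaling and the $\GG_m$-symmetry, the vertices of the relevant polytope. The $\GG_m$-fixity of $(I_{C_R},\infty)$ forces $4-13\beta$ to be the \emph{only} vanishing Hilbert--Mumford index in a neighborhood of $\beta=4/13$, which is exactly what makes $\beta=4/13$ the wall and guarantees that no destabilizing $1$-PS exists once the boundary inequalities are tight.

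For the closed-orbit claim I would argue as follows. If the orbit of $(C_R,\infty)$ were not closed in the semistable locus $\bbH(4/13)$, its closure would contain a polystable point $(C',p')$ with strictly larger stabilizer. By Proposition \ref{P:beta-A4-stability} every point of $\bbH(4/13)$ is the H-point of a genus-$2$ curve with at worst $A_5$-singularities of the listed type, and by Lemma \ref{L:Gm-action} the only $\beta=4/13$-semistable curve with a $\GG_m$-action is $(C_R,\infty)$ itself (the $A_5$-curve occurs only at $\beta=1/2$); any point with a strictly larger connected stabilizer would have to be $(C_R,\infty)$. Hence no such degeneration exists and $\SL(5)\cdot(C_R,\infty)$ is closed in $\bbH(4/13)$. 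Equivalently, one may invoke the VGIT-chamber description from Proposition \ref{P:VGIT}: since $\cZ^{-}\cap\cZ^{+}=\SL(5)\cdot(C_R,\infty)$, the orbit of $(C_R,\infty)$ is the unique minimal orbit in the closure of both $\cW$ and the $A_4$-curve orbit, forcing it to be closed.

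The main obstacle is the purely combinatorial semistability check at $\beta=4/13$: one must show $\mu_{\lambda}((I_{C_R})_2)+\tfrac{4}{13}\mu_{\lambda}(\infty)\ge 0$ for \emph{every} $1$-PS, and because $C_R$ is a maximally degenerate (monomial) curve its state polytope has many facets, so organizing the extremal-$1$-PS computation — and seeing that equality forces $\lambda$ to be a multiple of the symmetry $1$-PS $(13,8,3,-7,-17)$ — is the delicate part. Everything else (the unstable direction and the closed-orbit statement) follows formally from results already proved.
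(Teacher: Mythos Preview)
Your argument for instability when $\beta\neq 4/13$ is correct and matches the paper. Your first closed-orbit argument via Lemma \ref{L:Gm-action} is also fine, though note that your alternative invocation of Proposition \ref{P:VGIT} is circular: that proposition's proof uses the present lemma.

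The genuine gap is in your semistability check at $\beta=4/13$. When you conjugate an arbitrary $1$-PS $\lambda$ to act diagonally, it becomes diagonal in \emph{some} basis $w_0,\dots,w_4$ of $V$, but these $w_i$ are general linear combinations of $z_0,\dots,z_4$, not a permutation of them. Rewriting the generators \eqref{E:RamphoidCusp} in the $w$-coordinates does not yield monomials, so there is no reduction to $5!$ assignments; you face a continuum of cases and Claim \ref{C:opt} alone will not organize them. The paper closes this gap in two different ways. The first is indirect: since the $A_4$-curve is $\beta$-semistable for $\beta\in(4/13,1/2)$ by the already-proved Part~(C) but unstable for $\beta<4/13$ by Lemma \ref{L:A_4-unstable}, general VGIT forces the existence of a strictly semistable closed orbit at $\beta=4/13$, and Lemma \ref{L:Gm-action} leaves $(C_R,\infty)$ as the only candidate. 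The second is the direct computation you intended, but with the missing ingredient supplied: because $\HH^0\bigl(C_R,\omega_{C_R}^2(2\infty)\bigr)$ is a \emph{multiplicity-free} representation of $\GG_m=\Aut(C_R,\infty)^\circ$, the Kempf/Morrison--Swinarski criterion reduces the Hilbert--Mumford test to $1$-PS lying in the diagonal torus (the centralizer of this $\GG_m$ in $\SL(5)$), and \emph{then} the calculation really is the finite combinatorial one you describe. That reduction step is exactly what your proposal is missing.
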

 \begin{proof} 
 We give two proofs of this key lemma.  For the first, note that the $A_4$-curve is semistable for $\beta>4/13$
 (Part (C) of Theorem \ref{T:maintheorem}),
 but is unstable for $\beta<4/13$ (Lemma \ref{L:A_4-unstable}). This implies that for $\beta=4/13$, there must exist a strictly semistable
 point with a closed orbit. By Lemma \ref{L:Gm-action}, the only possibility for such a curve is the ramphoid cuspidal
 atom $(C_R, \infty)$. 
 
 For a different proof, completely independent of our previous GIT analysis, note that
$\HH^0\bigl(C_R,\omega_{C_R}^2(2\infty)\bigr)$ is a multiplicity-free representation of $\Aut^0(C_R, \infty)^{\circ}=\GG_m$
 and this $\GG_m$ acts diagonally on the  basis of $\HH^0\bigl(C_R,\omega_{C_R}^2(2\infty)\bigr)$ given by \eqref{E:bi-log-basis}. 
 Thus it suffices
 to verify stability with respect to the torus acting diagonally on the same basis (see \cite{morrison-swinarski}). 
 This is a straightforward calculation, which we omit. 
 \end{proof}

We are now ready to finish our analysis of the VGIT chambers around $(C_R,\infty)$. 
\begin{proof}[Proof of Proposition \ref{P:VGIT}] 
We have established that the ramphoid cuspidal atom $(C_R,\infty)$ is strictly semistable for $\beta=4/13$.
Let $O$ denote the orbit of $(C_R,\infty)$ in $\bbH$.  Note that $\bbH$ is smooth along this orbit, 
since the singularities of $C_R$ are planar. 
Since $\Aut(C_R,\infty)^{\circ}=\GG_m$, and this $\GG_m$ acts with non-zero weights on the first-order
deformation space of $(C_{R}, \infty)$, 
it follows by general VGIT that $\cZ^{-}\cup \cZ^{+}$ consist of curves that isotrivially specialize to
$(C_R,\infty)$ (cf. \cite[Lemma 1.2 and Lemma 4.3]{thaddeus-VGIT}), and that $\GG_m$ acts with negative (resp., positive) weights on the normal bundle
$N_{O/\cZ^{-}}$ (resp., $N_{O/\cZ^{+}})$ at $(C_R,\infty)$ (see \cite[Proposition 4.6]{thaddeus-VGIT}).

We have seen in Lemma \ref{L:W-unstable} that $\cW\subset \cZ^{+}$, and 
the already established Theorem \ref{T:maintheorem} Part (C) implies that $\cZ^{+} \subset \cW$. We conclude that $\cZ^{+}=\cW$.

By Luna's \'etale slice theorem \cite{luna}, $N_{O/\bbH, x}$ is identified with the first-order deformation 
space of $x=(C_R, \infty)$, and under this identification, 
$N_{O/\cZ^{+},x}$ is identified with the four-dimensional space $W$,
the tangent space to the Weierstrass locus. It follows that $N_{O/\cZ^{-}, x}$ must have dimension 
at most one.  On the other hand, 
by Lemma \ref{L:A_4-unstable}, the $A_4$-curve lies in $\cZ^{-}$, and so we conclude that $\cZ^{-}$
consists precisely of the orbit of the $A_4$-curve. 
\end{proof}

By VGIT, we have $\bbH(4/13+\epsilon)\subset \bbH(4/13)$ and 
by Lemma \ref{L:rampcusp-413}, 
we have that every Weierstrass $A_4$-stable curve has $4/13$-semistable H-point.
Part (B) of Theorem \ref{T:maintheorem} now follows from Proposition \ref{P:beta-A4-stability} and Part (C).                                                                                                                                                       

Since the only polystable curve at $\beta=4/13$ is the ramphoid cuspidal atom
$(C_R, \infty)$, we have that 
\[
\bbH(4/13-\epsilon)=\bbH(4/13) \setminus \cZ^{-}.
\]
Proposition \ref{P:VGIT} now gives Part (A) of Theorem \ref{T:maintheorem}. 

\subsection{Proof of Theorem \ref{T:maintheorem}  Part (D)} Part (D)  follows immediately from Part (C) and Lemma \ref{L:A5-curve}.
\
\subsection{Proof of Theorem \ref{T:maintheorem}  Part (E)}
Let $\cU_{2,1}(A)$ be the stack of pointed genus $2$ curves $(C,p)$ with at worst $A$ singularities
and ample $\omega_C(p)$. 
Then we have the following relations in the Picard group of $\cU_{2,1}(A)$:
\begin{align*}
10\lambda &=\delta_{\irr}+2\delta_{1,1}, \\
\kappa&=12\lambda-\delta_{\irr}-\delta_{1,1}.
\end{align*}
This follows from the same relations on $\Mg{2,1}$ (see \cite[Proposition~1.9]{arbarello1998calculating} and 
\cite{mumford1983towards}) and the fact that the complement of $\Mg{2,1}$ in $\cU_{2,1}(A)$ 
is of codimension at least $2$.

Restricting to the open substack of curves with no elliptic tails, we obtain 
$\delta=10\lambda$ and $\kappa=2\lambda$. Since for $\beta\in (1/7, 1/2]$, all 
$\beta$-stable pairs in $\bbH(\beta)$ are H-points of $A_4$ or $A_5$-stable curves, and the non-nodal locus in $\bbH(\beta)$
has codimension at least two, a standard computation, as in \cite{mumford-stability}, shows that the polarization $\cL_{\beta}$
(cf. Equation \eqref{beta-linearization})
descends to the following ample line bundle on the GIT quotient $\bbH(\beta)\gitq \SL(5)$:

\begin{equation}\label{E:GIT-polarization}
L_{\beta}:=\frac{1}{5}(-\lambda+8\psi)-\frac{\beta}{5}(3\lambda+\psi).
\end{equation}
Clearly, $L_{4/13}$ is a positive rational multiple 
of 
\[
(13\lambda-2\delta+\psi)+\frac{2}{3}\delta+\frac{1}{3}\psi.
\]
Recalling that also $K_{\overline{\cM}_{2,1}}=13\lambda-2\delta+\psi$ (see \cite[Theorem 2.6]{logan-kodaira}),
a standard discrepancy computation now shows that the isomorphism in Equation \eqref{lc-2/3} holds. 
The fact that \[
\bbH(4/13-\epsilon)\gitq \SL(5) \to  \bbH(4/13)\gitq \SL(5)
\]
is the contraction of the Weierstrass divisor follows from Proposition \ref{P:VGIT}.

The remaining identifications of $\bbH(\beta)\gitq \SL(5)$ with the log canonical models of $\Mg{2,1}$ follow by a similar computation. For example, since the Weierstrass divisor on $\Mg{2,1}$ has
class $3\psi-\lambda-\delta$ (see \cite[Proposition 6.70]{HM}) 
and it is contracted in $\bbH(4/13, 1/2)\gitq \SL(5)$, 
we have a further relation $\lambda=3\psi$ in the Picard group of $\bbH(4/13, 1/2)\gitq \SL(5)$.
It then follows that for $\beta=1/2$, we have
\begin{equation*}
L_{1/2}=\frac{3}{2}\psi-\frac{1}{2}\lambda=0.
\end{equation*}
At the same time, $L_{1/2}$ is a positive rational multiple 
of 
\[
(13\lambda-2\delta+\psi)+\frac{19}{29}\delta+\frac{10}{29}\psi,
\]
and so we obtain the isomorphism
\[
\bbH(1/2)\gitq \SL(5) \simeq \M_{2,1}(19/29) \simeq \text{\{point\}}.
\]

\bibliographystyle{amsalpha}
\bibliography{meta-bib,biblio}
\end{document}